\documentclass{amsart}
\usepackage[english]{babel}
\usepackage{amsmath,amsfonts,amssymb,amsthm}
\usepackage[all]{xy}
\usepackage[mathcal]{eucal}
\usepackage{mathrsfs}
\usepackage{caption}
\usepackage{hyperref}
\newtheorem{lm}{Lemma}[section]
\newtheorem{prop}[lm]{Proposition}
\newtheorem{thm}[lm]{Theorem}
\newtheorem{cor}[lm]{Corollary}
\newtheorem{df}[lm]{Definition}
\theoremstyle{remark}
\newtheorem{claim}[lm]{Claim}
\theoremstyle{definition}
\newtheorem{rk}[lm]{Remark}
\newtheorem{ex}[lm]{Example}

\renewcommand{\labelenumi}{(\alph{enumi})}

\newcommand{\C}{\mathbb{C}}
\newcommand{\R}{\mathbb{R}}
\newcommand{\val}{\mathrm{val}}
\newcommand{\Idlsh}{\mathscr{I}}
\newcommand{\regsh}{\mathcal{O}}
\newcommand{\dual}{{}^\vee}
\newcommand{\Q}{\mathbb{Q}}
\newcommand{\Z}{\mathbb{Z}}
\renewcommand{\div}{\mathrm{div}}
\newcommand{\Lscr}{\mathscr{L}}
\newcommand{\Fscr}{\mathscr{F}}
\newcommand{\Kscr}{\mathscr{K}}
\newcommand{\ord}{\mathrm{ord}}
\newcommand{\Pspace}{\boldsymbol{\mathrm{P}}}

\newcommand{\Aspace}{\boldsymbol{\mathrm{A}}}

\newcommand{\discrep}{\mathrm{discrep}}
\newcommand{\desc}[1]{\textnormal{#1}}
\newcommand{\Pic}{\mathrm{Pic}}
\newcommand{\e}{\varepsilon}
\newcommand{\Rscr}{\mathscr{R}}
\newcommand{\Escr}{\mathscr{E}}
\newcommand{\dfrak}{\mathfrak{d}}
\newcommand{\exc}{\mathrm{exc}}
\newcommand{\supp}{\mathrm{supp}}
\newcommand{\Jscr}{\mathscr{J}}
\newcommand{\ltp}{lt${}^+$ }
\newcommand{\relSpec}{\boldsymbol{\mathrm{Spec}}}

\setlength\arraycolsep{1pt}
\title{Some properties and examples of log terminal${}^+$ singularities}
\author{alberto chiecchio}
\date{}
\begin{document}
\begin{abstract} In \cite{dFH}, de Fernex and Hacon started the study of singularities on non-$\Q$-Gorenstein varieties using pullbacks of Weil divisors. In \cite{ltsings}, the author of this paper and Urbinati introduce a new class of singularities, called \emph{log terminal${}^+$}, or simply \emph{\ltp}, which they prove is rather well behaved. In this paper we will continue the study of \ltp singularities, and we will show that they can be detected by a multiplier ideal, that they satisfy a Bertini type result, inversion of adjunction and small deformation invariance, and that they are naturally related to rational singularities. Finally, we will provide a list of examples (all of them with \ltp singularities) of the pathologies that can occur in the study of non-$\Q$-Gorenstein singularities.
\end{abstract}
\maketitle

\tableofcontents

\section{Introduction}
In \cite{dFH}, de Fernex and Hacon started the study of singularities on non-$\Q$-Gorenstein varieties using pullbacks of Weil divisors. In this work they introduce a notion of log canonical, log terminal, canonical and terminal singularities in this context. In \cite{stefano}, the author gives some first examples of the possible pathologies that can arise in this context, for example a variety with canonical but not log terminal singularities (\cite{stefano}, 4.1, example \ref{ex:lt+,notCM} below). In \cite{stefano2}, the author proceed with his study of non-$\Q$-Gorenstein canonical singularities, and their relation with divisorial models. In \cite{ltsings}, the author of this paper and Urbinati introduce a new class of singularities, called \emph{log terminal${}^+$}, or simply \emph{\ltp}, which they prove is rather well behaved. For example, the canonical algebra $\Rscr(X,K_X)$ is finitely generated in this case (\cite{ltsings}, 5.10, theorem \ref{thm:fgcr}).

In this paper we will continue the study of \ltp singularities, and we will show that they there exists a multiplier ideal detecting them (corollary \ref{cor:multideal}), that they satisfy a Bertini type result (theorem \ref{thm:bertini}), inversion of adjunction (theorem \ref{thm:invofadjunction}) and small deformation invariance (corollary \ref{cor:definv}), and that they are naturally related to rational singularities (theorem \ref{thm:lt++CM=>rtl}). Then we will focus on examples and their meaning. We will provide new examples of possible pathologies for the study of singularities in the non-$\Q$-Gorenstein case. Interestingly enough, all the examples that we provide are of singularities which are \ltp. The main point is that these pathologies occur even for singularities that are very well behaved. Moreover, as it is argued in \cite{ltsings}, \ltp singularities seems to be the largest class of singularities, at the moment, where it is possible to run a ``non-$\Q$-Gorenstein MMP without boundaries''. Therefore, the examples presented here are to be considered as a cautioning collection for everyone interested in the project.

\vspace{2ex}

We will recall the definition of the main objects, and some recent results in section 2. We will also notice that, under some restrictive conditions, the restriction of a pullback of a Weil divisor to a fiber is always numerically antieffective (corollary \ref{cor:pullback and restriction}). The general situation seems to be more complicated, and it is briefly discussed in remark \ref{rk:positivity on fibers}.

In section 3 we will prove several properties of \ltp singularities. We will show the existence of a multiplier ideal $\Jscr^+(X)$ (definition \ref{df:multideal} and lemma \ref{lm:multideal}) which detects \ltp singularities.
\begin{thm}[Corollary \ref{cor:multideal}] Let $X$ be a normal variety. Then $X$ has \ltp singularities if and only if $\Jscr^+(X)=\regsh_X$.
\end{thm}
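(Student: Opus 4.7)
The plan is to unwind Definition \ref{df:multideal} and Lemma \ref{lm:multideal}, compute $\Jscr^+(X)$ on a sufficiently high log resolution $f \colon Y \to X$, and match the equality $\Jscr^+(X) = \regsh_X$ with the characterization of \ltp singularities recalled in Section 2. The inclusion $\Jscr^+(X) \subseteq \regsh_X$ is automatic from the construction, so the substantive content is that the positivity condition on discrepancies encoded in $\Jscr^+(X)$ coincides with the positivity condition encoded in the definition of \ltp.

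First I would fix a log resolution $f \colon Y \to X$ high enough that $\Jscr^+(X)$ is computed by the formula from Lemma \ref{lm:multideal} --- presumably a pushforward $f_* \regsh_Y \bigl( \lceil K_Y - f^\natural K_X \rceil \bigr)$, or a (co)limit of such pushforwards as one varies the auxiliary boundary data implicit in the \ltp-version of the de Fernex--Hacon pullback. Stalk-by-stalk, $\Jscr^+(X)_x = \regsh_{X,x}$ then translates into the round-up being effective in a neighborhood of $f^{-1}(x)$, equivalently into every prime divisor $E$ on $Y$ lying over $x$ having discrepancy strictly greater than $-1$ with respect to the relevant pullback of $K_X$. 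For $(\Leftarrow)$, the \ltp hypothesis yields this positivity directly, so the round-up is effective and the equality follows; for $(\Rightarrow)$, the converse implication produces the positivity of discrepancies on one, and hence, by the monotonicity of $f^\natural$ under further blowups established in \cite{ltsings}, on all sufficiently high log resolutions, which is precisely the \ltp condition.

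The delicate step will be to reconcile the limit/supremum built into the definition of $\Jscr^+(X)$ with that built into the definition of \ltp: both schemes approximate the non-$\Q$-Cartier divisor $K_X$ by $\Q$-Cartier data, but the former is designed to build a coherent ideal while the latter is designed to detect positivity of discrepancies, and the theorem requires these two approximation schemes to agree at the level of the round-up appearing on $Y$. I expect this agreement to be the main content of the argument, and to follow from a cofinality comparison between log resolutions and the auxiliary $\Q$-Cartier approximations used in \cite{ltsings}; once it is in place, the rest is the standard discrepancy reading of round-ups on a log resolution and the two implications assemble as above.
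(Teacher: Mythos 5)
Your overall skeleton (triviality of the ideal $\Leftrightarrow$ effectivity of a round-up on one log resolution $\Leftrightarrow$ all discrepancies${}^+$ on that resolution exceed $-1$, then upgrade to all divisors over $X$) is the right one and is essentially how the corollary follows in the paper. But you never pin down the object being pushed forward, and of the two candidates you float, neither is the correct one, and the second is provably fatal. The ideal is $\Jscr^+(X)=f_*\regsh_Y\big(\lceil K_Y+f^*(-K_X)\rceil\big)=f_*\regsh_Y\big(\lceil K^+_{Y/X}\rceil\big)$, with $f^*(-K_X)$ the limit pullback, on \emph{any} log resolution (that is Lemma \ref{lm:multideal}, which you may simply quote). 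It is not $f_*\regsh_Y\big(\lceil K_Y-f^\natural K_X\rceil\big)$: that divisor is $K^-_{1,Y/X}\leq K^+_{Y/X}$, so the corresponding ideal being trivial is strictly stronger than \ltp and the ``only if'' direction would fail. And it is not ``a (co)limit of such pushforwards as one varies the auxiliary boundary data'': Remark \ref{rk:multideal} and Example \ref{ex:Km>-1,K=-1} exhibit a cone with $K^+_{m,Y/X}>-1$ for every $m$ (so each approximating multiplier ideal is $\regsh_X$) while $K^+_{Y/X}=-1$ and $X$ is not \ltp, so the limiting-ideal construction does not detect \ltp. For the same reason, the ``delicate step'' you anticipate --- a cofinality comparison between log resolutions and $\Q$-Cartier approximations --- is not merely unnecessary, it is the step that cannot be made to work; with the actual definition there is nothing to reconcile, because the \ltp condition is literally $\ord_F(K^+_{Y/X})>-1$.

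With the correct divisor the argument closes as you intend, but the two small points you wave at should be said: since $f_*K_Y=K_X$, the divisor $\lceil K^+_{Y/X}\rceil$ is $f$-exceptional, so $\Jscr^+(X)\subseteq\regsh_X$ and, locally over $U\subseteq X$, the function $1$ lies in $\Jscr^+(X)(U)$ iff $\lceil K^+_{Y/X}\rceil\geq0$ on $f^{-1}(U)$, i.e.\ iff $a^+(E,X)>-1$ for every exceptional prime $E$ of $Y$ over $U$. For the converse upgrade from $Y$ to all divisors over $X$, the correct citation is Lemma \ref{lm:ltwithoneresolution} together with the observation that $Y$ carries only finitely many exceptional primes, so $a^+(E,X)>-1$ on $Y$ gives a uniform $\e$; ``monotonicity of $f^\natural$ under further blowups'' is not by itself the statement you need.
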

Moreover, we will prove a Bertini type theorem.
\begin{thm}[Theorem \ref{thm:bertini}] Let $X$ be a normal variety having \ltp singularities. Then the generic hyperplane section of $X$ has \ltp singularities.
\end{thm}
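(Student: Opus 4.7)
The plan is to combine the multiplier ideal characterization of Corollary \ref{cor:multideal} with a classical Bertini-for-resolutions argument. Since $X$ has \ltp singularities if and only if $\Jscr^+(X) = \regsh_X$, it suffices to show that for a generic hyperplane section $H$ one has $\Jscr^+(H) = \regsh_H$. Concretely, I would exhibit a single log resolution that simultaneously computes $\Jscr^+(X)$ and, after restriction to the strict transform of $H$, computes $\Jscr^+(H)$.

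First I would fix a log resolution $f : Y \to X$ realizing $\Jscr^+(X)$, which Lemma \ref{lm:multideal} should allow, together with the $m$-compatible boundary $\Delta$ coming from the definition of the \ltp condition. For a generic hyperplane section $H \subset X$ the standard Bertini arguments yield: $\tilde H \subset Y$ is smooth, $f|_{\tilde H} : \tilde H \to H$ is a log resolution, and $\tilde H$ meets every exceptional component of $f$ transversally; moreover, since $H$ is Cartier and general, $f^* H = \tilde H$ scheme-theoretically. Adjunction on both sides then yields
\[
K_{\tilde H} - (f|_{\tilde H})^*(K_H+\Delta|_H) \;=\; \bigl(K_Y - f^*(K_X+\Delta)\bigr)\big|_{\tilde H},
\]
so every exceptional discrepancy on $Y$ restricts to a discrepancy on $\tilde H$ over $H$.

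The \ltp condition on $X$ means that all exceptional coefficients of $K_Y - f^*(K_X+\Delta)$ are $>-1$. By generality of $H$, no new exceptional components over $H$ appear on $\tilde H$, and the transversality of $\tilde H$ with the exceptional locus guarantees that the strict inequalities survive restriction. Hence $\Delta|_H$ witnesses the \ltp condition for $H$, which by Corollary \ref{cor:multideal} gives $\Jscr^+(H) = \regsh_H$, as required.

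The hard part will be verifying that the de Fernex--Hacon pullback commutes with restriction to a generic hyperplane section. The pullback is defined as a limit over $m$-compatible boundaries, so one has to check both that $m$-compatible boundaries on $X$ restrict to $m$-compatible boundaries on $H$, and that no new obstructions appear on $H$. Corollary \ref{cor:pullback and restriction} handles the positivity side by ensuring that the restricted pullbacks remain numerically antieffective on fibers (here, on the generic hyperplane section); the remaining work is a careful Bertini-style argument applied to the linear systems defining the $m$-compatible boundaries, so that one can take a common log resolution along the whole limiting procedure.
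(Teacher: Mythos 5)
Your overall strategy --- restrict boundaries to a general hyperplane section, apply the classical Bertini statement for discrepancies of pairs, and track the limit over $m$ --- is in essence the route the paper takes, but the step you defer as ``the hard part'' is precisely the mathematical content of the proof, and your proposal does not supply it. What is needed is that for each $m\geq 2$ one can choose a weak $m$-compatible $(-K_X)$-boundary $\Delta_m$ (so that $K^+_{m,Y/X}=K^{-\Delta_m}_{Y/X}$ on the fixed log resolution $f:Y\to X$) whose restriction $\Delta_m\big|_S$ is again a weak $m$-compatible $(-K_S)$-boundary computing $K^+_{m,T/S}$ on the restricted log resolution $g=f\big|_T$; this is exactly Lemma \ref{lm:relativecompatibleboundary}, which you neither cite nor prove, and your closing remark that it should follow from ``a careful Bertini-style argument on the linear systems'' is where the actual work lies. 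The auxiliary results you do invoke cannot replace it: Corollary \ref{cor:pullback and restriction} concerns the restriction of $f^*(D)$ to an exceptional component that is a fiber, not to the strict transform of a general hyperplane section, and numerical antieffectivity is irrelevant to the discrepancy estimate you need; and the detour through $\Jscr^+$ (Lemma \ref{lm:multideal}, Corollary \ref{cor:multideal}) buys nothing, since once every coefficient of $K^+_{T/S}$ is known to exceed $-1$ you conclude directly by Lemma \ref{lm:ltwithoneresolution}.

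A second, related slip: you treat the \ltp hypothesis as if it were witnessed by a single boundary $\Delta$ with all exceptional coefficients of $K_Y-f^*(K_X+\Delta)$ greater than $-1$. That is not the definition: \ltp is the condition $\ord_F(K^+_{Y/X})>-1$, where $K^+_{Y/X}=K_Y+f^*(-K_X)$ is an infimum over all weak $m$-compatible $(-K_X)$-boundaries (Corollary \ref{cor:lt3.9}), and the relevant pairs are $(X,-\Delta_m)$ with $\Delta_m\geq0$, i.e.\ anti-effective boundaries, so the sign in your adjunction display is also off. Consequently the argument must produce, for every $m$, the inequality $\discrep(S,-\Delta_{m,S})\geq\discrep(X,-\Delta_m)$ (this is the appeal to \cite{singsofpairs}, 7.7) and then pass to the infimum over $m$ with a bound that is uniform in $m$; the uniformity comes from the \ltp hypothesis read on the finitely many $f$-exceptional divisors of the fixed log resolution $Y$, and it is essential --- example \ref{ex:Km>-1,K=-1} shows that knowing each $m$-level discrepancy is $>-1$ does not by itself imply that the limit is $>-1$. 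With Lemma \ref{lm:relativecompatibleboundary} cited (or proved) and these two points made explicit, your sketch becomes the paper's proof; as written, it has a genuine gap at its central step.
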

We will also prove inverse of adjunction.
\begin{thm}[Theorem \ref{thm:invofadjunction}] Let $X$ be a normal variety, and let $S$ be an effective (normal and reduced) Cartier divisor in $X$ having \ltp singularities. Then $X$ has \ltp singularities in a neighborhood of $S$.
\end{thm}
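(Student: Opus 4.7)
The plan is to pick a log resolution $\pi : Y \to X$ compatible with $S$ and to exploit the fact that $S$ is Cartier: classical adjunction $(K_X + S)|_S = K_S$ holds as Weil divisors on $S$, and $\pi^{*}S$ decomposes as $T + F$, where $T = \pi^{-1}_{*}S$ is the (smooth) strict transform and $F$ is $\pi$-exceptional and effective. Arrange also that $\pi|_T : T \to S$ is a log resolution adapted to the data defining the \ltp condition on $S$.

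Since $S$ is Cartier, for every $m \geq 1$ the de Fernex--Hacon pullback respects the decomposition $m(K_X + S) = mK_X + mS$, so $\pi^{*}\bigl(m(K_X + S)\bigr) = \pi^{*}(mK_X) + m\,\pi^{*}S$. Restricting to $T$ via adjunction $(K_Y + T)|_T = K_T$, dividing by $m$, and passing to the limit that defines \ltp-discrepancies yields, for every prime divisor $E$ on $Y$ whose intersection with $T$ is a prime divisor on $T$, an inequality of the form
\[
a^{+}(E;\, X) \;\geq\; a^{+}(E \cap T;\, S) \,+\, \ord_E(F) \;\geq\; a^{+}(E \cap T;\, S).
\]
Since $S$ has \ltp singularities, the right-hand side is $>-1$, and hence $a^{+}(E;\, X) > -1$. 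Divisors $E$ disjoint from $\pi^{-1}(S)$ lie over $X \setminus S$ and are irrelevant for a statement local near $S$; running the same argument on models extracting any prescribed divisor $E$ over $X$ whose center meets $S$ covers the full \ltp discrepancy condition for $X$ near $S$.

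The ``$+$'' component of the definition transfers from $S$ to $X$ along the restriction in the same spirit, with corollary \ref{cor:pullback and restriction} providing control on pullbacks of Weil divisors restricted in the codimension-one direction cut out by $S$. Alternatively, one may phrase the entire argument through the multiplier ideal $\Jscr^{+}$ of corollary \ref{cor:multideal}: the analogous adjunction computation gives $\Jscr^{+}(X) + \Idlsh_S = \regsh_X$ along $S$, and since $\Idlsh_S$ is locally principal the equality propagates to a neighborhood of $S$ by a Nakayama-style argument applied to the coherent quotient $\regsh_X/\Jscr^{+}(X)$. The main obstacle throughout is justifying that pullback, restriction to $T$, and the limiting process defining the \ltp-discrepancies commute in this non-$\Q$-Gorenstein setting; this is precisely where the Cartier hypothesis on $S$ and corollary \ref{cor:pullback and restriction} are essential.
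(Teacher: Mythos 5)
Your restriction inequality is where the argument breaks down, and the failure is exactly the one that makes inversion of adjunction a nontrivial theorem. Restricting an adjunction-type identity $K_Y+T=\pi^*(K_X+S-\Delta)+\sum a_iE_i$ to the strict transform $T$ only gives you information about those exceptional divisors $E_i$ that actually meet $T$: for such $E_i$ one can read off the discrepancy of $E_i\cap T$ over $S$ from $a_i$. It gives you nothing about divisors $E$ over $X$ whose center meets $S$ but which are disjoint from $T$ on the resolution (note these are \emph{not} the divisors ``disjoint from $\pi^{-1}(S)$'' that you dismiss as lying over $X\setminus S$ --- a divisor can dominate a point of $S$ while missing $T$ entirely). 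A priori such a divisor could have $a^+\leq-1$ while being completely invisible from $S$, so no amount of restricting to $T$ and passing to the limit can yield the lower bound $a^+(E;X)>-1$ for all $E$ near $S$. Ruling out these divisors is precisely the content of the Koll\'ar--Shokurov connectedness theorem, which is the engine of the paper's proof: the paper proves a version of \cite{komo}, 5.50 for anti-effective boundaries (lemma \ref{lm:komo5.50}, resting on theorem \ref{thm:connectedthm}), showing that the locus of log discrepancy $\leq-1$ together with $T$ is connected over a neighborhood of each point of $S$, hence must be empty near $S$ once the restricted pair is good. Your proposal never invokes any connectedness or vanishing input, so the central idea is missing; the sketched multiplier-ideal alternative ($\Jscr^+(X)+\Idlsh_S=\regsh_X$ plus Nakayama) has the same problem, since the needed restriction/adjunction statement for $\Jscr^+$ is exactly what would have to be proved and would again require a vanishing-type ingredient.

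There is a second, more technical gap: in this non-$\Q$-Gorenstein setting you cannot simply assert that pullback, restriction to $T$, and the limit over $m$ commute, and $\pi^*(-K_X)\big|_T$ has no a priori relation to $g^*(-K_S)$ (corollary \ref{cor:pullback and restriction} only gives numerical antieffectivity on a fiber, which is not what is needed). The paper circumvents this by lemma \ref{lm:relativecompatibleboundary}: for each $m$ it produces a weak $m$-compatible $(-K_X)$-boundary $\Delta_m$ whose restriction $\Delta_m\big|_S$ is a weak $m$-compatible $(-K_S)$-boundary, so that $K^+_{m,Y/X}=K^{-\Delta_m}_{Y/X}$ and $K^+_{m,T/S}=K^{-\Delta_{m,S}}_{T/S}$ are discrepancies of honest $\Q$-Cartier pairs; the connectedness lemma is applied pair by pair (with the Cartier hypothesis on $S$ used to pass from $(X,-\Delta_m+S)$ to $(X,-\Delta_m)$ via $\pi^*S-T\geq0$), and only then does one take the infimum over $m$ to recover $K^+_{Y/X}$ and conclude via lemma \ref{lm:ltwithoneresolution}. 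Your proposal would need both of these ingredients --- the compatible-boundary reduction and the connectedness theorem --- to become a proof.
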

As immediate consequence of these two results we obtain the invariance under small deformations.
\begin{cor}[Corollary \ref{cor:definv}] Let $f:X\rightarrow T$ be a proper flat family of varieties over a smooth curve $T$ and $t\in T$ a closed point. If the fiber $X_t$ has \ltp singularities, then so do the other fibers near $t$.
\end{cor}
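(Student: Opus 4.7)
The plan is to combine inversion of adjunction (theorem~\ref{thm:invofadjunction}) with the Bertini-type theorem (theorem~\ref{thm:bertini}), using that on the one-dimensional base $T$, a dense open subset produced by Bertini is automatically cofinite. First, since $X_t$ has \ltp singularities it is normal, and by flatness of $f$ over the smooth curve $T$, standard deformation theory implies that $X$ is normal in a neighborhood of $X_t$; I would replace $X$ by this neighborhood. The fiber $X_t = f^{-1}(t)$ is an effective Cartier divisor in $X$, cut out by the pullback of a local uniformizer of $T$ at $t$, so I would apply inversion of adjunction to $X_t \subset X$ to conclude that $X$ has \ltp singularities in an open neighborhood of $X_t$. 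By properness of $f$, this neighborhood can be taken of the form $f^{-1}(U)$ for some open $U \subset T$ containing $t$, so $f^{-1}(U)$ is \ltp.

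Next, I would observe that the fibers $\{X_s\}_{s\in U}$ form a base-point free pencil on $f^{-1}(U)$, since distinct fibers are disjoint. Applying theorem~\ref{thm:bertini} to $f^{-1}(U)$ with this pencil yields an open dense subset $V \subseteq U$ such that $X_s$ is \ltp for every $s \in V$. Because $U$ is open in the smooth curve $T$, its complement $U \setminus V$ is a finite set of closed points, and hence the ``bad locus'' $B := \{s \in U : X_s \text{ is not \ltp}\} \subseteq U \setminus V$ is also finite. By hypothesis $t \notin B$, so $U \setminus B$ is an open neighborhood of $t$ in $T$ on which every fiber has \ltp singularities.

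The only point that requires some care is verifying that the pencil of fibers actually satisfies the hypotheses of theorem~\ref{thm:bertini}, i.e., that each $X_s$ may genuinely be viewed as a ``generic hyperplane section'' of $f^{-1}(U)$ in the sense of that theorem. After shrinking $T$ so that $\regsh_T(t)$ is trivial on $U$, the fibers are the members of a base-point free linear system on $f^{-1}(U)$, which is the standard setting for Bertini. Beyond this minor technicality, the argument is a direct concatenation of the two previous theorems together with the elementary observation that a cofinite subset of a smooth curve is a neighborhood of any of its points.
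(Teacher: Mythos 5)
Your argument is correct and follows essentially the same route as the paper: apply inversion of adjunction (theorem \ref{thm:invofadjunction}) to the Cartier divisor $X_t\subset X$, use properness to shrink to $f^{-1}(U)$ where $X$ is \ltp, and then invoke the Bertini-type theorem \ref{thm:bertini} for the (general) fibers over the one-dimensional base. The extra care you take with normality of $X$ near $X_t$ and with viewing the fibers as members of a base-point-free system only makes explicit points the paper leaves implicit.
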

Finally we will prove the following relation between \ltp singularities and rational singularities.
\begin{thm}[Theorem \ref{thm:lt++CM=>rtl}] If $X$ has Cohen-Macaulay and \ltp singularities, then it has rational singularities.
\end{thm}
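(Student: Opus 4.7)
The plan is to verify Kempf's criterion: a normal Cohen-Macaulay variety $X$ has rational singularities if and only if, for some (equivalently every) resolution $\pi\colon Y\to X$, the natural trace $\pi_*\omega_Y\to\omega_X$ is an isomorphism. Since $X$ is Cohen-Macaulay by hypothesis, the whole content reduces to producing a single resolution $\pi$ with $\pi_*\omega_Y=\omega_X$; the reverse inclusion $\pi_*\omega_Y\subseteq\omega_X$ is automatic from normality.

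My strategy is to reduce to the classical $\Q$-Gorenstein klt situation via a small $\Q$-factorial modification. By Theorem \ref{thm:fgcr} the canonical ring $\Rscr(X,K_X)$ is finitely generated, and the machinery developed in \cite{ltsings} associates to any \ltp variety a small birational morphism $\mu\colon\tilde X\to X$ (an isomorphism in codimension one) with $\tilde X$ normal, $K_{\tilde X}$ $\Q$-Cartier, and $\tilde X$ klt in the classical sense --- indeed, producing such a model is one of the principal motivations for introducing the class \ltp. Elkik's theorem then yields that $\tilde X$ has rational singularities, and Grauert--Riemenschneider vanishing is available on it. Choosing a log resolution $\pi\colon Y\to X$ factoring as $\pi=\mu\circ\tau$ with $\tau\colon Y\to\tilde X$, Grauert--Riemenschneider gives $\tau_*\omega_Y=\omega_{\tilde X}$. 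Since $\mu$ is an isomorphism in codimension one and both $\omega_X$ and $\omega_{\tilde X}$ are reflexive rank-one sheaves on their normal ambient schemes, $\mu_*\omega_{\tilde X}=\omega_X$. Combining, $\pi_*\omega_Y=\mu_*\tau_*\omega_Y=\omega_X$, and Kempf's criterion concludes.

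The main obstacle is the very first step: extracting the small $\Q$-Cartier classically-klt modification $\tilde X$ from the bare \ltp hypothesis. If \ltp is defined or characterized in \cite{ltsings} by the existence of such a model, this step is essentially definitional; otherwise it must be built out of the characterization $\Jscr^+(X)=\regsh_X$ (Corollary \ref{cor:multideal}) together with finite generation of $\Rscr(X,K_X)$, i.e., a ``non-$\Q$-Gorenstein MMP without boundary'' of the kind alluded to in the introduction. A secondary subtlety is that one needs $\tilde X$ to be klt in the classical, $\Q$-Gorenstein sense --- not merely in the de Fernex--Hacon sense --- so that Elkik and Grauert--Riemenschneider apply directly; assuming the \ltp formalism delivers this, the proof is complete.
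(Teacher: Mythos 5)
Your route is genuinely different from the paper's, and the difference matters. The paper's argument is short and elementary: for any resolution $f:Y\rightarrow X$, the \ltp hypothesis gives $\ord_E(K^+_{Y/X})>-1$ for every exceptional prime divisor $E$, and since $K^+_{Y/X}\leq K_Y+f^\natural(-K_X)$ and the latter is an integral divisor pushing forward to $0$, one concludes $K_Y+f^\natural(-K_X)\geq0$, which is equivalent to the sheaf inclusion $\omega_X\cdot\regsh_Y\subseteq\omega_Y$; pushing forward and using that $\omega_X\subseteq f_*(\omega_X\cdot\regsh_Y)$ for a torsion-free subsheaf of $\Kscr_X$ yields $f_*\omega_Y=\omega_X$, and Kempf's criterion together with the Cohen--Macaulay hypothesis finishes. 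No MMP, no Elkik, no Grauert--Riemenschneider.

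By contrast, the entire weight of your proposal rests on the step you yourself flag: producing a small morphism $\mu:\tilde X\rightarrow X$ with $K_{\tilde X}$ $\Q$-Cartier and $\tilde X$ klt in the classical sense. That statement is not among the results quoted in this paper --- what is quoted (Theorem \ref{thm:fgcr}) is only finite generation of $\Rscr(X,K_X)$ --- and \ltp is emphatically not equivalent to klt in the de Fernex--Hacon sense (example \ref{ex:lt+,notCM}), so the step is not definitional; as written this is a genuine gap. It is, however, closable: take $\tilde X=\relProj_X\Rscr(X,K_X)$, which is small over $X$ with $K_{\tilde X}$ $\Q$-Cartier and $\mu$-ample; for sufficiently divisible $m$ one has $\regsh_X(mK_X)\cdot\regsh_{\tilde X}=\regsh_{\tilde X}(mK_{\tilde X})$ invertible, so by lemma \ref{lm:(fg)*-g*f*}, on a common resolution $g:Y\rightarrow\tilde X$ with $f=\mu\circ g$, one gets $f^\natural(-mK_X)=g^*(-mK_{\tilde X})$, hence $K^+_{Y/X}=K_{Y/\tilde X}$, and $\tilde X$ is klt exactly because $X$ is \ltp. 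With that supplied, Elkik plus Kempf on $\tilde X$ (note that $g_*\omega_Y=\omega_{\tilde X}$ comes from rationality of $\tilde X$ via Kempf rather than from Grauert--Riemenschneider as such) and smallness of $\mu$ give $f_*\omega_Y=\mu_*g_*\omega_Y=\mu_*\omega_{\tilde X}=\omega_X$, completing your argument. The trade-off is clear: your reduction buys access to the classical theory but requires finite generation of the canonical algebra (a BCHM-level input) plus the identification above, whereas the paper's proof needs nothing beyond the integrality of $K_Y+f^\natural(-K_X)$ and a push-forward.
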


\vspace{2ex} 

Sections 4 and 5 will be mainly justified by the following observation. If $X$ is a $\Q$-Gorenstein variety, $f:Y\rightarrow X$ is a resolution, $E$ is one of the components of the exceptional divisor which is mapped to a point, there is a standard technique to compute the discrepancy (or the log discrepancy in the more general setting). For example, in the case that $X$ is a cone over some smooth polarized variety $(V,L)$, and $f:Y\rightarrow X$ is the blow-up of the vertex, we can write $K_Y=f^*(K_X)+A$ (as $\Q$-divisors), where $A=aE$ is the discrepancy. Thus, $K_Y+E=f^*(K_X)+aE+E$. Restricting this identity to $E$, $(K_Y+E)\big|_E\sim K_E$ and $f^*(K_X)\big|_E\sim_\Q0$ ($E$ is a fiber), and we find $K_E\sim_\Q(a+1)E\big|_E\sim_\Q-(a+1)L$. We can use this to determine $a$, and thus $A=aE$. This procedure can fail in several steps if we use the pullback of \cite{dFH}.

Let $D$ be a Weil divisor on a normal variety $X$, and $F$ be the fiber of a birational morphism $f:Y\rightarrow X$. Recall that the pullback is defined as a limit $f^*(D)=\inf f^\natural(mD)/m$. If $D$ is not $\Q$-Cartier, and $F$ is a fiber of a (birational) morphism $f:Y\rightarrow X$, in general we will not be able to restrict $f^*(D)$ to $F$. Even if we can (for example if $F$ is a normal reduced Cartier divisor) there is no reason to suspect that $f^*(D)\big|_F\sim_\Q0$.
{\renewcommand{\labelenumi}{(F\arabic{enumi})}
\begin{enumerate}
\item\label{it:F1} If each $f^\natural(mD)/m\big|_F\sim_\Q\Gamma_m$, for some divisor $\Gamma_m$, it does not make sense to talk about an infimum of the $\Gamma_m$'s, as they are defined as $\Q$-linear classes (see remark \ref{rk:infimum of divisors}). So, we cannot compute the restriction of the pullback as a limit. We point out, that it will not be, in general, $f^\natural(mD)/m\big|_F\sim_\Q0$.
\item\label{it:F2} We should not expect that, in general, $f^*(D)\big|_F\equiv 0$ or $f^*(D)\big|_F\sim_\R0$.
\item\label{it:F3} Since we are working with a limiting process, and linear equivalence is not preserved in the limiting process, we should expect that $f^*(D)\big|_F\sim_\R\Gamma$ or $f^*(D)\big|_F\equiv\Gamma$ will encode very different meanings.
\end{enumerate}}
We will discuss each of this conditions, providing examples.

Another approach to the non-$\Q$-Gorenstein case is to use boundaries, and work in a log setting. This approach is the most commonly used, and in \cite{dFH} the authors relate some of the singularities they are defining to the log ones (\cite{dFH}, 7.2). The basic idea is to compare a discrepancy of $X$ for a sufficiently high resolution with the discrepancy of a pair $(X,\Delta)$ for a suitable boundary $\Delta$. This approach seems to fail if the resolution is given, and we will also study this case.

\vspace{2ex}

Section 4 will be focused on examples. 

To be more emphatic on how careful we have to be when dealing with \ltp singularities, we will show an example (example \ref{ex:Km>-1,K=-1}) where $K^+_{m,Y/X}>-1$ for each $m$, but $K^+_{Y/X}=-1$, so that the multiplier ideal in definition \ref{df:multideal} had to be constructed by directly looking at $K^+_{Y/X}$ and not by doing a limiting process on the $K^+_{m,Y/X}$ (see remark \ref{rk:multideal}).

We will show an example (example \ref{con:example}) of a cone $X$, where, if $f:Y\rightarrow X$ is the blow-up of the vertex, with exceptional divisor $E$, $f^*(K_X)\big|_E$ is not numerically trivial (thus, $f^*(K_X)\big|_E\nsim_\R0$). 

We will also show an example (example \ref{ex:cone/elliptic curve}) of a cone $X$ and a divisor $T'$ on $X$, such that, if $f:Y\rightarrow X$ is the blow-up of the vertex, with exceptional divisor $E$, $f^*(T')\big|_E\equiv0$ but $f^*(T')\big|_E\nsim_\R0$.

Finally, in section \ref{sect:lastexample}, we will give an example of a blow-up of a vertex of a cone $f:Y\rightarrow X$, with $Y$ smooth, where $K^-_{m,Y/X}>-1$ (so as to suggest that $X$ has log terminal singularities), but where for any choice of compatible boundary $\Delta$ on $X$ (with $\Delta$ a cone), $f$ is not a resolution of the pair $(X,\Delta)$. In particular, we can conclude that $X$ has \ltp singularities, but we cannot conclude that it has log terminal singularities (in the sense of \cite{dFH}).

In section 5, we will interpret the above results, proving a new result, and reproving, with direct methods, a result of \cite{BdFF}. Namely, we prove
\begin{thm}[Theorem \ref{thm:pullback and restriction, cones}] Let $X$ be a (projective) cone over a smooth projective variety, and let us assume that $X$ is normal; let $f:Y\rightarrow X$ be the blow-up of the vertex, and let $E$ be the exceptional divisor of $f$. Let $D$ be a Weil divisor on $X$. Then $f^*(D)\big|_E\sim_\R0$ if and only if $D$ is $\Q$-Cartier.
\end{thm}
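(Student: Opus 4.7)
The plan is to exploit the $\mathbb{G}_m$-equivariance inherent in the cone structure to put $f^*(D)$ into a standard form, then translate the triviality of its restriction to $E$ into a linear-equivalence condition on the base. Write $X = C(V,L)$ for a smooth projective variety $V$ with ample polarization $L$. The $(\Leftarrow)$ direction is immediate: if $mD$ is Cartier for some $m\ge 1$, then $f^*(mD)$ coincides with the classical pullback, so its restriction to the fiber $E$ over the vertex is a trivial Cartier class, whence $f^*(D)\big|_E\sim_\Q 0$.

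For $(\Rightarrow)$, since both sides of the equivalence depend only on the class of $D$ in $\Cl(X)$, and since $\Cl(X)$ is generated by homogeneous classes $C(\bar D)$ for $\bar D$ a Weil divisor on $V$, we may assume $D = C(\bar D)$. The $\mathbb{G}_m$-equivariance of $D$ and $f$ propagates to each $f^\natural(mD)$, and passes to the infimum $f^*(D)$; together with the fact that $f^*(D)$ agrees with $D$ over $Y\setminus E$, this pins down
\begin{equation*}
f^*(D) \;=\; \tilde D \;+\; \alpha\,E
\end{equation*}
for some $\alpha\in\R$, with $\tilde D$ the strict transform. Finiteness of $\alpha$ follows from corollary~\ref{cor:pullback and restriction}; explicitly, $\alpha = \inf_{m} n_0(m)/m$ where $n_0(m) = \min\{n\ge 0 : H^0(V, nL - m\bar D)\ne 0\}$. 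Since $E\cong V$, $\tilde D\big|_E \sim \bar D$, and $E\big|_E \sim -L$, one computes $f^*(D)\big|_E \sim_\R \bar D - \alpha L$ on $V$.

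Assuming $f^*(D)\big|_E\sim_\R 0$ gives $\bar D\sim_\R \alpha L$, and in particular $\bar D\equiv \alpha L$ numerically. Intersecting with any curve $C\subset V$ having $L\cdot C\ne 0$ (for instance a general complete-intersection curve of ample classes) yields
\begin{equation*}
\alpha \;=\; \frac{\bar D\cdot C}{L\cdot C} \;\in\; \Q.
\end{equation*}
With $\alpha\in\Q$, the $\Q$-divisor $\bar D - \alpha L$ is $\R$-linearly trivial; since $\Pic(V)\otimes\Q \hookrightarrow \Pic(V)\otimes\R$, this upgrades to $\bar D \sim_\Q \alpha L$ on $V$. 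By the standard correspondence between homogeneous Cartier divisors on a cone $C(V,L)$ and divisors on $V$ that are integer multiples of $L$ up to linear equivalence, the relation $\bar D\sim_\Q \alpha L$ translates into $D$ being $\Q$-Cartier.

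The main obstacle is justifying the decomposition $f^*(D) = \tilde D + \alpha E$ with $\alpha$ a finite real number: the equivariance must be checked at the level of the infimum defining $f^*$ rather than just at each finite stage, and the finiteness of $\alpha$ requires a boundedness argument tied to the cone structure (via either the explicit formula above or corollary~\ref{cor:pullback and restriction}). Once that structural statement is in place, the remaining steps are essentially formal: they turn a transcendental $\sim_\R$ statement into a rational one by using integrality of $\bar D$ and $L$ together with the injectivity of $\Pic(V)\otimes\Q\to\Pic(V)\otimes\R$.
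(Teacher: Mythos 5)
Your proposal is correct and follows essentially the same route as the paper's proof: reduce to $D=C_{\bar D}$ for a divisor $\bar D$ on $V$, write $f^*(C_{\bar D})=f^{-1}_*C_{\bar D}+\alpha E$, restrict to $E\cong V$ using $E\big|_E\sim -L$, and then extract $\alpha\in\Q$ and $\bar D\sim_\Q\alpha L$, hence $\Q$-Cartierness by the standard cone criterion. The only real difference is bookkeeping: the paper identifies $\alpha$ explicitly as an effectivity threshold via the compatible-boundary description of the pullback (claim \ref{claim:pullback of divisor}), whereas you obtain the decomposition directly -- and the ``main obstacle'' you flag is in fact automatic, since $E$ is the unique $f$-exceptional prime divisor, $\val_F(D)=\ord_F(D)$ for non-exceptional $F$, and $\val_E(D)\in\R$ by the de Fernex--Hacon limit, so no separate equivariance or boundedness argument is needed.
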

and
\begin{cor}[Corollary \ref{cor:pullback and restriction, cones}, \cite{BdFF}, 2.29] Let $X$ be a (projective) cone over a smooth projective polarized variety $(V,L)$, and let us assume that $X$ is normal; let $f:Y\rightarrow X$ be the blow-up of the vertex, with exceptional divisor $E$. Let $D$ be a Weil divisor on $X$. The following are equivalent
\begin{enumerate}
\item $f^*(-D)=-f^*(D)$ (as $\R$-Weil divisors);
\item $f^*(D)\big|_E\equiv0$;
\item $D\sim_\Q C_\Delta$ is the cone over a divisor $\Delta$ on $V$ such that $\Delta\equiv rL$, for some $r\in\Q$.
\end{enumerate}
In particular, in any of the above cases, $f^*(D)$ is a $\Q$-divisor.
\end{cor}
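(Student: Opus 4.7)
The plan is to prove the three-way equivalence via the cycle (i) $\Rightarrow$ (ii) $\Rightarrow$ (iii) $\Rightarrow$ (i), using the preceding theorem together with the description of $Y$ as a projective bundle over $V$.

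For (i) $\Rightarrow$ (ii), I would invoke the general numerical antieffectivity of the restriction $f^*(D)\big|_E$ promised by corollary \ref{cor:pullback and restriction}, which applies in the cone setting. The same antieffectivity holds for $f^*(-D)\big|_E$; but under (i) the latter equals $-f^*(D)\big|_E$, so $f^*(D)\big|_E$ is simultaneously numerically effective and antieffective, hence $\equiv 0$.

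For (ii) $\Rightarrow$ (iii), I would first reduce to the case $D=C_\Delta$: every Weil divisor on the cone $X$ is $\Q$-linearly equivalent to a cone divisor $C_\Delta$, where $\Delta$ is the restriction to $E\cong V$ of the strict transform of $D$, because the difference between $D$ and $C_\Delta$ is supported on components contracted to the vertex and hence $\Q$-Cartier. Then I would use the structure of $Y$ as the projective bundle $\Pspace_V(\regsh_V\oplus L)$ with projection $g:Y\to V$: the strict transform of $C_\Delta$ equals $g^*(\Delta)$, and $E\big|_E\equiv -L$. Tracking the de Fernex--Hacon infimum through this bundle description (as done in the proof of the preceding theorem) gives $f^*(C_\Delta)\big|_E \equiv \Delta - rL$ for a specific $r\in\Q$, and hypothesis (ii) then forces $\Delta\equiv rL$. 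Conversely, for (iii) $\Rightarrow$ (i), the same explicit bundle computation shows that the condition $\Delta\equiv rL$ makes the infimum limit defining $f^*(C_\Delta)$ symmetric in sign, so $f^*(-D)=-f^*(D)$; the resulting formulas visibly have $\Q$-coefficients, giving the final assertion.

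The main obstacle is extracting the restriction $f^*(C_\Delta)\big|_E$ when $\Delta$ is only numerically, rather than $\Q$-linearly, equivalent to a multiple of $L$. The theorem itself handles the $\Q$-Cartier case (equivalently $\Delta\sim_\Q rL$) through the condition $f^*(D)\big|_E\sim_\R 0$; the refinement to $\equiv 0$ works because the rounding entering $f^\natural(mC_\Delta)$ contributes only a numerically trivial correction on $E$, so the numerical class of $f^*(C_\Delta)\big|_E$ depends only on the numerical class of $\Delta$ modulo $\Q L$. Once this is pinned down, all three conditions become parallel formulations of the single constraint $\Delta\equiv rL$.
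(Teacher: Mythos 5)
Your skeleton is the paper's: the cycle (i)$\Rightarrow$(ii)$\Rightarrow$(iii)$\Rightarrow$(i), with (i)$\Rightarrow$(ii) obtained by applying corollary \ref{cor:pullback and restriction} to both $D$ and $-D$, and the rest driven by the formula $f^*(C_\Delta)=f^{-1}_*C_\Delta+tE$, $t=\inf\{s\,:\,sL-\Delta\sim_\Q\textrm{effective}\}$, from the proof of theorem \ref{thm:pullback and restriction, cones} (claim \ref{claim:pullback of divisor}). Two steps, however, are not actually established as written. The reduction to $D=C_\Delta$ is misjustified: no component of a divisor on $X$ is ``contracted to the vertex'' (the vertex has codimension at least $2$). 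The correct reason, used by the paper, is \cite{har}, Ex.\ II.6.3, giving $D\sim C_{D|_V}$, combined with the observation that all three conditions are invariant under $\Q$-linear equivalence (for (a) this uses almost-linearity of $f^*$ with respect to the $\Q$-Cartier difference). Also, in (ii)$\Rightarrow$(iii) the threshold $t$ is a priori only a real number; its rationality is a consequence of the conclusion $\Delta\equiv tL$ with $\Delta$ and $L$ integral, not an input.

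The more serious gap is (iii)$\Rightarrow$(i), which is where the real content of the corollary lies and which your proposal only asserts. Condition (i) is an equality of $\R$-Weil divisors, so you must compute the coefficient $\val_E(f^*(C_\Delta))$ exactly: ``symmetric in sign'' means precisely that $\Delta\equiv rL$ forces $t(\Delta)=r$ and $t(-\Delta)=-r$, and no argument for this is given. The mechanism is not that ``the rounding entering $f^\natural(mC_\Delta)$ contributes only a numerically trivial correction on $E$'' --- linear-equivalence data on $E$ is genuinely non-numerical in this setting (example \ref{ex:cone/elliptic curve} is exactly about a numerically trivial, non-torsion $T$) --- but that the effectivity threshold against the ample class $L$ is a numerical invariant. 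Concretely, set $T=\Delta-rL\equiv0$: for $s>0$ the divisor $sL-T$ is ample (ampleness being numerical), hence $\Q$-effective, while for $s<0$ it is anti-ample, hence admits no effective $\Q$-representative; therefore $t(T)=0$ and $f^*(C_T)=f^{-1}_*C_T$ (the paper's claim \ref{claim:pullback of torsion}). Almost-linearity with respect to the $\Q$-Cartier divisor $C_{rL}$ then yields $f^*(C_\Delta)=f^{-1}_*C_\Delta+rE$ and $f^*(-C_\Delta)=-f^{-1}_*C_\Delta-rE$, which gives (i) and shows $f^*(D)$ is a $\Q$-divisor. With this step supplied, your plan coincides with the paper's proof rather than offering an alternative route.
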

\subsection*{Acknowledgements}
First of all, I would like to thank my advisor, S\'andor Kov\'acs for his help and support. A special thanks goes to Stefano Urbinati, for introducing me to his project of a ``non-$\Q$-Gorenstein MMP without boundaries''. I also would like to thank Tommaso de Fernex for his constant disponibility and interest. I would like to thank Christopher Hacon, Marco Andreatta, and Cinzia Casagrande, who suggested me to look at \cite{AW} for examples.

\section{Some general results}
\subsection{First definitions}
All the definitions and results in this part are of \cite{dFH}.

Unless otherwise stated, all varieties are normal over $\C$ and all divisors are Weil divisors. At times, when no confusion is likely, we will use the expression \emph{divisor} for an $\R$-Weil divisor, that is, $\R$-linear combinations of prime divisors.

Let $X$ be a normal variety. A \emph{divisorial valuation} on $X$ is a discrete valuation of the function field $k(X)$ of $X$ of the form $\nu=q\val_F$ where $q\in\R_{>0}$ and $F$ is a prime divisor over $X$, that is a prime divisor on some normal variety birational to $X$. Let $\nu$ be a discrete valuation. If $\Idlsh$ is a \emph{coherent fractional ideal} of $X$ (that is, a finitely generated sub-$\regsh_X$-module of the constant field $\mathcal{K}_X$ of rational functions on $X$), we set
$$
\nu(\Idlsh):=\min\{\nu(f)\,|\,\textrm{$f\in\Idlsh(U)$, $U\cap c_X(\nu)\neq\emptyset$}\}.
$$
\begin{df} To a given fractional ideal $\Idlsh$ we can associate a divisor, called the \emph{divisorial part} of $\Idlsh$, as
$$
\div(\Idlsh):=\sum_{E\subset X}\val_E(\Idlsh)E,
$$
where the sum runs over all the prime divisors on $X$; equivalently, $\div(\Idlsh)$ is such that
$$
\regsh_X(-\div(\Idlsh))=\Idlsh\dual\dual.
$$
\end{df}
\begin{df} Let $\nu$ be a divisorial valuation on $X$. The \emph{$\natural$-valuation} or \emph{natural valuation} along $\nu$ of a divisor $F$ on $X$ is
$$
\nu^\natural(F):=\nu(\regsh_X(-F)).
$$
\end{df}
De Fernex and Hacon show (\cite{dFH}, 2.8) that, for every divisor $D$ on $X$ and every $m\in\Z_{>0}$, $m\nu^\natural(D)\geq\nu^\natural(mD)$ and
$$
\inf_{k\geq1}\frac{\nu^\natural(kD)}{k}=\liminf_{k\rightarrow\infty}\frac{\nu^\natural(kD)}{k}=\lim_{k\rightarrow\infty}\frac{\nu^\natural(k!D)}{k!}\in\R.
$$
\begin{df} Let $D$ be a divisor on $X$ and $\nu$ a divisorial valuation. The \emph{valuation along $\nu$} of $D$ is defined to be the above limit
$$
\nu(D):=\lim_{k\rightarrow\infty}\frac{\nu^\natural(k!D)}{k!}.
$$
\end{df}
\begin{df} Let $f:Y\rightarrow X$ be a proper birational morphism from a normal variety $Y$. For any divisor $D$ on $X$, the \emph{$\natural$-pullback} of $D$ along $f$ is
\begin{eqnarray}\label{eq:naturalpullbackdef}
f^\natural D:=\div(\regsh_X(-D)\cdot\regsh_Y);
\end{eqnarray}
equivalently, $f^\natural D$ is the divisor on $Y$ such that
\begin{eqnarray}\label{eq:naturalpullbackchar}
\regsh_Y(-f^\natural D)=(\regsh_X(-D)\cdot\regsh_Y)\dual\dual.
\end{eqnarray}
\end{df}
\begin{df} We define the \emph{pullback} of $D$ along $f$ as
$$
f^*D:=\sum_{E\subset Y}\val_E(D)E,
$$
where the sum runs over all prime divisors on $Y$. Equivalently,
$$
f^*D=\liminf_k\frac{f^\natural(kD)}{k}\textrm{ coefficient-wise}.
$$
\end{df}
\begin{rk}\label{rk:infimum of divisors} When we talk about an infimum of divisors, we have to be very careful if we are considering the divisors as Weil divisors, or as numerical (or as linear) classes. In the latter case, the infimum is not well defined. For example, on $X=\Pspace^2$, let $C_d$ be the divisor of a degree $d\geq1$ curve. For each $d$, $C_d\sim dC_1$. Then, $\inf \{C_d/d\}=0$ as Weil divisors: if a divisor appears in the sequence, is with coefficient $1/d$, and then does not appear anymore. If we had chosen $C_1\sim_\Q C_d/d$ as representatives of the terms of our sequence, the infimum would have been $C_1$, and clearly $C_1$ is not $\Q$-linearly equivalent (or even numerically equivalent) with $0$.
\end{rk}
We notice that the evaluation along $\nu$ and the pullback above defined agree with corresponding notions in the case that the divisor $D$ is $\Q$-Cartier.
\begin{prop}[\cite{dFH}, 2.4 and 2.10] Let $\nu$ be a divisorial valuation on $X$ and let $f:Y\rightarrow X$ be a birational morphism from a normal variety $Y$. Let $D$ be any divisor let $C$ be any $\R$-Cartier divisor, with $t\in\R_{>0}$ such that $tC$ is Cartier.
\begin{enumerate}
\item The definitions of $\nu(C)$ and $f^*(C)$ given above coincides with the usual ones. More precisely,
$$
\nu(C)=\frac{1}{t}\nu(tC)\quad and \quad f^*(C)=\frac{1}{t}f^*(tC).
$$
Moreover,
$$
\nu(tC)=\nu^\natural(tC)\quad and\quad f^\natural(tC)=f^*(tC).
$$
\item The pullback is almost linear, in the sense that
\begin{eqnarray}\label{eq:pullbacklinear}
f^\natural(D+tC)=f^\natural(D)+f^*(tC)\quad and\quad f^*(D+C)=f^*(D)+f^*(C).
\end{eqnarray}
\end{enumerate}
\end{prop}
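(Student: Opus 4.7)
The plan is to reduce everything to the classical behaviour of invertible sheaves, using two facts: when $tC$ is Cartier, $\regsh_X(-tC)$ is locally principal, and tensoring by an invertible sheaf commutes with taking the double dual on a normal variety.

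For the equality $\nu^\natural(tC) = \nu(tC)$ when $tC$ is Cartier, I would work locally near the center of $\nu$: pick a generator $g$ of $\regsh_X(-tC)$, so $\nu^\natural(tC) = \nu(g)$. Then $g^k$ generates $\regsh_X(-ktC)$, whence $\nu^\natural(ktC) = k\nu(g)$; the sequence $\nu^\natural(k!tC)/k!$ is therefore constant and the limit defining $\nu(tC)$ equals $\nu(g) = \nu^\natural(tC)$, which is the classical valuation of $tC$. The assignment $\nu(C) := \nu(tC)/t$ is then well-defined on $\R$-Cartier $C$: for any other $s \in \R_{>0}$ with $sC$ Cartier, applying the Cartier case to $stC$ gives $\nu(stC) = s\nu(tC) = t\nu(sC)$, so $\nu(tC)/t = \nu(sC)/s$. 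The equality $f^\natural(tC) = f^*(tC)$ is analogous: since $tC$ is Cartier, $\regsh_X(-tC)\cdot\regsh_Y$ is the pullback of an invertible sheaf and hence already reflexive, so the double dual in (\ref{eq:naturalpullbackchar}) is trivial and the natural pullback coincides with the classical Cartier pullback.

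For the first identity in (2), since $tC$ is Cartier, $\regsh_X(-tC)$ is invertible, and as fractional ideals $\regsh_X(-(D+tC)) = \regsh_X(-D)\cdot\regsh_X(-tC)$. Restricting to $Y$,
$$
\regsh_X(-(D+tC))\cdot\regsh_Y = \bigl(\regsh_X(-D)\cdot\regsh_Y\bigr)\otimes_{\regsh_Y} f^*\regsh_X(-tC).
$$
Tensoring with an invertible sheaf commutes with double-dualization, so taking $(-)\dual\dual$ and invoking (\ref{eq:naturalpullbackchar}) yields
$$
\regsh_Y(-f^\natural(D+tC)) = \regsh_Y(-f^\natural(D))\otimes\regsh_Y(-f^*(tC)),
$$
whence $f^\natural(D+tC) = f^\natural(D) + f^*(tC)$.

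For the second identity in (2), pick $s \in \Z_{>0}$ with $sC$ Cartier. For every positive multiple $k$ of $s$, the first identity of (2) combined with $f^\natural(kC) = f^*(kC) = kf^*(C)$ gives, coefficient-wise,
$$
\frac{f^\natural(k(D+C))}{k} = \frac{f^\natural(kD)}{k} + f^*(C).
$$
Taking $\liminf$ as $k \to \infty$ along multiples of $s$ computes the same value as along all positive integers, by the equality of $\inf$, $\liminf$ and the $k!$-limit recorded before the definition of $\nu$ in the excerpt; this gives $f^*(D+C) = f^*(D) + f^*(C)$. The real engine of the argument is the commutation of double-dualization with tensoring by an invertible sheaf; once this is in hand, the $\R$-Cartier part of $D+C$ behaves linearly even though $D$ alone need not, and the limit step essentially drops out because $s$-multiples produce a constant correction $f^*(C)$. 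The main obstacle I expect is bookkeeping the reflexivity carefully — in particular verifying that $(\Fscr\otimes\Lscr)\dual\dual = \Fscr\dual\dual\otimes\Lscr$ for $\Lscr$ invertible on a normal variety — but this is standard once stated.
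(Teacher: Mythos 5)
The paper itself gives no proof of this proposition (it is quoted from \cite{dFH}, 2.4 and 2.10), so the comparison is with the standard argument there; your mechanism --- local principality of $\regsh_X(-tC)$, the identity $\regsh_X(-(D+tC))=\regsh_X(-D)\cdot\regsh_X(-tC)$ as fractional ideals, commutation of double-dualization with tensoring by an invertible sheaf, and passage to the limit along suitable multiples --- is exactly that argument. The Cartier case of (1) (constant sequence $\nu^\natural(k!\,tC)/k!$, reflexivity of $\regsh_X(-tC)\cdot\regsh_Y$) and the first identity of (2) are argued correctly.

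There is, however, a concrete gap in the stated generality: the proposition allows $t\in\R_{>0}$, while your proof of $f^*(D+C)=f^*(D)+f^*(C)$ begins with ``pick $s\in\Z_{>0}$ with $sC$ Cartier''. If $t$ is irrational and $C\neq0$, no such $s$ exists --- a nonzero coefficient of $C$ is of the form $n/t$ with $n\in\Z\setminus\{0\}$ (because $tC$ is integral), so $sn/t\in\Z$ would force $t\in\Q$; thus $sC$ is not even an integral divisor. The same issue affects your well-definedness step ``apply the Cartier case to $stC$'': for $C=\sqrt{2}H$ with $H$ Cartier, $t=1/\sqrt{2}$ and $s=\sqrt{2}$, the divisor $stC=C$ is neither integral nor Cartier, and in any case the Cartier case only yields integer homogeneity $\nu(kG)=k\nu(G)$, not scaling by real factors. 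So as written the argument establishes the proposition only when $C$ is $\Q$-Cartier (which is, admittedly, the only case used in this paper). To cover real $t$, write $C=(1/t)G$ with $G=tC$ Cartier, choose integers $a_k\leq k/t<a_k+1$, and compare $f^\natural(k(D+C))$ with $f^\natural(kD)+a_kf^*(G)$ using your first identity of (2) together with monotonicity and subadditivity of $f^\natural$; the error is $f^\natural$ of a fixed divisor supported on $\supp G$, hence disappears after dividing by $k$. This also requires fixing the rounding convention for non-integral divisors (e.g. $\regsh_X(-F):=\regsh_X(\lfloor -F\rfloor)$), which your write-up never invokes but which is needed for $\nu^\natural$ and $f^\natural$ of $k(D+C)$ to make sense when $k(D+C)$ is not integral.
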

We observe that when working with natural valuation and natural pullback, the above properties are no longer true for $\R$-Cartier divisors which are not Cartier, see \cite{dFH}, 2.3. For example it may happen that $2C$ is Cartier, but $\nu^\natural(2C)\neq2\nu^\natural(C)$.
\begin{lm}[\cite{dFH}, 2.7]\label{lm:(fg)*-g*f*} Let $f:Y\rightarrow X$ and $g:V\rightarrow Y$ be two birational morphisms of normal varieties, and let $D$ be a divisor on $X$. The divisor $(fg)^\natural(D)-g^\natural f^\natural(D)$ is effective and $g$-exceptional. Moreover, if $\regsh_X(-D)\cdot\regsh_Y$ is an invertible sheaf, $(fg)^\natural(D)=g^\natural f^\natural(D)$.
\end{lm}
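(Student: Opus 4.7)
The plan is to work directly from the characterization in~(\ref{eq:naturalpullbackchar}), applied both to the composite $fg$ and to the successive pullback along $g$ after $f$. This reduces the whole statement to comparing the two fractional ideals $\regsh_X(-D)\cdot\regsh_V$ and $\regsh_Y(-f^\natural D)\cdot\regsh_V$ on $V$, and the key general fact I would invoke is that reflexivization $(\cdot)^{\vee\vee}$ of a fractional ideal on a normal variety does not alter its stalk at any codimension-one point: there the local ring is a DVR and every fractional ideal is automatically reflexive.

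For effectiveness, the inclusion $\regsh_X(-D)\cdot\regsh_Y \subseteq (\regsh_X(-D)\cdot\regsh_Y)^{\vee\vee} = \regsh_Y(-f^\natural D)$ is immediate from the definition and extends along $g$ to
$$
\regsh_X(-D)\cdot\regsh_V \subseteq \regsh_Y(-f^\natural D)\cdot\regsh_V.
$$
Double-dualizing preserves the inclusion, so $\regsh_V(-(fg)^\natural D) \subseteq \regsh_V(-g^\natural f^\natural D)$, which read off coefficient-wise as Weil divisors is exactly $(fg)^\natural D \geq g^\natural f^\natural D$. Hence the difference is effective.

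For $g$-exceptionality I would pick a prime divisor $E\subset V$ that is not $g$-exceptional, so that $F:=g(E)$ is a prime divisor on $Y$. By the codimension-one invariance of reflexivization on $Y$, the stalks of $\regsh_X(-D)\cdot\regsh_Y$ and of $\regsh_Y(-f^\natural D)$ at $F$ coincide; pushing this equality of stalks into the DVR $\regsh_{V,E}$ preserves it, and hence $\val_E$ of the two extensions to $V$ agree. This gives $\val_E((fg)^\natural D) = \val_E(g^\natural f^\natural D)$, and since it holds at every non-$g$-exceptional prime the difference is supported on $g$-exceptional divisors. The refinement when $\regsh_X(-D)\cdot\regsh_Y$ is invertible is then immediate: an invertible sheaf is already reflexive, so $\regsh_X(-D)\cdot\regsh_Y = \regsh_Y(-f^\natural D)$ as sheaves on $Y$, extending to $V$ preserves the equality globally (not just at codimension one), and double-dualizing yields $(fg)^\natural D = g^\natural f^\natural D$. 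The only non-formal input throughout is this codimension-one invariance of reflexivization; the rest is bookkeeping with~(\ref{eq:naturalpullbackdef})--(\ref{eq:naturalpullbackchar}).
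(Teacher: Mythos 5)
The paper offers no proof of this statement --- it is imported verbatim from \cite{dFH}, 2.7 --- so there is nothing in-text to compare against; your argument is correct and is essentially the standard one from the cited source. The two pillars you use are sound: reflexivization of a coherent fractional ideal on a normal variety changes nothing at codimension-one points (equivalently, $\regsh_X(-D)\cdot\regsh_Y$ and $\regsh_Y(-f^\natural D)$ agree outside a closed subset of codimension $\geq 2$ in $Y$, whose preimage can only contain $g$-exceptional divisors of $V$), and the extension $\Idlsh\mapsto\Idlsh\cdot\regsh_V$ is transitive and inclusion-preserving, which yields effectiveness and, in the invertible case, equality.
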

\subsection{Canonical divisors}
Given the above definitions for pullback of Weil divisors, there are different choices for relative canonical divisors, and hence for singularities. If $f:Y\rightarrow X$ is a proper birational morphism (of normal varieties) and if we choose a canonical divisor $K_X$ on $X$, we will always assume that the canonical divisor $K_Y$ on $Y$ be chosen such that $f_*K_Y=K_X$ (as Weil divisors).

Almost all the notions in this section are of \cite{dFH}, but will use the slightly different notation for simplicity in the statements. This notation was introduced by the author and Stefano Urbinati in \cite{ltsings}, 3.1.

\begin{df}[\cite{dFH}] Let $f:Y\rightarrow X$ be a proper birational map of normal varieties. The \emph{$m$-limiting relative canonical $\Q$-divisors} are
\begin{eqnarray*}
&&K^-_{m,Y/X}:=K_Y-\frac{1}{m}f^\natural(mK_X),\quad K^-_{Y/X}:=K_Y-f^*(K_X)\\
&&K^+_{m,Y/X}:=K_Y+\frac{1}{m}f^\natural(-mK_X),\quad K^+_{Y/X}:=K_Y+f^*(-K_X).
\end{eqnarray*}
\end{df}
As shown by \cite{dFH} (and as from the definitions), for all $m,q\geq1$,
\begin{eqnarray}\label{eq:K-<=K^+}
K^-_{m,Y/X}\leq K^-_{qm,Y/X}\leq K^-_{Y/X}\leq K^+_{Y/X}\leq K^+_{mq,Y/X}\leq K^+_{m,Y/X}
\end{eqnarray}
and
\begin{eqnarray}\label{K-(Y/X)=limsupK-(m,Y/X)}
K^-_{Y/X}=\limsup K^-_{m,Y/X}, \quad K^+_{Y/X}=\liminf K^+_{m,Y/X}
\end{eqnarray}
(coefficient-wise).
\begin{df} Let $\Delta$ be a boundary on $X$ and let $f:Y\rightarrow X$ be a proper birational morphism. The \emph{log relative canonical $\Q$-divisor} of $(Y,f_*^{-1}\Delta)$ over $(X,\Delta)$ is given by
$$
K^\Delta_{Y/X}:=K_Y+f_*^{-1}\Delta-f^*(K_X+\Delta)=K_Y+f_*^{-1}\Delta+f^*(-K_X-\Delta)
$$
where $f^{-1}_*\Delta$ is the strict transform of $\Delta$ on $Y$.
\end{df}

\begin{rk}\label{rk:dFH 3.9} With the same computation as \cite{dFH}, 3.9, we find that, if $\Delta$ is a boundary for $X$ and $m\geq1$ is such that $m(K_X+\Delta)$ is Cartier,
$$
\begin{array}{ll}
K^-_{m,Y/X}=K^\Delta_{Y/X}-\frac{1}{m}f^\natural(-m\Delta)-f_*^{-1}\Delta,\hspace{0.7em}&K^-_{Y/X}=K^\Delta_{Y/X}-f^*(-\Delta)-f_*^{-1}\Delta,\\
K^+_{m,Y/X}=K^\Delta_{Y/X}+\frac{1}{m}f^\natural(m\Delta)-f_*^{-1}\Delta,&K^+_{Y/X}=K^\Delta_{Y/X}+f^*(\Delta)-f_*^{-1}\Delta.
\end{array}
$$
Notice that, if $m$ is such that $m(K_X+\Delta)$ is Cartier and $\Delta$ is effective, we have
$$
K^\Delta_{Y/X}\leq K^-_{m,Y/X}\leq K^-_{Y/X}\leq K^+_{Y/X}\leq K^+_{m,Y/X}.
$$
\end{rk}
\begin{df} Let $Y\rightarrow X$ be a proper birational morphism with $Y$ normal, and let $F$ be a prime divisor on $Y$. For each integer $m\geq1$, the \emph{$m$-limiting discrepancy} of $F$ with respect to $X$ is
$$
a_m(F,X):=\ord_F(K^-_{m,Y/X}).
$$
The \emph{discrepancy} of $F$ with respect to $X$ is
$$
a(F,X):=\ord_F(K^-_{Y/X}).
$$
\end{df}
We recall that, from \cite{komo}, if $\Delta$ is a boundary for $K_X$, we have
$$
a(F,X,\Delta):=\ord_F(K^\Delta_{Y/X}).
$$
The next definition is of the author and Stefano Urbinati in \cite{ltsings}.
\begin{df}[\cite{ltsings}, 4.3]\label{df:sing} The variety $X$ is said to be satsfy condition $\mathrm{M}_{\geq-1}$ (resp. $\mathrm{M}_{>-1}$, resp. $\mathrm{M}_{\geq0}$, resp. $\mathrm{M}_{>0}$) if there is an integer $m_0$ such that $a_m(F,X)\geq-1$ (resp. $>-1$, resp. $\geq0$, resp. $>0$) for every prime divisor $F$ over $X$ and $m=m_0$ (and hence for any positive multiple $m$ of $m_0$).
\end{df}
\begin{thm}[\cite{dFH}, 7.2, \cite{ltsings}, 4.5]\label{thm:dFH 7.2} A variety $X$ satsfies condition $\mathrm{M}_{\geq-1}$ (resp. $\mathrm{M}_{>-1}$, resp. $\mathrm{M}_{\geq0}$, resp. $\mathrm{M}_{>0}$) if and only if there is an effective boundary $\Delta$ such that $(X,\Delta)$ is log canonical (resp. Kawamata log terminal, resp. canonical, resp. terminal).
\end{thm}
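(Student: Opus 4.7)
The plan is to follow the strategy of de Fernex--Hacon: Remark \ref{rk:dFH 3.9} provides the bridge between the $m$-limiting discrepancies $a_m(F,X)$ and the log discrepancies $a(F,X,\Delta)$ of a pair. I will sketch the argument for the log canonical / $\mathrm{M}_{\geq-1}$ equivalence; the klt, canonical, and terminal cases follow verbatim, replacing ``$\geq -1$'' with ``$>-1$'', ``$\geq 0$'', or ``$>0$'' throughout.

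\emph{Boundary $\Rightarrow \mathrm{M}_{\geq-1}$.} Suppose $\Delta$ is effective with $(X,\Delta)$ log canonical, and choose $m_0$ such that $m_0(K_X+\Delta)$ is Cartier. For any proper birational $f:Y\rightarrow X$, Remark \ref{rk:dFH 3.9} gives the coefficient-wise inequality $K^\Delta_{Y/X}\leq K^-_{m_0,Y/X}$. Taking $\ord_F$ for any prime $F$ on $Y$ yields $a_{m_0}(F,X)\geq a(F,X,\Delta)\geq -1$, so condition $\mathrm{M}_{\geq-1}$ holds for this $m_0$; it then holds for every positive multiple of $m_0$ thanks to \eqref{eq:K-<=K^+}.

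\emph{$\mathrm{M}_{\geq-1} \Rightarrow$ boundary.} Assume $a_m(F,X)\geq -1$ for every $F$ over $X$ and every positive multiple $m$ of a fixed $m_0$. The statement is local, so pass to an affine open; enlarging $m_0$ we may further assume that $\regsh_X(-m_0K_X)$ is globally generated. Pick a general $D \in |{-}m_0K_X|$ and set $\Delta := \tfrac{1}{m_0}D$. Then $\Delta$ is an effective boundary with $m_0(K_X+\Delta)\sim 0$ Cartier. Fix any log resolution $f:Y\rightarrow X$ of the pair $(X,\Delta)$. The generality of $D$ with respect to this single $f$ ensures that $\regsh_X(D)\cdot\regsh_Y$ is invertible on $Y$ and equals $\regsh_Y(f^{-1}_*D)$, which translates to $\tfrac{1}{m_0}f^\natural(-m_0\Delta)+f^{-1}_*\Delta=0$. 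Substituting into the formula of Remark \ref{rk:dFH 3.9} gives
$$K^\Delta_{Y/X} = K^-_{m_0,Y/X}$$
on $Y$, whence $a(F,X,\Delta) = a_{m_0}(F,X) \geq -1$ for every prime $F$ on $Y$. Since the log canonical property can be checked on one log resolution, $(X,\Delta)$ is log canonical.

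\emph{Main obstacle.} The delicate step is the implication ``$D$ general'' $\Rightarrow f^\natural(-D)=-f^{-1}_*D$: the $\natural$-pullback of an antieffective divisor is not automatically the negative of the strict transform, and one must use that a general member of a base-point-free linear series on $X$ misses the exceptional locus of $f$. After possibly blowing up further to make $\regsh_X(D)\cdot\regsh_Y$ invertible, this reduces via Lemma \ref{lm:(fg)*-g*f*} to the Cartier case, where the identity is immediate. A secondary concern is that a single $\Delta$ must work uniformly for all $F$ over $X$; this is resolved by noting that log canonicity of $(X,\Delta)$ only requires checking discrepancies on one log resolution of the pair, and $D$ can be chosen general with respect to that fixed resolution.
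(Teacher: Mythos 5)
Your strategy is the right one---it is precisely the compatible-boundary argument of \cite{dFH} that the paper's proof defers to (Remark \ref{rk:dFH 3.9} for the easy implication, a general-member construction as in Lemma \ref{lm:compatible D-boundary} for the hard one), and your easy direction is correct. But the hard direction, as written, has concrete gaps. First, generality of $D$ does not make $\regsh_X(D)\cdot\regsh_Y$ invertible: writing $D=\div(\phi)-m_0K_X$ for a section $\phi$, one has $\regsh_X(D)=\phi^{-1}\regsh_X(-m_0K_X)$, so invertibility of $\regsh_X(D)\cdot\regsh_Y$ depends only on whether $f$ dominates the blow-up of the fractional ideal $\regsh_X(-m_0K_X)$, not on which member of the system you pick. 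Likewise the mechanism you invoke---``a general member of a base-point-free series misses $\exc(f)$''---fails exactly in the interesting case: when $-m_0K_X$ is not Cartier, every member of $|{-}m_0K_X|$ contains the non-Cartier locus of $-m_0K_X$, which sits inside $f(\exc(f))$ for any resolution, so neither $D$ nor its strict transform can avoid the exceptional locus. What is true, and all that your computation needs, is the divisorial identity $f^\natural(-D)=-f^{-1}_*D$: for each of the finitely many exceptional primes $E$ on $Y$, a general global section $\phi$ of the globally generated sheaf $\regsh_X(-m_0K_X)$ achieves the minimum $\val_E(\phi)=\val_E(\regsh_X(-m_0K_X))$, so $\div(\regsh_X(D)\cdot\regsh_Y)$ has no exceptional components; no invertibility, and no detour through Lemma \ref{lm:(fg)*-g*f*}, is required. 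Second, your quantifiers are circular: $D$ is chosen general ``with respect to this single $f$,'' but $f$ is a log resolution of $(X,\tfrac{1}{m_0}D)$, which depends on $D$, and the fix you sketch in the ``obstacle'' paragraph repeats the same circularity. The standard repair---and the content of Lemma \ref{lm:compatible D-boundary} applied to the divisor $K_X$, which you could simply quote---is to fix $f$ first as a log resolution of $X$ and of $\regsh_X(-m_0K_X)$, and only then choose $D$ general, so that Bertini on $Y$ makes $\exc(f)\cup f^{-1}_*D$ simple normal crossing.

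Third, ``the statement is local, so pass to an affine open'' proves less than is claimed: the theorem asserts the existence of a single effective boundary $\Delta$ on $X$, and boundaries produced on the members of an affine cover need not glue to a global $\Delta\geq0$ with $K_X+\Delta$ $\Q$-Cartier. In \cite{dFH} this is handled by twisting rather than localizing: one takes a sufficiently positive Cartier divisor $A$ with $\regsh_X(-m_0K_X)\otimes\regsh_X(m_0A)$ globally generated and lets $m_0\Delta$ be a general member of the corresponding system, so that $m_0(K_X+\Delta)\sim m_0A$ is Cartier (not necessarily trivial); the rest of your computation is unchanged. Finally, be aware that ``the other cases follow verbatim'' hides a small point for $\mathrm{M}_{\geq0}$ and $\mathrm{M}_{>0}$: canonicity or terminality of a pair is not detected by the exceptional discrepancies on one log resolution alone, since divisors over $Y$ with centers in intersections of components of $f^{-1}_*\Delta$ must also be controlled; this works out here only because $\Delta$ has snc support and coefficients $1/m_0$ with $m_0$ large, and it deserves a sentence.
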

\begin{proof} In \cite{dFH}, the result is proven only for the conditions $\mathrm{M}_{\geq-1}$ and $\mathrm{M}_{>-1}$, but the same proof \emph{verbatim} works for the other two cases as well.
\end{proof}
\begin{df}[\cite{dFH}] A variety $X$ is said to have \emph{log terminal singularities} if it satisfies condition $\mathrm{M}_{>-1}$.
\end{df}
\subsection{Log terminal${}^+$ singularities}
The definitions and results in this section are of the author and Stefano Urbinati.
\begin{df}[\cite{ltsings}, 5.1] Let $Y\rightarrow X$ be a proper birational morphism with $Y$ normal, and let $F$ be a prime divisor on $Y$. The \emph{discrepancy${}^+$} of $F$ with respect to $X$ is
$$
a^+(F,X):=\ord_F(K^+_{Y/X}).
$$
\end{df}
We recall that in \cite{dFH}, a normal variety $X$ is defined \emph{canonical} (resp. \emph{terminal}), if $a^+(F,X)\geq0$ (resp. $a^+(F,X)>0$) for all prime divisors $F$, exceptional over $X$.
\begin{df}[\cite{ltsings}, 5.2] Let $X$ be a normal variety. we say that $X$ has \emph{log terminal${}^+$}, or simply, \emph{\ltp}, singularities if $a^+(F,X)>-1$ for all prime divisors $F$, exceptional over $X$.
\end{df}
\begin{lm}[\cite{ltsings}, 5.4]\label{lm:ltwithoneresolution} Let $X$ be a normal variety, and let $f:Y\rightarrow X$ be a log resolution, i.e. the exceptional locus of $f$ is a simple normal crossing divisor. If $a^+(E,X)>-1+\e$ for all prime exceptional divisors $E$ on $Y$, for some $\e>0$, then $a^+(F,X)>-1+\e$ for all prime divisors $F$ exceptional over $X$. In particular, then, $X$ is \ltp.
\end{lm}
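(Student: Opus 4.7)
The plan is to take an arbitrary prime divisor $F$ exceptional over $X$ on a smooth higher model $g\colon V\to Y$ (which can always be arranged after further resolution), derive a transitivity identity relating $K^+_{V/X}$ to $g^*(K^+_{Y/X})$, and then bound $\ord_F(K^+_{V/X})$ by combining the hypothesis with the standard snc log-canonical estimate on $(Y,\exc(f))$.

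By lemma~\ref{lm:(fg)*-g*f*}, for each $m\geq 1$ we have $(fg)^\natural(-mK_X)=g^\natural f^\natural(-mK_X)+E_m$ with $E_m\geq 0$ and $g$-exceptional. Since $Y$ is smooth, $f^\natural(-mK_X)$ is Cartier, so $g^\natural f^\natural(-mK_X)=g^*f^\natural(-mK_X)$. Dividing by $m$ and taking the coefficient-wise $\liminf$ along $m=k!$ (which, by \cite{dFH}, 2.8, realizes the pullback as a genuine limit) produces
\[
(fg)^*(-K_X)=g^*\bigl(f^*(-K_X)\bigr)+E
\]
with $E\geq 0$ effective and $g$-exceptional. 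Adding $K_V$ and writing $K_V=g^*K_Y+K_{V/Y}$ with $K_{V/Y}\geq 0$ effective and $g$-exceptional yields the key identity
\[
K^+_{V/X}=K_{V/Y}+g^*(K^+_{Y/X})+E.
\]

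I would then analyze $a^+(F,X)=\ord_F(K^+_{V/X})$ by cases. If $F$ is not $g$-exceptional, it is the strict transform of an $f$-exceptional prime $E'\subset Y$, and the identity gives $a^+(F,X)=a^+(E',X)>-1+\e$. If $F$ is $g$-exceptional, let $E_1,\dots,E_r$ be the $f$-exceptional primes through $g(F)$ and set $b_i:=\ord_F(g^*E_i)\in\Z_{\geq 0}$. Since $\exc(f)=\sum_jE_j$ is snc, $(Y,\sum_jE_j)$ is log canonical, so the standard log-discrepancy computation for snc pairs gives $\ord_F(K_{V/Y})\geq\sum_ib_i-1$. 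Combining with the key identity and the hypothesis $a^+(E_i,X)>-1+\e$,
\[
\ord_F(K^+_{V/X})\ >\ \bigl(\sum_ib_i-1\bigr)+(-1+\e)\sum_ib_i\ =\ \e\sum_ib_i-1,
\]
which is $>-1+\e$ whenever $\sum_ib_i\geq 1$. In the remaining case $\sum_ib_i=0$, the image $g(F)$ avoids $\exc(f)$, so $f$ is an isomorphism near $g(F)$, both $K^+_{Y/X}$ and $E$ vanish locally, and $\ord_F(K^+_{V/X})=\ord_F(K_{V/Y})\geq 1$.

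The step I expect to be the main technical obstacle is justifying the liminf splitting $(fg)^*(-K_X)=g^*\bigl(f^*(-K_X)\bigr)+E$: one must verify coefficient-wise convergence along the factorial subsequence (using $\R$-linearity of $g^*$ on divisors on smooth $Y$), and that the resulting $E$ remains an effective, $g$-exceptional $\R$-divisor (locally supported over $\exc(f)$, which is what makes the $\sum_ib_i=0$ case work). Granted this identity, the rest of the argument is the case-by-case bookkeeping sketched above.
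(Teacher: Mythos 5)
Your argument is correct, and it is the standard ``one log resolution suffices'' argument; note that the paper itself does not prove this lemma but imports it from \cite{ltsings}, 5.4, so the comparison is with that argument rather than with a proof printed here. The identity you flag as the main technical obstacle, $(fg)^*(-K_X)=g^*f^*(-K_X)+E$ with $E\geq0$ and $g$-exceptional (equivalently, $K^+_{V/X}-g^*K^+_{Y/X}$ is effective and $g$-exceptional), is precisely \cite{dFH}, 2.13, which this paper already invokes in the proof of lemma \ref{lm:multideal}; your liminf justification of it (fixed finite supports, genuine convergence along $m=k!$ by \cite{dFH}, 2.8, and $\R$-linearity of $g^*$ on the smooth $Y$) is sound, but it can simply be replaced by that citation. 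One caveat in your case $\sum_i b_i=0$: there you only get $\ord_F(K^+_{V/X})\geq 1$, which is $>-1+\e$ only when $\e<2$; this limitation is inherent in the literal statement (blowing up a smooth codimension-two center away from $\exc(f)$ produces a divisor with $a^+=1$ whenever $\dim X\geq2$), so it reflects an implicit smallness assumption on $\e$ rather than a gap in your proof, and it does not affect the conclusion that $X$ is \ltp.
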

Notice that the hypothesis of this lemma is satisfied if $a^+(E,X)>-1$ for all prime exceptional divisors $E$ on $Y$, since there are only finitely many such divisors.
\begin{prop}[\cite{ltsings}, 5.6]\label{prop:sheaf criterion} Let $X$ be a normal variety. Then $X$ is \ltp if and only if there exists $\e\in\Q$, $\e>0$, such that, for all sufficiently divisible $m\geq1$, and for all resolutions of $X$,
$$
\regsh_X(mK_X)\cdot\regsh_Y\subseteq\regsh_Y\big(m(K_Y+(1-\e)F_Y)\big),
$$
where $F_Y$ is the reduced exceptional divisor of $f$.
\end{prop}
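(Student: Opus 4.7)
The plan is to reinterpret the proposed inclusion as a coefficient-wise inequality on $K^+_{m,Y/X}$ along the $f$-exceptional divisors; once this is done, the equivalence falls out of the monotonicity (\ref{eq:K-<=K^+}), the limit formula (\ref{K-(Y/X)=limsupK-(m,Y/X)}), and lemma \ref{lm:ltwithoneresolution}. By definition of the $\natural$-pullback, $\div(\regsh_X(mK_X)\cdot\regsh_Y)=f^\natural(-mK_X)$, so the reflexive hull of $\regsh_X(mK_X)\cdot\regsh_Y$ equals $\regsh_Y(-f^\natural(-mK_X))$. Since $\regsh_Y\bigl(m(K_Y+(1-\e)F_Y)\bigr)$ is already reflexive, the inclusion
$$
\regsh_X(mK_X)\cdot\regsh_Y\subseteq\regsh_Y\bigl(m(K_Y+(1-\e)F_Y)\bigr)
$$
is equivalent to $-f^\natural(-mK_X)\le m(K_Y+(1-\e)F_Y)$, that is, to
$$
K^+_{m,Y/X}+(1-\e)F_Y\ge 0.
$$
Because $K^+_{m,Y/X}$ is supported on the $f$-exceptional locus (off that locus $f$ is an isomorphism, so $K_Y$ coincides with the strict transform of $K_X$ and the $\natural$-pullback computation gives $0$), this is the same as requiring $\ord_E(K^+_{m,Y/X})\ge\e-1$ for every $f$-exceptional prime divisor $E$.

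With this reformulation in hand, both directions are short. For $(\Rightarrow)$, the \ltp hypothesis together with lemma \ref{lm:ltwithoneresolution}, applied to a single log resolution and using the finiteness of its exceptional divisors, produces a uniform $\e>0$ with $a^+(F,X)>-1+\e$ for every prime $F$ over $X$. Then for any resolution $f:Y\to X$, the monotonicity (\ref{eq:K-<=K^+}) yields $\ord_E(K^+_{m,Y/X})\ge\ord_E(K^+_{Y/X})=a^+(E,X)>-1+\e$ for every $f$-exceptional $E$ and every $m\ge 1$, which by the translation step is exactly the desired sheaf inclusion. For $(\Leftarrow)$, fix a log resolution $Y$ of $X$; the translation step gives $\ord_E(K^+_{m,Y/X})\ge\e-1$ for all sufficiently divisible $m$, and passing to the $\liminf$ via (\ref{K-(Y/X)=limsupK-(m,Y/X)}) produces $a^+(E,X)\ge\e-1>-1$ for each exceptional $E$ on $Y$. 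Lemma \ref{lm:ltwithoneresolution} then propagates this inequality to every prime divisor over $X$, so $X$ is \ltp.

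The main obstacle is the translation step itself: one must carefully disentangle the interplay between the $\natural$-pullback, the divisorial part of a fractional ideal, and reflexive hulls, and then verify that the resulting numerical condition really involves only $f$-exceptional primes (so that the offset $(1-\e)F_Y$ is the correct one). Once this is settled, each implication is a one-line invocation of results already established in the excerpt.
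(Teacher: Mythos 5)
Your argument is correct and follows essentially the route the paper indicates: the paper proves this by citation to \cite{ltsings}, 5.6, with remark \ref{rk:proof of dFH, 8.2} noting that the key step is exactly your translation of the sheaf inclusion into the coefficient condition on $mK_Y+f^\natural(-mK_X)$ (i.e.\ on $K^+_{m,Y/X}$ along exceptional divisors), after which one concludes via the monotonicity \eqref{eq:K-<=K^+}, the limit description of $K^+_{Y/X}$, and lemma \ref{lm:ltwithoneresolution}. The only cosmetic difference is your non-strict threshold $\ge\e-1$ versus the strict inequality in the remark, which is harmless since $\e$ is existentially quantified.
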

\begin{rk}\label{rk:proof of dFH, 8.2} The proof of this proposition follows the proof of \cite{dFH}, 8.2, and uses the equivalence of the above inclusion of sheaves with the condition
$$
mK_Y+f^\natural(-mK_X)>m(-1+\e).
$$
\end{rk}
\begin{thm}[\cite{ltsings}, 5.10]\label{thm:fgcr} If $X$ is a \ltp normal variety, then $\Rscr(X,K_X):=\oplus_{m\geq0}\regsh_{X}(mK_X)$ is finitely generated. In this case, moreover, $X$ is klt if and only if $\Rscr(X,-K_X):=\oplus_{m\geq0}\regsh_{X}(-mK_X)$ is finitely generated.
\end{thm}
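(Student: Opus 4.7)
The plan is to identify $\Rscr(X,K_X)$, after passing to a sufficiently divisible Veronese, with the log canonical ring of a log smooth klt pair on a resolution, and then invoke the main result of BCHM.

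Fix a log resolution $f\colon Y\to X$ with reduced exceptional divisor $F_Y$. For every rational $\e\in(0,1)$ the pair $(Y,(1-\e)F_Y)$ is log smooth klt, so by BCHM the log canonical ring
\[
\Rscr\bigl(Y,K_Y+(1-\e)F_Y\bigr)=\bigoplus_{m\ge 0}H^0\bigl(Y,\lfloor m(K_Y+(1-\e)F_Y)\rfloor\bigr)
\]
is finitely generated. Since $X$ is \ltp, Proposition \ref{prop:sheaf criterion} supplies some such rational $\e>0$ for which
\[
\regsh_X(mK_X)\cdot\regsh_Y\subseteq\regsh_Y\bigl(m(K_Y+(1-\e)F_Y)\bigr)
\]
holds for every sufficiently divisible $m$. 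Pushing forward and using $f_*(\regsh_X(mK_X)\cdot\regsh_Y)=\regsh_X(mK_X)$ gives one containment $\regsh_X(mK_X)\subseteq f_*\regsh_Y(m(K_Y+(1-\e)F_Y))$. For the reverse containment, I would use that $F_Y$ is $f$-exceptional---so $f(F_Y)$ has codimension $\ge 2$ in $X$---together with the reflexivity of $\regsh_X(mK_X)$: a section of the right-hand side restricts to a section of $\regsh_X(mK_X)$ on the complement of $f(F_Y)$ and extends uniquely by Hartogs. This yields the equality $\regsh_X(mK_X)=f_*\regsh_Y(m(K_Y+(1-\e)F_Y))$. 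Taking global sections and passing to the Veronese subring in degrees divisible by some $d$ that clears the denominators of $1-\e$, one obtains a graded-ring isomorphism
\[
\Rscr(X,K_X)^{(d)}\cong\Rscr\bigl(Y,K_Y+(1-\e)F_Y\bigr)^{(d)}.
\]
Finite generation is preserved in both directions under passage to a Veronese subring, so $\Rscr(X,K_X)$ is finitely generated.

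For the second assertion, the implication that klt forces finite generation of $\Rscr(X,-K_X)$ proceeds by the same strategy: the condition $K^-_{m,Y/X}>-1$ characterizing klt translates, in the spirit of Remark \ref{rk:proof of dFH, 8.2}, into a sheaf comparison of $\regsh_X(-mK_X)\cdot\regsh_Y$ with a log canonical divisor of the same klt pair $(Y,(1-\e)F_Y)$, and the same pushforward/reflexivity manipulation identifies a Veronese of $\Rscr(X,-K_X)$ with the log canonical ring of that klt pair, hence finitely generated by BCHM. For the converse direction, I would use the anticanonical model $\relProj\Rscr(X,-K_X)$ furnished by finite generation to promote $-K_X$ to a $\Q$-Cartier divisor on a small modification of $X$, and then read off the bound $K^-_{m,Y/X}>-1$ by comparison with the discrepancies of the klt boundary supplied by Theorem \ref{thm:dFH 7.2}.

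The main obstacle is the sheaf equality $\regsh_X(mK_X)=f_*\regsh_Y(m(K_Y+(1-\e)F_Y))$: without it, Proposition \ref{prop:sheaf criterion} alone only gives a one-sided comparison of $H^0(X,mK_X)$ with sections on the resolution, and finite generation cannot be transferred from the klt pair down to $X$.
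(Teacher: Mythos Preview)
The paper does not supply its own proof of this statement; it is simply quoted from \cite{ltsings}, 5.10, so there is no in-paper argument to compare against. I will therefore comment on the soundness of your sketch.

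Your argument for the finite generation of $\Rscr(X,K_X)$ is correct and is indeed the expected strategy. The ``main obstacle'' you flag is not actually an obstacle: the equality $\regsh_X(mK_X)=f_*\regsh_Y\bigl(m(K_Y+(1-\e)F_Y)\bigr)$ holds for divisible $m$ by exactly the Hartogs/reflexivity reasoning you indicate, since $m(1-\e)F_Y$ is effective and $f$-exceptional, so any section on $Y$ restricts to a section of $\regsh_X(mK_X)$ on the complement of $f(F_Y)$. Combined with the relative form of BCHM (finite generation of $\bigoplus_m f_*\regsh_Y(\lfloor m(K_Y+(1-\e)F_Y)\rfloor)$ as an $\regsh_X$-algebra for the log smooth klt pair $(Y,(1-\e)F_Y)$), this settles the first assertion. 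One small slip: you write the ring with global sections $H^0(Y,\cdot)$, but $\Rscr(X,K_X)$ is a sheaf of $\regsh_X$-algebras; the fix is to work locally on $X$ (assume $X$ affine) or to use the relative BCHM statement directly.

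Your treatment of the second assertion, however, has a genuine gap. The forward implication (klt $\Rightarrow$ $\Rscr(X,-K_X)$ finitely generated) cannot be obtained by the same ``identify a Veronese with the log canonical ring of $(Y,(1-\e)F_Y)$'' trick. Concretely, you already proved $f_*\regsh_Y\bigl(m(K_Y+(1-\e)F_Y)\bigr)=\regsh_X(mK_X)$; it therefore cannot also equal $\regsh_X(-mK_X)$. If one unwinds the proposed inclusion $\regsh_X(-mK_X)\cdot\regsh_Y\subseteq\regsh_Y\bigl(m(K_Y+(1-\e)F_Y)\bigr)$, it amounts to $-f^\natural(mK_X)\le m(K_Y+(1-\e)F_Y)$; since $-f^\natural(mK_X)=-mK_Y+mK^-_{m,Y/X}$, this forces $0\le 2f^{-1}_*K_X$ on the non-exceptional part, which is false in general. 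So the sheaf comparison you invoke simply does not hold, and $\Rscr(X,-K_X)$ is not a Veronese of the log canonical ring of $(Y,(1-\e)F_Y)$. A correct route (as in \cite{ltsings}) passes instead through a small modification on which $K_X$ becomes $\Q$-Cartier---e.g.\ a small $\Q$-factorialization of the klt pair $(X,\Delta)$ furnished by Theorem~\ref{thm:dFH 7.2}---and then appeals to finite generation for $\Q$-Cartier divisors on klt pairs over $X$. Your converse sketch points in this direction, but the forward direction needs to be rethought entirely.
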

\begin{ex}\label{ex:lt+,notCM} In \cite{stefano}, 4.1 the author gives an example of a variety having canonical (hence \ltp singularities) but not klt. The example is a cone over an embedding of $\Pspace^1\times\Escr$, where $\Escr$ is an elliptic curve.
\end{ex}
\subsection{Pullbacks and boundaries}
We start with the definition of \emph{compatible boundary} of \cite{dFH}.
\begin{df} Let $X$ be a normal variety, and let $f:Y\rightarrow X$ be a proper birational map, with $Y$ normal. Let $D$ be a Weil divisor on $X$. For each $m\geq2$ a divisor $\Delta_m$ is called a \emph{weak $m$-compatible $D$-boundary with respect to $f$} such that
\begin{enumerate}
\item $\Delta_m\geq0$;
\item $D+\Delta_m$ is $\Q$-Cartier;
\item $\lfloor\Delta_m\rfloor=0$ and $m\Delta_m$ is integral; and
\item $\frac{f^\natural(mD)}{m}=f^*(D+\Delta_m)-f^{-1}_*\Delta_m$.
\end{enumerate}
Moreover, if there exist a proper birational morphism $g:Z\rightarrow X$  and a $D$-boundary $\Delta_m$ as above such that
\begin{enumerate}
\setcounter{enumi}{4}
\item $g$ is a resolution of $X$, and
\item $\mathrm{exc}(g)\cup f^{-1}_*\Delta_m$ has simple normal crossing support,
\end{enumerate}
such divisor will be called an \emph{$m$-compatible $D$-boundary}.
\end{df}
\begin{rk} This definition is slightly different than the one in \cite{dFH}, 5.1 and \cite{ltsings}, 3.6.
\end{rk}
We have the following generalization of \cite{dFH}, 5.4.
\begin{lm}[\cite{dFH}, 5.4, \cite{ltsings}, 3.6]\label{lm:compatible D-boundary} For each proper birationational morphism $f:Y\rightarrow X$ between normal varieties, any $m\geq2$ and any divisor $D$ on $X$, there exists a weak $m$-compatible $D$-boundary with respect to $f$. Moreover, for any $m\geq2$, there are $m$-compatible $D$-boundaries (with respect to some resolution).
\end{lm}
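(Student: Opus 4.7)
My plan is to construct $\Delta_m$ as a rescaled general member of a suitably twisted base-point-free linear system, following the idea of \cite{dFH}, 5.4. Let $A$ be a very ample Cartier divisor on $X$ sufficiently positive that the reflexive sheaf $\regsh_X(A-mD)$ is globally generated. Pick a general section $s\in H^0(X,\regsh_X(A-mD))$ and let $D_s$ be its effective zero divisor, so $mD+D_s\sim A$ is Cartier. Set $\Delta_m:=D_s/m$. Properties (1), (2), and the integrality in (3) are then immediate; for $\lfloor\Delta_m\rfloor=0$ a standard Bertini argument gives that on the smooth locus of $X$ the general member of a base-point-free linear system is reduced, while the singular locus has codimension $\geq 2$ and contributes no divisorial components, so all coefficients of $D_s$ are at most $1<m$.

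The main step is condition (4), which after clearing $m$ reads
$$
f^\natural(mD)=f^*(mD+D_s)-f^{-1}_*D_s.
$$
The local computation is the following: since $mD+D_s$ is Cartier with local equation $\phi$, the identity $\regsh_X(-mD)=\phi\cdot\regsh_X(D_s)$ holds as fractional ideals on $X$, whence $\regsh_X(-mD)\cdot\regsh_Y=\phi\cdot(\regsh_X(D_s)\cdot\regsh_Y)$ and taking $\div$ yields $f^\natural(mD)=f^*(mD+D_s)+\div(\regsh_X(D_s)\cdot\regsh_Y)$. The task is thus reduced to showing $\div(\regsh_X(D_s)\cdot\regsh_Y)=-f^{-1}_*D_s$. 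The non-exceptional contribution is computed directly from the definition of $\val_E$ for $E=f^{-1}_*P$ with $P$ a prime on $X$, giving precisely $-f^{-1}_*D_s$. The vanishing of the exceptional contribution is where genericity is essential: each of the finitely many $f$-exceptional prime divisors has center of codimension $\geq 2$ on $X$, so a general $D_s$ from the base-point-free system $|A-mD|$ contains none of these centers, forcing $\val_E(\regsh_X(D_s))=0$ for every $f$-exceptional $E$.

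For the moreover part, apply the same construction with $f$ replaced by a log resolution $g:Z\rightarrow X$, increasing $A$ if necessary so that a general $D_s$ is also transverse to $\exc(g)$. Then $\exc(g)\cup g^{-1}_*\Delta_m$ has simple normal crossing support, as required. I expect the main obstacle to be the vanishing of the exceptional part of $\div(\regsh_X(D_s)\cdot\regsh_Y)$ in condition (4); the remaining items are routine applications of Bertini and Hironaka.
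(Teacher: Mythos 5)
Your construction of the weak $m$-compatible boundary is exactly the one the paper relies on (the lemma is not proved here but deferred to \cite{dFH}, 5.4 and \cite{ltsings}, 3.6, and the very same construction is repeated in the proof of lemma \ref{lm:relativecompatibleboundary}): take a Cartier divisor $A$ positive enough that $\regsh_X(A-mD)$ is globally generated, let $D_s$ be the divisor of a general section, and set $\Delta_m=D_s/m$. Your verification of condition (4) — reducing to $\div(\regsh_X(D_s)\cdot\regsh_Y)=-f^{-1}_*D_s$, computing the non-exceptional part at the generic points of prime divisors of $X$ (where $f$ is an isomorphism, by normality and Zariski's main theorem), and killing the exceptional part because a general member of a free system contains none of the finitely many centers $c_X(E)$ for $E$ an $f$-exceptional prime, while $1\in\regsh_X(D_s)$ gives $\val_E\leq0$ — is the same argument as in the cited proofs and is correct, as is the Bertini argument giving $\lfloor\Delta_m\rfloor=0$ since $m\geq2$.

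The one step that does not work as written is the ``moreover'' part. Increasing the positivity of $A$ cannot force the general strict transform $g^{-1}_*D_s$ to be transverse to $\exc(g)$ for an arbitrary log resolution $g:Z\rightarrow X$: since $\regsh_X(A-mD)=\regsh_X(-mD)\otimes\regsh_X(A)$ with $A$ Cartier, the inverse image $\regsh_X(A-mD)\cdot\regsh_Z$ differs from $\regsh_X(-mD)\cdot\regsh_Z$ only by an invertible twist, so whether the pulled-back linear system is free on $Z$ does not depend on $A$. If $\regsh_X(-mD)\cdot\regsh_Z$ is not invertible, every member of the system passes through the corresponding base locus inside $\exc(g)$, Bertini on $Z$ gives no control at those points, and you cannot conclude that $g^{-1}_*D_s$ is smooth or that $\exc(g)\cup g^{-1}_*\Delta_m$ has simple normal crossing support. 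The standard fix, which is what \cite{dFH}, 5.4 does and what the proof of lemma \ref{lm:relativecompatibleboundary} makes explicit (``if $f$ is a resolution of $\regsh_X(mK_X)+\regsh_X(-mD)$\dots''), is to choose $g$ to be a log resolution of $X$ together with the ideal $\regsh_X(-mD)$, so that $\regsh_X(A-mD)\cdot\regsh_Z$ is invertible; then the pulled-back system is base-point free on $Z$, its general member coincides with the strict transform $g^{-1}_*D_s$, and Bertini upstairs yields the required simple normal crossings. With that modification your proof is complete and coincides with the cited one.
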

In \cite{ltsings} this results is used to prove the following proposition.
\begin{cor}[\cite{ltsings}, 3.9]\label{cor:lt3.9} If $f:Y\rightarrow X$ is any proper birational morphism of normal varieties,
$$
K^+_{Y/X}=\inf_\Delta K^{-\Delta}_{Y/X},
$$
where the infimum is taken over all the weak $m$-compatible $(-K_X)$-boundaries with respect to $f$.
\end{cor}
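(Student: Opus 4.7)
The plan is to establish the pointwise identity $K^+_{m,Y/X} = K^{-\Delta}_{Y/X}$ for any weak $m$-compatible $(-K_X)$-boundary $\Delta$ with respect to $f$, and then pass to the limit in $m$. The key observation is that the defining property (4) of such a $\Delta$ expresses $\frac{1}{m}f^\natural(-mK_X)$ as a pullback of the $\Q$-Cartier divisor $-K_X+\Delta$ corrected by $f^{-1}_*\Delta$, which is exactly the data needed to convert an $m$-limiting relative canonical divisor into a log relative canonical divisor.

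For the identification, fix such a $\Delta$. Since $-K_X+\Delta$ is $\Q$-Cartier, the pullback is $\R$-linear on its multiples, so $f^*(K_X-\Delta) = -f^*(-K_X+\Delta)$. Applying property (4) of the compatible boundary,
$$K^{-\Delta}_{Y/X} = K_Y - f^{-1}_*\Delta - f^*(K_X-\Delta) = K_Y - f^{-1}_*\Delta + f^*(-K_X+\Delta) = K_Y + \tfrac{1}{m}f^\natural(-mK_X) = K^+_{m,Y/X},$$
where the two $f^{-1}_*\Delta$ terms cancel. Together with the chain of inequalities \eqref{eq:K-<=K^+}, this yields $K^+_{Y/X} \leq K^+_{m,Y/X} = K^{-\Delta}_{Y/X}$ for every such $\Delta$, and hence $K^+_{Y/X} \leq \inf_\Delta K^{-\Delta}_{Y/X}$.

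For the reverse inequality, Lemma \ref{lm:compatible D-boundary} guarantees, for each $m\geq2$, the existence of a weak $m$-compatible $(-K_X)$-boundary $\Delta_m$ with respect to $f$. Then $K^{-\Delta_m}_{Y/X} = K^+_{m,Y/X}$ lies in the family indexing the infimum, so $\inf_\Delta K^{-\Delta}_{Y/X} \leq K^+_{m,Y/X}$ for every $m$. Taking $\liminf$ coefficient-wise on the right and invoking \eqref{K-(Y/X)=limsupK-(m,Y/X)} gives $\inf_\Delta K^{-\Delta}_{Y/X} \leq K^+_{Y/X}$, completing the proof.

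The only real obstacle is bookkeeping of the sign conventions in the identification step: one must observe that the formal symbol $K^{-\Delta}_{Y/X}$ is well-defined because $K_X+(-\Delta)=K_X-\Delta$ is $\Q$-Cartier, and one must use the linearity $f^*(-C) = -f^*(C)$ only on $\Q$-Cartier inputs. Beyond this, the argument is a purely formal unwinding of definitions combined with Lemma \ref{lm:compatible D-boundary} and the $\liminf$ characterization of $K^+_{Y/X}$.
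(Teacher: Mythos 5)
Your proof is correct and takes essentially the same route as the paper: identify $K^{-\Delta}_{Y/X}=K^+_{m,Y/X}$ directly from property (d) of a weak $m$-compatible $(-K_X)$-boundary (using $\Q$-Cartierness of $-K_X+\Delta$ for the sign manipulation), then combine the existence statement of Lemma \ref{lm:compatible D-boundary} with the coefficient-wise limit description of $K^+_{Y/X}$. This is precisely the mechanism the paper uses for its more general pullback-and-boundary corollary and again in the Bertini argument, so there is nothing to add beyond your bookkeeping remark.
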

More generally,
\begin{cor}\label{cor:pullback and compatible $D$-boundary} Let $X$ be a normal variety, and let $f:Y\rightarrow X$ be a proper birational map, with $Y$ normal. Let $D$ be a Weil divisor on $X$. Let
$$
\mathscr{D}_D=\{\Delta\geq0,\textrm{$D+\Delta$ is $\Q$-Cartier},\lfloor\Delta\rfloor=0\}.
$$
Then
\begin{eqnarray}
f^*(D)=\inf_{\Delta\in\mathscr{D}_D}(f^*(D+\Delta)-f^{-1}_*\Delta),
\end{eqnarray}
coefficient-wise.
\end{cor}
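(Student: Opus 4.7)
The plan is to mimic the proof of Corollary \ref{cor:lt3.9}, which is the case $D=-K_X$, and establish the two coefficient-wise inequalities separately. The upper bound ($\leq$) is obtained by exhibiting specific elements of $\mathscr{D}_D$ on which $f^*(D+\Delta)-f^{-1}_*\Delta$ coincides with $f^\natural(mD)/m$; the lower bound ($\geq$) is obtained by a direct valuation-theoretic argument, case-by-case on whether a prime divisor of $Y$ is exceptional or not.

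For the inequality $\leq$, I would apply Lemma \ref{lm:compatible D-boundary} to produce, for every integer $m\geq 2$, a weak $m$-compatible $D$-boundary $\Delta_m\in\mathscr{D}_D$ satisfying
$$
\frac{f^\natural(mD)}{m}=f^*(D+\Delta_m)-f^{-1}_*\Delta_m.
$$
Since each such $\Delta_m$ belongs to $\mathscr{D}_D$, the coefficient-wise infimum over $\mathscr{D}_D$ is at most $f^\natural(mD)/m$ for every $m\geq 2$. Taking the infimum over $m$ on the right and invoking \cite{dFH}, 2.8 to identify this infimum with $\liminf_m f^\natural(mD)/m=f^*(D)$ gives the desired bound coefficient-wise.

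For the inequality $\geq$, I would fix $\Delta\in\mathscr{D}_D$, choose $m$ large enough that $m(D+\Delta)$ is Cartier and $m\Delta$ is integral, and compare coefficients $\ord_F(\cdot)$ at each prime divisor $F$ on $Y$. If $F$ is non-exceptional, corresponding to a prime $F'\subset X$, a local computation at the DVR $\regsh_{X,F'}$ shows $\val_F(D)=\ord_{F'}(D)$ and $\val_F(D+\Delta)=\ord_{F'}(D)+\ord_{F'}(\Delta)$, while $\ord_F(f^{-1}_*\Delta)=\ord_{F'}(\Delta)$, so equality holds. If $F$ is exceptional, then $\ord_F(f^{-1}_*\Delta)=0$, and I only need $\val_F(D+\Delta)\geq\val_F(D)$. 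This reduces to the sheaf inclusion $\regsh_X(-m(D+\Delta))\subseteq\regsh_X(-mD)$, which holds precisely because $\Delta\geq0$ and $m\Delta$ is an effective integral divisor; smaller fractional ideals have larger valuation, so $\val_F^\natural(m(D+\Delta))\geq\val_F^\natural(mD)$, and dividing by $m$ and using the $\inf=\liminf$ property from \cite{dFH}, 2.8 yields
$$
\val_F(D+\Delta)=\frac{\val_F^\natural(m(D+\Delta))}{m}\geq\frac{\val_F^\natural(mD)}{m}\geq\val_F(D).
$$
The delicate point, and the main obstacle, is recognizing that this chain of inequalities truly relies on the effectiveness and integrality assumptions on $m\Delta$: these are what legitimize the sheaf inclusion in the absence of the naive identity $f^*(D+\Delta)=f^*(D)+f^*(\Delta)$, which fails because $\Delta$ itself need not be $\Q$-Cartier.
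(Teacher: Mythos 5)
Your proof is correct and follows essentially the same route as the paper: the inequality $\inf_{\Delta\in\mathscr{D}_D}(f^*(D+\Delta)-f^{-1}_*\Delta)\leq f^*(D)$ comes, exactly as in the paper, from the weak $m$-compatible $D$-boundaries of Lemma \ref{lm:compatible D-boundary}, and the reverse inequality from the effectivity of $\Delta$. The only difference is that where the paper invokes remark \ref{rk:dFH 3.9} (the computation of \cite{dFH}, 3.9, which yields the slightly stronger bound $f^\natural(mD)/m\leq f^*(D+\Delta)-f^{-1}_*\Delta$ whenever $m(D+\Delta)$ is Cartier), you verify the needed inequality coefficient-wise, using $\nu(D+\Delta)=\nu^\natural(m(D+\Delta))/m\geq\nu^\natural(mD)/m\geq\nu(D)$ from the inclusion $\regsh_X(-m(D+\Delta))\subseteq\regsh_X(-mD)$ on exceptional divisors and the DVR computation on non-exceptional ones, which is the same underlying fact unpacked by hand.
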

\begin{proof} By definition,
$$
f^*(D)=\inf_m \frac{f^\natural(mD)}{m},
$$
coefficient-wise. By lemma \ref{lm:compatible D-boundary}, the left hand side is bigger or equal than the right one, since
\begin{eqnarray}
f^*(D)=\inf_m \frac{f^\natural(mD)}{m}=\inf_m(f^*(D+\Delta_m)-f^{-1}_*\Delta_m).
\end{eqnarray}
To finish the proof, let $\Delta\in\mathscr{D}_D$ such that $m(D+\Delta)$ is Cartier. As in remark \ref{rk:dFH 3.9} for the particular case of the relative canonical divisor,
$$
\frac{f^\natural(mD)}{m}\leq f^*(D+\Delta)-f^{-1}_*\Delta.
$$
\end{proof}
\begin{cor}\label{cor:pullback and restriction} Let $f:Y\rightarrow X$ be a proper birational morphism between normal varieties, with $Y$ smooth. Let $D$ be a Weil divisor on $X$, and let $E$ be any normal component of the exceptional divisor which is also a fiber (if it exists). The restriction of $f^*(D)$ to $E$ is numerically antieffective.
\end{cor}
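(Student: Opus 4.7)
By Lemma \ref{lm:compatible D-boundary}, for each $m\geq 2$ we may choose a weak $m$-compatible $D$-boundary $\Delta_m\geq 0$ with $D+\Delta_m$ $\Q$-Cartier, so that
\begin{equation*}
\frac{f^\natural(mD)}{m}=f^*(D+\Delta_m)-f^{-1}_*\Delta_m.
\end{equation*}
The plan is to restrict this identity to $E$ for each $m$, use that $f^*$ of a $\Q$-Cartier divisor is numerically trivial on a fiber (so the restriction is minus an effective divisor), and then pass to the limit $m\to\infty$.

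Since $Y$ is smooth, every $\R$-Weil divisor on $Y$ is $\R$-Cartier, and restriction to $E$ gives a well-defined $\R$-linear map from $\R$-divisor classes on $Y$ to $N^1(E)_\R$. Because $D+\Delta_m$ is $\Q$-Cartier and $E$ is contained in a fiber of $f$, the pullback $f^*(D+\Delta_m)$ restricts numerically to zero on $E$. Moreover, $E$ is $f$-exceptional, hence not a component of the strict transform $f^{-1}_*\Delta_m$, so $f^{-1}_*\Delta_m|_E$ is an honest effective $\R$-Cartier divisor on $E$. Restricting the identity displayed above, I would therefore obtain
\begin{equation*}
\left.\frac{f^\natural(mD)}{m}\right|_E\equiv -\,f^{-1}_*\Delta_m|_E\qquad\text{in }N^1(E)_\R,
\end{equation*}
and the right-hand side is antieffective for every $m$.

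The final step is to pass to the limit in $m$. All the divisors $f^\natural(mD)/m$ and $f^*(D)$ are supported in a fixed finite set of prime divisors on $Y$ (the components of the strict transform of $D$ together with the finitely many $f$-exceptional prime divisors), so they live in a common finite-dimensional subspace $V\subset\mathrm{Div}_\R(Y)$. By the very definition of $f^*(D)$, the coefficients $\nu^\natural_F(k!D)/k!$ converge to $\nu_F(D)$ for each $F$, so $f^\natural(k!D)/k!\to f^*(D)$ in $V$; applying the continuous restriction map $V\to N^1(E)_\R$ yields $(f^\natural(k!D)/k!)|_E\to f^*(D)|_E$. Since each element of the sequence is antieffective and the antieffective cone in $N^1(E)_\R$ is closed (it is the negative of the closed pseudo-effective cone), the limit $f^*(D)|_E$ is numerically antieffective.

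\textbf{Main obstacle.} The delicate point to watch is that $E$ itself may appear with positive coefficient in $f^*(D)$ or in the approximants $f^\natural(mD)/m$, so one should \emph{not} read ``$|_E$'' as the restriction of an actual divisor on $Y$ to a divisor on $E$; this is resolved by working systematically at the level of $\R$-Cartier classes in $N^1(E)_\R$, which is legitimate because $Y$ is smooth, and by noting that the restriction map is $\R$-linear and continuous on the finite-dimensional space $V$ containing all of our approximants and their limit.
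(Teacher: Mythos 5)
Your proposal is correct and follows essentially the same route as the paper: both invoke Lemma \ref{lm:compatible D-boundary} to write $f^\natural(mD)/m=f^*(D+\Delta_m)-f^{-1}_*\Delta_m$, restrict to the fiber $E$ where the $\Q$-Cartier pullback is trivial and $-f^{-1}_*\Delta_m\big|_E$ is antieffective (as $E$ is not a component of the strict transform), and then pass to the limit. Your treatment of the limiting step is merely more explicit than the paper's one-line ``taking the limit'' (coefficient-wise convergence in a finite-dimensional space of divisors on $Y$, continuity of restriction since $Y$ is smooth, and closedness of the pseudo-effective cone of $E$), which is a welcome clarification of exactly the point raised in (F\ref{it:F1}).
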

\begin{proof} By the above lemma \ref{lm:compatible D-boundary},
$$
f^*(D)\big|_E=\Big(\inf_m(f^*(D+\Delta_m)-f^{-1}_*\Delta_m)\Big)\Big|_E,
$$
where $\Delta_m$ are weak $m$-compatible $D$-boundaries, which are effective by construction. For each $m$,
$$
(f^*(D+\Delta_m)-f^{-1}_*\Delta_m)\big|_E\sim_\Q-f^{-1}_*\Delta_m\big|_E.
$$
Since $\Delta_m\geq0$ and $E$ is not contained in the support of $f^{-1}_*\Delta_m$, $-f^{-1}_*\Delta_m\big|_E\leq0$. Taking the limit, we get the desired result.
\end{proof}
\begin{rk} As discussed in (F\ref{it:F3}), we do not know that the restriction to a fiber of $f^*(D)$ will be $\R$-linearly equivalent to an antieffective divisor.
\end{rk}
Notice that we have the following example.
\begin{ex}[\cite{dFH}, 2.3] Let $X\subseteq\Aspace^3$ be the cone over a conic, $X=\{x^2+y^2=z^2\}$, let $f:Y\rightarrow X$ be the blow-up of the vertex, with exceptional divisor $E$. Let $L$ be a line passing thorugh the origin. Notice that $L$ is $\Q$-Cartier, but not Cartier. In this case $f^\natural(L)=f^{-1}_*L+E$, so that $f^\natural(L)\big|_E\sim_\Q-pt$, which is not zero (even though $L$ is $\Q$-Cartier).
\end{ex}
\begin{rk}\label{rk:positivity on fibers} A word of caution is required. The behavior in general of the restriction of the pullback of a Weil divisor to a fiber is extremely unclear. Even assuming that we are working in the case of a resolution (restricting Weil divisors is not necessary possible), the positivity of the restriction is not a given. In the previous corollary \ref{cor:pullback and restriction}, for example, a key point is that $E$ being a divisor, it cannot be contained in the support of $f^{-1}_*\Delta_m$. In this particular case we see that the restriction of the fibers of the pullback is ``negative''. More evidence of this is given by the construction of the \emph{nef envelope} by \cite{BdFF} (we refer to the article for the definitions). Roughly speaking, to an $\R$-Weil divisor $D$, they associate a $b$-divisor $\mathrm{Env}_X(D)$, which is nef over $X$ and whose determination on a model $\pi:X_\pi\rightarrow X$ corresponds to $-\pi^\natural(-D)$. However, this ``negativity'' of the pullback does not happen when we consider the restriction to fibers which are not themselves divisors. Let $D$ be a Weil divisor on $X$ such that $\Rscr(X,D)$ is finitely generated, and let $f:Y=\boldsymbol{\mathrm{Proj}}_X\Rscr(X,D)\rightarrow X$. This is the situation of flipping contractions, for example. In this case $f$ is small and $f^{-1}_*(D)=f^\natural(D)=f^*(D)$ is $\Q$-Cartier and $f$-ample. Thus, for any curve $C$ in the fiber of $f$, $f^*(D).C>0$.
\end{rk}

There is another generalization of \cite{dFH}, 5.4. This is very similar to \cite{trans}, 2.9, but there is done for $K_X$ and $K_S$.
\begin{lm}\label{lm:relativecompatibleboundary} Let $X$ be a normal variety and let $S\subseteq X$ be a normal (effective) Cartier divisor in $X$, and let $f:Y\rightarrow X$ be a proper birational morphism. Then there exists a weak $m$-compatible $(-K_X)$-boundary $\Delta$ on $X$ such that $\Delta\big|_S$ is a weak $m$-compatible $(-K_S)$-boundary. Moreover, $f$ and $\Delta$ can be chosen so that $\Delta$ and $\Delta\big|_S$ are $m$-compatible.
\end{lm}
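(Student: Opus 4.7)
The plan is to adapt the construction of weak compatible boundaries from \cite{dFH}, 5.4 (generalised in \cite{ltsings}, 3.6, and carried out for $K_X,K_S$ in \cite{trans}, 2.9), arranging that a single general global section of a sufficiently positive reflexive sheaf on $X$ cuts out both $\Delta$ and $\Delta|_S$ simultaneously. The bridge between $X$ and $S$ is the adjunction identity $K_S=(K_X+S)|_S$, which, since $S$ is Cartier, converts the linear system producing a boundary for $-K_X$ on $X$ into one producing a boundary for $-K_S$ on $S$.

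Fix an ample Cartier divisor $H$ on $X$ and put $A:=m_0H$ for $m_0\gg 0$, chosen so that: (i) the reflexive sheaves $\regsh_X(mK_X+mA)$ and $\regsh_S(mK_S+m(A-S)|_S)$ are globally generated in codimension one; (ii) the restriction map $H^0(X,\regsh_X(mK_X+mA))\twoheadrightarrow H^0(S,\regsh_S(mK_S+m(A-S)|_S))$ is surjective (by Serre vanishing of the $H^1$ of the twisting kernel, after enlarging $m_0$); and (iii) on $Y$ the fractional ideal $\regsh_X(mK_X)\cdot\regsh_Y$ twisted by $f^*\regsh_X(mA)$, and its analogue on the strict transform $\tilde S\subset Y$, are globally generated by sections pulled back from $X$ and $S$ respectively. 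Adjunction identifies $m(K_X+A)|_S$ with $mK_S+m(A-S)|_S$, so restriction of sections lands in the intended linear system on $S$.

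Take a general $s\in H^0(X,\regsh_X(mK_X+mA))$ with zero divisor $B$ and set $\Delta:=B/m$. Bertini applied on the smooth locus of $X$, and carried to $S$ via the surjectivity in (ii), gives that $B$ avoids $S$, both $B$ and $B|_S$ are reduced, and $-mK_X+m\Delta\sim mA$ and $-mK_S+m(\Delta|_S)\sim m(A-S)|_S$ are Cartier. This yields conditions (1)--(3) simultaneously for $\Delta$ on $X$ and for $\Delta|_S$ on $S$. Condition (4) for $\Delta$ on $X$ follows from (iii) exactly as in \cite{dFH}, 5.4: global generation of the twisted fractional ideal on $Y$ forces the divisor of the pulled-back section to be $f^\natural(-mK_X)+mf^{-1}_*\Delta$, giving $f^\natural(-mK_X)/m=f^*(-K_X+\Delta)-f^{-1}_*\Delta$. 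The same argument run on $\tilde S$, with $S$ and the analogue of (iii) in place of $X$ and (iii), yields condition (4) for $\Delta|_S$ with respect to the induced morphism from (the normalization of) $\tilde S$ to $S$.

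For the ``moreover'' part, replace $f$ by a log resolution $g:Z\to X$ dominating $f$ that also resolves the pair $(X,S+\Delta)$ and makes the strict transform $\tilde S_Z$ of $S$ smooth; such a $g$ exists by Hironaka, and it automatically handles the normality worry mentioned above. Then $\mathrm{exc}(g)\cup g^{-1}_*\Delta$ is SNC on $Z$, and its restriction to $\tilde S_Z$ equals $\mathrm{exc}(g|_{\tilde S_Z})\cup(g|_{\tilde S_Z})^{-1}_*(\Delta|_S)$, again SNC. The chief obstacle throughout is item (iii): the ideal $\regsh_X(-mK_X)\cdot\regsh_Y$ does not a priori restrict to $\regsh_S(-mK_S)\cdot\regsh_{\tilde S}$, so the delicate step is to verify, using the Cartier condition and normality of $S$, that sections drawn from the common pool $H^0(X,\regsh_X(mK_X+mA))$ descend to generate both sheaves after enlarging $m_0$ as needed; this is the positivity/vanishing input that keeps the construction for $X$ and $S$ consistent.
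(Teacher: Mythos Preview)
Your overall strategy matches the paper's: both adapt the construction of \cite{dFH}, 5.4 (and \cite{trans}, 2.9) so that one general section produces compatible boundaries on $X$ and on $S$ simultaneously, with adjunction $(K_X+S)|_S=K_S$ as the link. However, your write-up leaves open precisely the point you flag as ``the chief obstacle'': you assert that, after enlarging $m_0$, sections of $\regsh_X(mK_X+mA)$ restrict to sections of $\regsh_S(mK_S+m(A-S)|_S)$ and generate the relevant fractional ideals on both $Y$ and $\tilde S$, but you do not prove it. The difficulty is real: $\regsh_X(mK_X)$ is only reflexive, so there is no exact sequence $0\to\regsh_X(mK_X+mA-S)\to\regsh_X(mK_X+mA)\to\regsh_S(mK_S+m(A-S)|_S)\to 0$ to which Serre vanishing applies, and the identification of $\regsh_X(mK_X+mA)|_S$ with the desired sheaf on $S$ is not automatic. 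Without this, your conditions (ii) and (iii) are hypotheses, not consequences of ampleness.

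The paper closes this gap by a simple device you omit: it first fixes an \emph{effective} Weil divisor $D$ (not containing $S$) with $-K_X-D$ Cartier. Then $\regsh_X(mK_X)$ differs from the honest ideal sheaf $\regsh_X(-mD)$ by a line bundle, and by adjunction $-K_S-D|_S$ is Cartier with $D|_S\geq 0$. One now chooses a line bundle $\Lscr$ so that both $\Lscr\otimes\regsh_X(-mD)$ and $\Lscr|_S\otimes\regsh_S(-mD|_S)$ are globally generated, picks a general $G\in\{H\in|\Lscr|:H\geq mD\}$, and sets $\Delta_m=(G-mD)/m$. Because everything is expressed through the effective divisor $D$ and line bundles, restriction to $S$ is transparent, and conditions (a)--(d) on both $X$ and $S$ follow exactly as in \cite{dFH}, 5.4. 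The ``moreover'' clause is obtained by taking $f$ to resolve $\regsh_X(mK_X)+\regsh_X(-mD)$ so that $f|_T$ resolves the analogous ideals on $S$, rather than by passing to a further log resolution of $(X,S+\Delta)$ after the fact. In short: your plan is correct, but the auxiliary effective $D$ is the missing ingredient that makes the restriction step rigorous.
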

\begin{proof} Let us fix some notation. Let $T:=f^{-1}_*S$ and let $g:=f\big|_T:T\rightarrow S$, which is still proper birational.

Let $D$ be an effective divisor such that $-K_X-D$ is Cartier. By adjunction, $(-K_X-S-D)\big|_S=-K_S-D\big|_S$ is still Cartier. Let $\Lscr$ be an line bundle such that $\Lscr\otimes\regsh_X(-mD)$ and $\Lscr\big|_S\otimes\regsh_S(-mD\big|_S)$ are generated by global sections, and let $G$ be a general element in the linear system $\{H\in|\Lscr|\,, H-mD\geq0\}$, which we can assume reduced and having no common components with $D$ and $S$. Let $M=G-mD$ and
$$
\Delta_m=\frac{M}{m}.
$$
As in the proofs of \cite{dFH}, 5.4 and \cite{trans}, 2.9, the generality of the choice of $G$ guarantees that $\Delta_m$ is a weak $m$-compatible $(-K_X)$-boundary with respect to $f$, and that $\Delta_m\big|_S$ is a weak $m$-compatible $(-K_S)$-boundary with respect to $g$.

If $f$ is a resolution of $\regsh_X(mK_X)+\regsh_X(-mD)$ such that $g:T\rightarrow S$ is a resolution of $\regsh_S(mK_S)+\regsh_S(-mD\big|_S)$, then $\Delta_m$ can be chosen general enough so that $\exc(f)\cup f^{-1}_*\Delta_m$ and $\exc(g)\cup g^{-1}_*(\Delta_m\big|_S)$ have simple normal crossing support.
\end{proof}

\section{Some properties of \ltp singularities}
\subsection{Multiplier ideal}
We will show the existence of a multiplier ideal which detects \ltp singularities.
\begin{df}\label{df:multideal} Let $X$ be a normal variety, and let $f:Y\rightarrow X$ be a log resolution of $X$. We can define the ideal
\begin{eqnarray}
\Jscr^+(X):=f_*\regsh_Y\big(\lceil K_Y+f^*(-K_X)\rceil\big).
\end{eqnarray}
\end{df}
\begin{lm}\label{lm:multideal} The above ideal is independent of the resolution.
\end{lm}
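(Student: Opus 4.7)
The plan is to reduce the statement to the classical independence of the log resolution for the multiplier ideal of a pair, using the compatible boundary technique of Corollary \ref{cor:lt3.9}. The key observation is that, although $K^+_{Y/X}$ is only a $\liminf$, its ceiling already agrees with that of the genuine $\Q$-divisor $K^+_{m,Y/X}$ as soon as $m$ is sufficiently divisible, which turns the problem into one about a single $\Q$-divisor. A suitable compatible boundary then rewrites this $\Q$-divisor as the log discrepancy of an honest pair, where classical theory takes over.

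First I would prove stabilization of the ceiling. By \eqref{eq:K-<=K^+} and \eqref{K-(Y/X)=limsupK-(m,Y/X)}, on a fixed log resolution $f\colon Y\to X$ the $\Q$-divisors $K^+_{m,Y/X}$ form a non-increasing sequence in the divisibility order, converging coefficient-wise to $K^+_{Y/X}$, and their supports are contained in a finite set of prime divisors on $Y$. Hence for $m$ sufficiently divisible,
$$
\lceil K^+_{m,Y/X}\rceil = \lceil K^+_{Y/X}\rceil,
$$
so that $\Jscr^+(X)=f_*\regsh_Y\bigl(\lceil K^+_{m,Y/X}\rceil\bigr)$ for such $m$.

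Next, given two log resolutions $f\colon Y\to X$ and $f'\colon Y'\to X$, I would dominate both by a log resolution $h\colon Z\to X$ via $g\colon Z\to Y$ and $g'\colon Z\to Y'$, and pick $m$ divisible enough for the ceiling to stabilize simultaneously on $Y$, $Y'$, $Z$. By Lemma \ref{lm:compatible D-boundary} choose an $m$-compatible $(-K_X)$-boundary $\Delta_m$ with respect to $h$, taken general enough in its linear system that $f$, $f'$, $h$ are also log resolutions of $(X,\Delta_m)$. Pushing the compatibility identity $h^\natural(-mK_X)/m = h^*(-K_X+\Delta_m) - h^{-1}_*\Delta_m$ forward via $g_*$ (and $g'_*$), and using Lemma \ref{lm:(fg)*-g*f*} (which makes $h^\natural(-mK_X)-g^*f^\natural(-mK_X)$ effective and $g$-exceptional, hence annihilated by $g_*$), one checks that $\Delta_m$ is also a weak $m$-compatible $(-K_X)$-boundary with respect to $f$ and $f'$. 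For every $p\colon W\to X$ in $\{f,f',h\}$ we then have
$$
K^+_{m,W/X} = K_W - p^*(K_X-\Delta_m) - p^{-1}_*\Delta_m = K^{-\Delta_m}_{W/X},
$$
with $K_X-\Delta_m$ a $\Q$-Cartier divisor. Combining with the stabilization step,
$$
p_*\regsh_W\bigl(\lceil K^+_{W/X}\rceil\bigr) = p_*\regsh_W\bigl(\lceil K^{-\Delta_m}_{W/X}\rceil\bigr) = \Jscr(X,-\Delta_m),
$$
the classical multiplier ideal of the pair $(X,-\Delta_m)$. Its independence of the log resolution is the standard argument (dominate two log resolutions by a common one and use that the relative canonical divisor between smooth varieties is effective and exceptional, together with the projection formula); it needs only that $K_X+(-\Delta_m)$ be $\Q$-Cartier, so it applies verbatim even though $-\Delta_m$ is not effective. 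Hence $\Jscr^+(X)$ computed via $f$, $f'$, $h$ all coincide.

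The main obstacle is the interplay between $\liminf$ and $\lceil\cdot\rceil$ in the definition of $\Jscr^+(X)$: since neither operation commutes with the other, one cannot directly compare the ceilings on two different resolutions. The stabilization step sidesteps this by replacing the $\R$-divisor $K^+_{W/X}$ with the $\Q$-divisor $K^+_{m,W/X}$, which a compatible boundary then identifies with the log discrepancy of a genuine pair. A secondary technicality — arranging a single $\Delta_m$ compatible with two distinct log resolutions — is handled by choosing $\Delta_m$ on a common dominating resolution and pushing down via Lemma \ref{lm:(fg)*-g*f*}.
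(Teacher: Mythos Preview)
Your stabilization step is false, and the paper itself supplies the counterexample. You claim that because $K^+_{m,Y/X}$ is non-increasing (in the divisibility order) and converges to $K^+_{Y/X}$, the ceilings eventually agree. This fails precisely when a coefficient of $K^+_{Y/X}$ is an integer that is approached strictly from above: if $\ord_E K^+_{Y/X}=n\in\Z$ but $\ord_E K^+_{m,Y/X}>n$ for every $m$, then $\lceil\ord_E K^+_{m,Y/X}\rceil\geq n+1$ for all $m$, while $\lceil\ord_E K^+_{Y/X}\rceil=n$. Example~\ref{ex:Km>-1,K=-1} realizes this exactly (with $n=-1$): on the blow-up of the vertex one has $K^+_{m,Y/X}>-1$ for all $m$ yet $K^+_{Y/X}=-1$. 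Remark~\ref{rk:multideal} spells out the consequence: with your approach every $f_*\regsh_Y(\lceil K^+_{m,Y/X}\rceil)=\Jscr(X,-\Delta_m)$ equals $\regsh_X$, whereas $\Jscr^+(X)\neq\regsh_X$. So reducing to the classical multiplier ideal of a pair via $K^+_{m,Y/X}$ cannot compute $\Jscr^+(X)$ in general, and the very obstacle you identify (non-commutation of $\liminf$ and $\lceil\cdot\rceil$) is what breaks your argument, not what it sidesteps.

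The paper's proof avoids this by never passing through $K^+_{m,Y/X}$. It works directly with the $\R$-divisor $f^*(-K_X)$: for a factorization $h=f\circ g$ one has that $E^+_g:=h^*(-K_X)-g^*f^*(-K_X)$ is effective and $g$-exceptional (\cite{dFH}, 2.13), and then the projection formula plus the snc hypothesis on $\supp f^*(-K_X)$ reduce the comparison to the standard identity $g_*\regsh_W(K_{W/Y}-\lfloor g^*B\rfloor)=\regsh_Y(-\lfloor B\rfloor)$ for an snc $\R$-divisor $B$ on $Y$ (\cite{laz}, II.9.2.19). If you want to salvage a boundary-based approach, you would need to control the integer-limit case separately; otherwise, the direct argument with $f^*(-K_X)$ is the correct route.
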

\begin{proof} If is enough to prove this result for two log resolutions $f:Y\rightarrow X$ and $h:W\rightarrow X$ with $h=f\circ g$, for some $g:W\rightarrow Y$. It is known that $K^+_{W/X}-g^*K^+_{Y/X}$ is effective and $g$-exceptional \cite{dFH}, 2.13; thus, the result follows as in the usual setting of multiplier ideals.

Let $h^*(-K_X)-g^*f^*(-K_X)=E^+_g$, which is effective and $g$-exceptional \cite{dFH}, 2.13. We have the equalities
\begin{eqnarray*}
h_*\regsh_W\big(\lceil K_W+h^*(-K_X)\rceil\big)&=&h_*\regsh_W\big( K_{W/Y}+g^*(K_Y)+\lceil h^*(-K_X)\rceil\big)=\\
&=&f_*g_*\regsh_W\big(K_{W/Y}+\lceil h^*(-K_X)\rceil+g^*(K_Y)\big)=\\
&=&f_*\Big(g_*\regsh_W\big(K_{W/Y}+\lceil h^*(-K_X)\rceil\big)\otimes\regsh_Y(K_Y)\Big)
\end{eqnarray*}
by the projection formula. We are done if we prove that
$$
g_*\regsh_W\big(K_{W/Y}+\lceil h^*(-K_X)\rceil\big)=\regsh_Y(\lceil f^*(-K_X)\rceil).
$$
Notice that we reduced to considering a map between smooth varieties and $\supp f^*(-K_X)$ is snc. Hence,
\begin{eqnarray*}
g_*\regsh_W\big(K_{W/Y}+\lceil h^*(-K_X)\rceil\big)&=&g_*\regsh_W\big(K_{W/Y}+\lceil g^*f^*(-K_X)+E^+_g\rceil\big)=\\
&=&g_*\regsh_W\big(K_{W/Y}+\lceil g^*f^*(-K_X)\rceil+F^+_g\big),
\end{eqnarray*}
where $F^+_g$ is effective and $g$-exceptional. Indeed, since $E^+_g$ is effective and $g$-exceptional, $\lceil g^*f^*(-K_X)+E^+_g\rceil\geq\lceil g^*f^*(-K_X)\rceil$ and the difference is $g$-exceptional. Thus,
\begin{eqnarray*}
g_*\regsh_W\big(K_{W/Y}+\lceil h^*(-K_X)\rceil\big)&=&g_*\regsh_W\big(K_{W/Y}+\lceil g^*f^*(-K_X)\rceil+F^+_g\big)=\\
&=&g_*\regsh_W\big(K_{W/Y}+F^+_g-\lfloor- g^*f^*(-K_X)\rfloor\big)=\\
&=&g_*\regsh_W\big(K_{W/Y}+F^+_g-\lfloor g^*(-f^*(-K_X))\rfloor\big)=\\
&=&\regsh_Y(-\lfloor -f^*(-K_X)\rfloor)=\\
&=&\regsh_Y(\lceil f^*(-K_X)\rceil)
\end{eqnarray*}
by \cite{laz}, II.9.2.19.
\end{proof}
\begin{cor}\label{cor:multideal} A normal variety $X$ has \ltp singularities if and only if $\Jscr^+(X)=\regsh_X$.
\end{cor}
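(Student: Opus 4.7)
The plan is to unwind $\Jscr^+(X)=\regsh_X$ into the effectivity of $\lceil K^+_{Y/X}\rceil$ on a log resolution, and then identify the latter with the \ltp condition $a^+(E,X)>-1$. Fixing a single log resolution $f:Y\to X$ is legitimate, since $\Jscr^+(X)$ is independent of the choice by Lemma \ref{lm:multideal}.

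The preliminary observation to make is that $K^+_{Y/X}=K_Y+f^*(-K_X)$ is $f$-exceptional. If $D\subset Y$ is any prime divisor which is not $f$-exceptional, then $f$ is an isomorphism at the generic point of $D$, so the coefficient of $D$ in $f^\natural(-mK_X)$ equals $-m\,\ord_{f(D)}(K_X)$ for every $m$; dividing by $m$ and passing to the limit shows that the non-exceptional part of $f^*(-K_X)$ is $-f^{-1}_*K_X$, which cancels the non-exceptional part $f^{-1}_*K_X$ of $K_Y$. Since $K^+_{Y/X}$ is exceptional, $\lceil K^+_{Y/X}\rceil$ is effective iff $\ord_E(K^+_{Y/X})=a^+(E,X)>-1$ for every prime exceptional $E$ on $Y$, which by Lemma \ref{lm:ltwithoneresolution} is equivalent to $X$ being \ltp.

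It remains to check the sheaf-theoretic equivalence $\lceil K^+_{Y/X}\rceil\geq 0 \iff \Jscr^+(X)=\regsh_X$. If $D:=\lceil K^+_{Y/X}\rceil$ is effective and exceptional, then $f_*\regsh_Y(D)=\regsh_X$, because sections of $\regsh_Y(D)$ are rational functions on $Y$ regular outside $\supp D$, pushing forward to rational functions on $X$ regular outside a codimension-$2$ locus and hence regular by normality. Conversely, writing $\lceil K^+_{Y/X}\rceil=A-B$ with $A,B\geq 0$ effective exceptional sharing no components and $B\neq 0$, I would pick a prime component $E_0\subseteq\supp B$ with integer coefficient $b_0\geq 1$; any section of $f_*\regsh_Y(A-B)$ over an open $U\supseteq f(E_0)$ must vanish along $E_0$ to order at least $b_0$, so the constant function $1$ lies in $\regsh_X(U)$ but not in $\Jscr^+(X)(U)$, forcing $\Jscr^+(X)\neq\regsh_X$. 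The single point demanding any care is the $f$-exceptionality of $K^+_{Y/X}$, which rests on the coefficient-wise definition of the non-$\Q$-Cartier pullback at generic points where $f$ is an isomorphism; once this is settled, everything else is formal, and no vanishing theorem is required.
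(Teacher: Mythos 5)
Your proof is correct and is essentially the argument the paper leaves implicit (the corollary is stated without proof immediately after Lemma \ref{lm:multideal}): the exceptionality of $K^+_{Y/X}$, the identification of $\lceil K^+_{Y/X}\rceil\geq 0$ with $a^+(E,X)>-1$ on a single log resolution via Lemma \ref{lm:ltwithoneresolution}, and the elementary pushforward computation using normality are exactly the intended steps. The only detail you gloss over, namely $\regsh_X=f_*\regsh_Y\subseteq f_*\regsh_Y(\lceil K^+_{Y/X}\rceil)$ when the ceiling is effective, is immediate, so there is no gap.
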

\begin{rk}\label{rk:multideal} As shown in lemma \ref{lm:compatible D-boundary}, for each $m\geq2$, we can find an anti-effective compatible boundary $\Delta_m$ on a log resolution $f:Y\rightarrow X$ of $(X,\Delta_m)$ such that $K^+_{m,Y/X}=K^{\Delta_m}_{Y/X}$. So, a priori, we could construct multiplier fractional ideals for each $m$ for the pair $(X,\Delta_m)$ and then do a limiting process on these multiplier ideals, as it was done in \cite{dFH}, 5.5, to construct multiplier ideals for the log terminal singularities. However, there are two issues with this approach. The first one is that, since $K^+_{mq,Y/X}\leq K^+_{m,Y/X}$, such limiting multiplier ideal should be a minimal element, and not a maximal one (in the collection of multiplier ideals for each $m$). Hence, there is no guarantee that it exists. The second problem is that, even when such minimal ideal exists, it may not detect the singularities, as example \ref{ex:Km>-1,K=-1} shows. Indeed, in that case, for each $m$ the multiplier ideal associated to $(X,\Delta_m)$ would be $\regsh_X$ since, for any log resolution $Y\rightarrow X$, $K^+_{m,Y/X}>-1$. Indeed, any log resolution would factor through the blow-up of the vertex of the cone $Z\rightarrow X$ and $g:Y\rightarrow Z$, and $K^+_{m,Y/X}\geq K_{Y/Z}+g^*(K^+_{m,Z/X})$. However, $\Jscr^+(X)\neq\regsh_X$ since $X$ does not have \ltp singularities.
\end{rk}

\subsection{Bertini type theorem}
\begin{thm}\label{thm:bertini} Let $X$ be a normal variety having \ltp singularities. Then the generic hyperplane section of $X$ has \ltp singularities.
\end{thm}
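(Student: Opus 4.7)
My plan is to reduce \ltp-ness of the generic hyperplane section $S$ to that of $X$ via an adjunction-style identity for the $m$-limiting divisors $K^+_m$, using the compatible boundaries supplied by Lemma \ref{lm:relativecompatibleboundary}. First I fix a log resolution $f:Y\to X$ and a very ample linear system $|H|$ on $X$, and take $S\in|H|$ generic. Classical Bertini and the base-point-freeness of $f^*|H|$ ensure that $S$ is normal, $T:=f^{-1}_*S$ is smooth and meets $\exc(f)$ transversally, and, crucially, $f^*S=T$: for generic $S\in|H|$ no $f$-exceptional divisor $E$ satisfies $f(E)\subseteq\supp S$, so $\val_E(S)=0$ and the natural pullback picks up no component of $\exc(f)$. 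Hence $g:=f|_T:T\to S$ is a log resolution of $S$ whose exceptional divisors are exactly the reduced components of $\exc(f)\cap T$.

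For each $m\geq 2$, I apply Lemma \ref{lm:relativecompatibleboundary} to obtain an $m$-compatible $(-K_X)$-boundary $\Delta^X_m$ with respect to $f$ whose restriction $\Delta^S_m:=\Delta^X_m\big|_S$ is an $m$-compatible $(-K_S)$-boundary with respect to $g$; by a further generic choice, $S\not\subseteq\supp\Delta^X_m$ and $(f^{-1}_*\Delta^X_m)\big|_T=g^{-1}_*\Delta^S_m$. The core of the argument is then the identity
$$
K^+_{m,Y/X}\big|_T=K^+_{m,T/S},
$$
obtained by expanding
\begin{align*}
K^+_{m,Y/X}&=K_Y+f^*(-K_X+\Delta^X_m)-f^{-1}_*\Delta^X_m,\\
K^+_{m,T/S}&=K_T+g^*(-K_S+\Delta^S_m)-g^{-1}_*\Delta^S_m,
\end{align*}
and combining the adjunction $K_T=(K_Y+T)\big|_T$, the equality $-K_S+\Delta^S_m=(-K_X-S+\Delta^X_m)\big|_S$ (from $K_S=(K_X+S)\big|_S$), the commutativity $f^*(\cdot)\big|_T=g^*((\cdot)\big|_S)$ for $\Q$-Cartier classes, and the cancellation enabled by $f^*S=T$.

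Taking $\liminf_m$ on both sides yields $K^+_{Y/X}\big|_T=K^+_{T/S}$. Since $X$ is \ltp, Lemma \ref{lm:ltwithoneresolution} produces $\e>0$ with $\ord_E(K^+_{Y/X})>-1+\e$ for every $f$-exceptional $E$; by transversality, any $g$-exceptional divisor $E'\subseteq T$ is a reduced component of $E\big|_T$ for some such $E$, whence $\ord_{E'}(K^+_{T/S})=\ord_E(K^+_{Y/X})>-1+\e>-1$. A second application of Lemma \ref{lm:ltwithoneresolution} to $g:T\to S$ then gives that $S$ is \ltp. I expect the main obstacle to be the adjunction identity in the second paragraph: the algebra is short, but it relies essentially on arranging $f^*S=T$ so that the $T\big|_T$ terms cancel, which is exactly where the genericity of the hyperplane section enters.
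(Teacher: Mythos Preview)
Your proof is correct and follows essentially the same strategy as the paper: fix a log resolution $f$, use Lemma~\ref{lm:relativecompatibleboundary} to produce simultaneous weak $m$-compatible $(-K_X)$- and $(-K_S)$-boundaries, compare the $m$-limiting divisors on $Y$ and on $T$, pass to the infimum, and conclude via Lemma~\ref{lm:ltwithoneresolution}. The one difference is in the comparison step. The paper does not compute $K^+_{m,Y/X}\big|_T$ explicitly; instead it invokes the classical Bertini inequality for log pairs (\cite{singsofpairs}, 7.7) to get $\discrep(X,-\Delta_m)\leq\discrep(S,-\Delta_{m,S})$ and then takes the infimum over $m$. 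You instead establish the sharper \emph{equality} $K^+_{m,Y/X}\big|_T=K^+_{m,T/S}$ by a direct adjunction computation, using $f^*S=T$ to make the normal-bundle terms cancel. Your route is more self-contained and actually yields a bit more (an identity rather than an inequality), while the paper's is shorter on the page because it outsources that computation to a citation; the underlying content is the same.
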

\begin{proof} Let $f:Y\rightarrow X$ be any log resolution of $X$. Then, for a generic hyperplane section $S$ of $X$, $S$ will be normal and $T:=f^{-1}_*S=f^*S$ will be smooth. Moreover, the map $g:=f\big|_T:T\rightarrow S$ will be a log resolution. By lemma \ref{lm:relativecompatibleboundary}, for each $m\geq 2$, we can find $(-K_X)$-boundaries $\Delta_m$ on $X$ such that
\begin{enumerate}
\item $-K_X+\Delta_m$ and $-K_S+\Delta_{m,S}$ are $\Q$-Cartier, where $\Delta_{m,S}:=\Delta_m\big|_S$,
\item $K^+_{m,Y/X}=f^*(-K_X+\Delta_m)-f^{-1}_*\Delta_m=K_{Y/X}^{-\Delta_m}$, and
\item $K^+_{m,T/S}=g^*(-K_S+\Delta_{m,S})-g^{-1}_*\Delta_{m,S}=K_{T/S}^{-\Delta_{m,S}}$.
\end{enumerate}
As in \cite{singsofpairs}, 7.7 (for example), $\discrep(X,-\Delta_m)\leq\discrep(S,-\Delta_{m,S})$. Therefore,
$$
-1<\inf_m\discrep(X,-\Delta_m)\leq\inf_m\discrep(S,-\Delta_{m,S}).
$$
Notice that, since $f$ and $g$ are log resolution, the above discrepancies are computed directly by looking at the orders of $K_{Y/X}^{-\Delta_m}$ and $K_{T/S}^{-\Delta_{m,S}}$ along exceptional divisors over $X$ and $S$ respectively. As
$$
K^+_{T/S}=\inf_m K^+_{m,T/S}=\inf_m K_{T/S}^{-\Delta_{m,S}},
$$
$S$ has \ltp singularities (proposition \ref{lm:ltwithoneresolution}).
\end{proof}
Similarly, we can prove the following.
\begin{prop} Let $X$ be a normal variety having canonical (resp. terminal) singularities. Then, the generic hyperplane section of $X$ has canonical (resp. terminal) singularities.
\end{prop}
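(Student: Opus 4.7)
The plan is to adapt the proof of Theorem~\ref{thm:bertini} essentially verbatim, with the \ltp threshold $-1$ replaced by the canonical threshold $0$ (or, for terminal, its strict version). First I fix any log resolution $f:Y\to X$; for a sufficiently general hyperplane section $S$, classical Bertini gives that $S$ is normal, that $T:=f^{-1}_*S=f^*S$ is smooth, and that $g:=f\big|_T$ is a log resolution of $S$. For each $m\geq 2$, Lemma~\ref{lm:relativecompatibleboundary} furnishes a $(-K_X)$-boundary $\Delta_m$ on $X$ whose restriction $\Delta_{m,S}:=\Delta_m\big|_S$ is a $(-K_S)$-boundary, together with the identifications $K^+_{m,Y/X}=K^{-\Delta_m}_{Y/X}$ and $K^+_{m,T/S}=K^{-\Delta_{m,S}}_{T/S}$.

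Next I invoke the hypothesis. Because $X$ is canonical (resp.\ terminal), the inequality $K^+_{Y/X}\leq K^+_{m,Y/X}$ from~(\ref{eq:K-<=K^+}), combined with $a^+(F,X)\geq 0$ (resp.\ $>0$) for every exceptional $F$ over $X$, yields $\discrep(X,-\Delta_m)\geq 0$ for every $m$, or in the terminal case $\discrep(X,-\Delta_m)\geq\e$ for a uniform $\e>0$, using the finiteness of exceptional divisors on $Y$. The standard Bertini-type inequality for log pairs (\cite{singsofpairs},~7.7) then gives
\[
0\leq\discrep(X,-\Delta_m)\leq\discrep(S,-\Delta_{m,S})
\]
(with $0$ replaced by $\e$ in the terminal case) for every $m$. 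Since $g$ is a log resolution, the right-hand side is read off from the exceptional coefficients of $K^{-\Delta_{m,S}}_{T/S}=K^+_{m,T/S}$; passing to the infimum in $m$ via $K^+_{T/S}=\inf_m K^+_{m,T/S}$ shows that every exceptional coefficient of $K^+_{T/S}$ on $T$ is $\geq 0$ (resp.\ $\geq\e$).

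The last step is to upgrade this bound from exceptional divisors on the single log resolution $T\to S$ to all prime divisors exceptional over $S$, i.e.\ to invoke the analogue of Lemma~\ref{lm:ltwithoneresolution} with threshold $0$ (resp.\ strict positive threshold). Its proof never uses the specific value $-1$; it uses only the effectivity of the $g$-exceptional correction term relating $(fg)^\natural$ and $g^\natural f^\natural$ from Lemma~\ref{lm:(fg)*-g*f*}, so the argument transfers verbatim. The main point requiring care is the canonical case, where the absence of a positive slack $\e$ forbids any loss in the estimates on further blow-ups; fortunately the corrections involved are by effective $g$-exceptional divisors, so no slack is lost and one concludes $a^+(F,S)\geq 0$ (resp.\ $>0$) for every prime divisor $F$ exceptional over $S$, i.e.\ $S$ is canonical (resp.\ terminal).
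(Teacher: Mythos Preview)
Your proposal is correct and follows exactly the route the paper intends: the paper gives no separate argument for this proposition, simply writing ``Similarly, we can prove the following'' after Theorem~\ref{thm:bertini}, and your write-up is precisely the expected adaptation of that proof with the threshold $-1$ replaced by $0$ (resp.\ a strictly positive $\e$). Two small remarks on the final paragraph: the relevant effectivity statement for the $*$-pullback is \cite{dFH}, 2.13 (as invoked in the proof of Lemma~\ref{lm:multideal}), rather than Lemma~\ref{lm:(fg)*-g*f*}, which concerns $\natural$-pullbacks; and the reason the analogue of Lemma~\ref{lm:ltwithoneresolution} goes through at threshold $0$ is not that the value $-1$ is irrelevant, but that a log smooth pair $(Y,B)$ with $B\le 0$ is already canonical (indeed $\ord_F(K_{W/Y})\ge 1$ for any $g$-exceptional $F$ over a smooth $Y$), so no slack is needed in the canonical case.
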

\begin{cor} Let $X$ be a normal variety having terminal singularities; then $X$ is regular in codimension $2$. Let $X$ be a normal variety having \ltp singularities; then $X$ is $\Q$-factorial in codimension $2$.
\end{cor}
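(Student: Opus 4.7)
The plan is to use the Bertini-type theorem just proved above to reduce to the two-dimensional case, where the conclusion is classical. Iterating the Bertini proposition $\dim X - 2$ times produces a general codimension-$(\dim X-2)$ linear section $S\subseteq X$ which is a normal surface with terminal (respectively \ltp) singularities; by theorem \ref{thm:dFH 7.2}, such an $S$ admits an effective boundary $\Delta_S$, with $\lfloor\Delta_S\rfloor=0$, making $(S,\Delta_S)$ a classical terminal (resp. klt) pair.

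In the terminal case, removing $\Delta_S$ only increases discrepancies, so $S$ itself is a classical terminal surface, and the standard classification forces $S$ to be smooth. In the \ltp case, two-dimensional klt pairs are known to have $S$ itself $\Q$-factorial, since log terminal surface singularities are quotient singularities.

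Now suppose, toward a contradiction, that $X$ were singular (resp. not $\Q$-factorial) along a codimension-$2$ subvariety $Z$, with generic point $\xi$. Since $\dim Z = \dim X-2$ and $S$ is a general codimension-$(\dim X-2)$ complete intersection, $S\cap Z$ is a non-empty finite set of closed points. In the terminal case, at each $p\in S\cap Z$ a Zariski tangent space count shows $S$ is singular at $p$, contradicting smoothness of $S$. In the \ltp case, after moving a Weil divisor $D$ on $X$ witnessing the failure of $\Q$-factoriality at $\xi$ within its linear equivalence class so that $S$ is not contained in its support, the restriction $D|_S$ is a Weil divisor on $S$ failing to be $\Q$-Cartier at some $p\in S\cap Z$, contradicting $\Q$-factoriality of $S$.

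The main obstacle is the descent step for $\Q$-factoriality in the \ltp case: one must verify that $\Q$-Cartierness of $D|_S$ at $p$ would lift back to $\Q$-Cartierness of $D$ at $\xi$ on $X$. This uses that $S$ is cut out in $X$ by a regular sequence of hyperplane sections, so that a local equation for $D|_S$ on $S$ near $p$ extends, via a lifting along the regular sequence, to a local equation for $D$ on $X$ near $p$, and hence produces a Cartier representative at the codimension-$2$ point $\xi$.
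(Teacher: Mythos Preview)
Your overall plan --- reduce to a surface via iterated Bertini and then invoke classical surface theory --- is the same as the paper's. However, the real gap is not the $\Q$-factoriality descent you flag as ``the main obstacle'', but rather your application of theorem \ref{thm:dFH 7.2}. That theorem produces an effective boundary making $(S,\Delta_S)$ terminal (resp.\ klt) under the hypothesis that $S$ satisfies condition $M_{>0}$ (resp.\ $M_{>-1}$), which is a condition on the $K^-$-side discrepancies $a_m(\cdot,S)=\ord(K^-_{m,\cdot/S})$. What the Bertini step actually gives you is that $S$ is terminal (resp.\ \ltp) in the sense of this paper, a condition on $a^+(\cdot,S)=\ord(K^+_{\cdot/S})$. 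These are \emph{not} equivalent in general --- example \ref{ex:lt+,notCM} is \ltp but does not satisfy $M_{>-1}$ --- so as written you are not entitled to invoke theorem \ref{thm:dFH 7.2}.

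The missing step, and the actual crux of the paper's proof, is the observation that in dimension two the de Fernex--Hacon pullback agrees with the numerical (Mumford) pullback and is therefore linear (\cite{BdFF}, 2.20). This forces $K^-_{Y/S}=K^+_{Y/S}$, and then (as in \cite{dFH}, 7.13) terminal (resp.\ \ltp) for the surface $S$ becomes equivalent to condition $M_{>0}$ (resp.\ $M_{>-1}$); only at that point does theorem \ref{thm:dFH 7.2} apply. Once you insert this sentence, the remainder of your argument goes through, and your explicit descent discussion --- while more detailed than the paper's one-line ``reduce to $\dim X=2$'' --- is not where the difficulty lies.
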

\begin{proof} By taking hyperplane sections (and using induction on dimension), we reduce to the case $\dim X=2$. Let $f:Y\rightarrow X$ be any proper birational map. For surfaces the pullback corresponds to the numerical pullback (\cite{BdFF}, 2.20) which is linear, and thus $K^-_{Y/X}=K^+_{Y/X}$. But then, as in \cite{dFH}, 7.13, if $X$ is terminal (resp. \ltp) this is equivalent to satisfying condition $M_{>0}$ (resp. $M_{>-1}$). Therefore $X$ is smooth (resp. $\Q$-factorial).
\end{proof}

\subsection{Inversion of adjunction and small deformations}
\begin{thm}\label{thm:invofadjunction} Let $X$ be a normal variety, and let $S$ be an effective (normal and reduced) Cartier divisor in $X$ having \ltp singularities. Then $X$ has \ltp singularities in a neighborhood of $S$.
\end{thm}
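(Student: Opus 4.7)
The plan is to reduce the statement, for each sufficiently divisible $m$, to a quantitative inversion of adjunction applied to the sub-pair $(X,S-\Delta_m)$, where $\Delta_m$ is a weak $m$-compatible $(-K_X)$-boundary whose restriction to $S$ is also a weak $m$-compatible $(-K_S)$-boundary.

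By lemma \ref{lm:ltwithoneresolution} it suffices to produce a single $\varepsilon>0$ such that $a^+(E,X)>-1+\varepsilon$ for every prime exceptional divisor $E$ over $X$ whose center is contained in $S$, and since $a^+(E,X)=\inf_m a^+_m(E,X)$ by (\ref{K-(Y/X)=limsupK-(m,Y/X)}) this in turn reduces to bounding $a^+_m(E,X)$ uniformly in $m$. Fix a log resolution $f:Y\to X$ on which $E$ appears, chosen so that $T:=f^{-1}_*S$ is smooth and $g:=f|_T:T\to S$ is a log resolution. For each sufficiently divisible $m$, lemma \ref{lm:relativecompatibleboundary} produces $\Delta_m$ on $X$ and $\Delta_{m,S}:=\Delta_m|_S$ on $S$ with $K^+_{m,Y/X}=K^{-\Delta_m}_{Y/X}$ and $K^+_{m,T/S}=K^{-\Delta_{m,S}}_{T/S}$, so we are in a log-$\Q$-Gorenstein sub-pair setting for $(X,S-\Delta_m)$ with adjoint $(S,-\Delta_{m,S})$.

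Because $S$ has \ltp singularities, another application of lemma \ref{lm:ltwithoneresolution} (this time on $S$) yields a single $\varepsilon>0$, independent of $m$, with $a(E',S,-\Delta_{m,S})>-1+\varepsilon$ for all $m$ and all divisors $E'$ over $S$. The sub-pair form of inversion of adjunction—a consequence of the Shokurov--Koll\'ar connectedness principle, valid because $S$ is a normal Cartier divisor and $-\Delta_m$ has no components along $S$—propagates this to $a(E,X,S-\Delta_m)>-1+\varepsilon$ for every exceptional $E$ with center in $S$. Writing $f^*S=T+F$ with $F\geq 0$ $f$-exceptional and supported over $S$, a direct comparison from the definitions gives, for exceptional $E$,
$$a^+_m(E,X)=a(E,X,S-\Delta_m)+\mathrm{ord}_E(F)\geq a(E,X,S-\Delta_m)>-1+\varepsilon,$$
so that $a^+(E,X)\geq -1+\varepsilon>-1$, and $X$ is \ltp near $S$ by lemma \ref{lm:ltwithoneresolution}.

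The principal difficulty is the sub-pair inversion of adjunction: because the ``boundary'' $-\Delta_m$ is anti-effective, the classical effective-boundary statement does not apply verbatim, and one must appeal to the sub-klt / sub-plt version (see e.g. Koll\'ar's \emph{Singularities of the Minimal Model Program}). One also has to keep careful bookkeeping of the exceptional divisor $F$ of $f^*S$, which accounts for the gap between the pair invariant $a(E,X,S-\Delta_m)$ and the de Fernex--Hacon invariant $a^+_m(E,X)$; fortunately $F\geq 0$, so the gap works in our favor.
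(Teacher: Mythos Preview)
Your approach is essentially the same as the paper's: fix a log resolution $f:Y\to X$ restricting to $g:T\to S$, use lemma~\ref{lm:relativecompatibleboundary} to produce simultaneous weak $m$-compatible boundaries $\Delta_m$ and $\Delta_{m,S}$, invoke a sub-pair inversion of adjunction to pass from $(S,-\Delta_{m,S})$ to $(X,S-\Delta_m)$, and then use the effectivity of $F=f^*S-T$ to drop the $S$. The paper organizes this slightly differently---it fixes a single $f$ once and for all rather than one per divisor $E$---but the skeleton is identical.

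There is, however, a genuine gap in your step~3. You assert that connectedness propagates the bound $a(E',S,-\Delta_{m,S})>-1+\varepsilon$ to $a(E,X,S-\Delta_m)>-1+\varepsilon$ with the \emph{same} $\varepsilon$. The connectedness principle in the form of theorem~\ref{thm:connectedthm} (equivalently lemma~\ref{lm:komo5.50}) only yields the qualitative implication ``$>-1$ on $S$ $\Rightarrow$ $>-1$ on $X$ near $S$''; it does not give a quantitative $\varepsilon$-version for sub-pairs, and a Kawakita-type precise inversion of adjunction is stated in the literature for effective boundaries, not for the anti-effective $-\Delta_m$ here. The paper avoids this by working on the fixed resolution and using the exact adjunction identity
\[
K^{-\Delta_{m,S}}_{T/S}=\bigl(K^{S-\Delta_m}_{Y/X}\bigr)\big|_T,
\]
so that for every $f$-exceptional $E$ meeting $T$ one has $a(E,X,S-\Delta_m)=a(E\cap T,S,-\Delta_{m,S})$, and the uniform $\varepsilon$ on $S$ transfers directly. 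For $f$-exceptional $E$ with center in $S$ but $E\cap T=\emptyset$, one observes $\mathrm{ord}_E(F)\geq 1$ (since $f^*S$ is integral and $E\subseteq f^{-1}(S)$), so the qualitative bound $a(E,X,S-\Delta_m)>-1$ from lemma~\ref{lm:komo5.50} already gives $a^+_m(E,X)>0$, again uniformly in $m$. Replacing your black-box invocation with this two-case argument closes the gap and recovers the paper's proof.
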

As in the usual inversion of adjunction for klt singularities, \cite{komo}, 5.50, this result relies on the following connectedness theorem.
\begin{thm}[\cite{komo}, 5.48]\label{thm:connectedthm} Let $f:Y\rightarrow X$ be a proper and birational morphism, $Y$ smooth, $X$ normal. Let $D=\sum d_i D_i$ be an snc $\Q$-divisor on $Y$ such that $f_*D$ is effective and $-(K_Y+D)$ is $f$-nef. Write $F=\sum_{i:d_i\geq1} D_i$; then $\supp F$ is connected.
\end{thm}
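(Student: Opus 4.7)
This is the classical Shokurov--Kollár connectedness lemma. My plan is to follow the standard strategy: apply relative Kawamata--Viehweg--Kollár vanishing to an integral divisor built from $D$, combine with the short exact sequence associated to $F$, and derive a contradiction from disconnectedness of $\supp F$.

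\emph{Reduction to the local case.} The assertion ``$\supp F$ is connected'' is local on $X$ in the following sense: assuming $X$ connected, $\supp F$ is connected as soon as $\supp F\cap f^{-1}(x)$ is connected for every $x\in f(\supp F)$, because $f|_{\supp F}\colon \supp F\to f(\supp F)$ is a surjection to a connected set. So I fix $x\in f(\supp F)$ and replace $X$ by $\Spec(\regsh_{X,x})$, reducing to proving connectedness of the single fiber $\supp F\cap f^{-1}(x)$.

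\emph{Vanishing.} Let $L:=\lceil -D\rceil=-\lfloor D\rfloor$, an integral divisor on $Y$. The key identity is
\[
L-K_Y=-K_Y-\lfloor D\rfloor=-(K_Y+D)+\{D\},
\]
where $-(K_Y+D)$ is $f$-nef by hypothesis and $\{D\}$ is a $\Q$-boundary with SNC support and coefficients in $[0,1)$. Since $f$ is birational, the $f$-bigness requirement in relative Kawamata--Viehweg--Kollár vanishing is automatic, so
\[
R^i f_*\regsh_Y(L)=0\quad\text{for }i\geq 1.
\]

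\emph{Short exact sequence and contradiction.} Tensoring $0\to\regsh_Y(-F)\to\regsh_Y\to\regsh_F\to 0$ (valid because $F$ is a reduced Cartier divisor on the smooth $Y$) by $\regsh_Y(L+F)$ gives
\[
0\to\regsh_Y(L)\to\regsh_Y(L+F)\to\regsh_F\bigl((L+F)\big|_F\bigr)\to 0.
\]
Pushing forward and using the vanishing yields a surjection
\[
f_*\regsh_Y(L+F)\twoheadrightarrow f_*\regsh_F\bigl((L+F)\big|_F\bigr).
\]
The hypothesis $f_*D\geq 0$ forces every non-$f$-exceptional component $D_i$ of $D$ to have $d_i\geq 0$; thus $L+F$ has coefficient $1-\lfloor d_i\rfloor\leq 0$ on $F$-components and $-\lfloor d_i\rfloor\leq 0$ on the remaining non-exceptional components. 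A valuative argument using normality of $X$ then identifies $f_*\regsh_Y(L+F)$ with an ideal inside $\regsh_X$. Now suppose for contradiction that $\supp F\cap f^{-1}(x)=F_1\sqcup F_2$ is a nontrivial disjoint union; then $\regsh_F=\regsh_{F_1}\oplus\regsh_{F_2}$, and using adjunction on the SNC configuration one checks that the restriction $(L+F)|_{F_i}$ admits the section $1_{F_i}$. These idempotent sections lift, through the surjection, to elements $s_1,s_2\in f_*\regsh_Y(L+F)\subseteq\regsh_{X,x}$. One then reaches a contradiction by exploiting that $\regsh_{X,x}$ is a normal domain while its image in $\regsh_{F,x}$ would have to realize a nontrivial idempotent decomposition coming from $F_1\sqcup F_2$.

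\emph{Main obstacle.} The subtle step is the very last one: passing from the surjection at the level of coherent sheaves to an actual contradiction. One must (i) verify that the idempotent sections $1_{F_i}$ really are global sections of $\regsh_{F_i}((L+F)|_{F_i})$, which requires the coefficient bound $1-\lfloor d_i\rfloor\leq 0$ together with a careful adjunction analysis so that no intersection $F_i\cap D_j$ destroys effectivity after restriction; and (ii) turn the presence of lifts $s_1,s_2$ into an honest contradiction with $\regsh_{X,x}$ being a domain, rather than only with it having no idempotents (since $f_*\regsh_Y(L+F)$ is only an ideal, not a subring). This last point is where the full strength of the hypotheses $f_*D\geq 0$, $X$ normal, and $\supp D$ SNC combine.
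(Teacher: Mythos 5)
The paper offers no proof of this statement: it is quoted from Koll\'ar--Mori [KM98, 5.48] as a known input to Lemma \ref{lm:komo5.50}, so the only meaningful comparison is with the standard proof there. Your overall strategy --- relative Kawamata--Viehweg vanishing applied to a round-up built from $-D$, a pushed-forward short exact sequence onto the locus $F$, and a local-ring argument over a point of $X$ --- is exactly the standard one, and your vanishing step is correct. But the divisor you twist by is the wrong one, and this is not a cosmetic choice: it is precisely what produces the two ``main obstacles'' you list at the end and leave unresolved, so as written the proof does not close.

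The working choice is to split $D=A+G$ with $A=\sum_{d_i<1}d_iD_i$ and $G=\sum_{d_i\geq1}d_iD_i$, set $N:=\lceil -A\rceil$, and use
$$
0\rightarrow\regsh_Y(N-\lfloor G\rfloor)\rightarrow\regsh_Y(N)\rightarrow\regsh_{\lfloor G\rfloor}(N)\rightarrow0 .
$$
Here $N-\lfloor G\rfloor=\lceil -D\rceil$ is your $L$, so the vanishing is unchanged; the gain is that $N$ is effective (all coefficients of $A$ are $<1$) and $f$-exceptional (using $f_*D\geq0$), whence $f_*\regsh_Y(N)=\regsh_X$ exactly --- a cyclic $\regsh_{X,x}$-module --- and $N$ contains no component of $F$, so $N\big|_{\lfloor G\rfloor}$ is honestly effective and every connected component of $\supp F\cap f^{-1}(x)$ contributes a nonzero direct summand of $f_*\regsh_{\lfloor G\rfloor}(N)_x$. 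Nakayama then finishes: a cyclic module cannot surject onto a module needing two generators. With your $L+F$ ($F$ reduced), the coefficient of an $F$-component $D_i$ is $1-\lfloor d_i\rfloor$, which is negative as soon as $d_i\geq2$; the restriction to $F$ then involves negative powers of the normal bundles $\regsh_{D_i}(D_i)$, so the idempotents $1_{F_i}$ need not be sections and the summands may vanish; and the source $f_*\regsh_Y(L+F)$ is only an ideal of $\regsh_{X,x}$, not a cyclic module, so producing two lifts contradicts nothing. (When every $d_i\in[1,2)$ your $L+F$ coincides with $\lceil -A\rceil$ and the two arguments agree; the general case genuinely requires subtracting the full integral part $\lfloor G\rfloor$ rather than the reduced $F$.) A smaller point: your reduction asserts that $f(\supp F)$ is connected, which does not follow from connectedness of $X$; the correct conclusion (and the one the paper actually uses in Lemma \ref{lm:komo5.50}) is that $\supp F$ is connected in a neighborhood of each fiber $f^{-1}(x)$, which is exactly what the fiberwise argument yields.
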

Now we will use this theorem to prove the above inversion of adjunction. The first result is a version of \cite{komo}, 5.50 for pairs with an anti-effective boundary.
\begin{lm}\label{lm:komo5.50} Let $X$ be a normal variety, and let $S$ be an effective (normal and reduced) Cartier divisor in $X$. Let $f:Y\rightarrow X$ be a log resolution of $(X,S)$, which restricts to a log resolution $g:=f\big|_T:T\rightarrow S$, $T:=f^{-1}_*S$. Let $\Delta$ be an effective divisor such that $K_X-\Delta$ is $\Q$-Cartier, and let $\Delta_S=\Delta\big|_S$. If the discrepancy of $(S,-\Delta_S)$ relative to $g$ is bigger than $-1$, then so is the one of $(X,-\Delta+S)$ with respect to $f$ in a neighborhood of $S$.
\end{lm}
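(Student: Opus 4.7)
The plan is to imitate the standard proof of inversion of adjunction (\cite{komo}, 5.50), adapted to the anti-effective perturbation $-\Delta_S$ in place of an effective boundary, extracting the conclusion from the discrepancy formula restricted to $T$ via adjunction on the smooth Cartier divisor $T\subset Y$.

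First I would reduce to the case where $f$ is a log resolution of $(X,S+\Delta)$ and $g=f\big|_T$ of $(S,\Delta_S)$. This is harmless: discrepancies are invariants of divisorial valuations and are unaffected by passing to a higher log resolution, and the hypothesis on $g$ carries over. Under this reduction, $\exc(f)\cup T\cup f^{-1}_*\Delta$ has snc support, and in particular $f^{-1}_*\Delta\big|_T=g^{-1}_*\Delta_S$.

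Since $K_X+S-\Delta=(K_X-\Delta)+S$ is $\Q$-Cartier, I would then write
$$
K_Y=f^*(K_X+S-\Delta)-T+f^{-1}_*\Delta+\sum_i a_i E_i,
$$
with $a_i:=a(E_i,X,S-\Delta)$ and the $E_i$ the $f$-exceptional primes. Applying adjunction on $T$ and using $(K_X+S-\Delta)\big|_S=K_S-\Delta_S$, this becomes
$$
K_T=g^*(K_S-\Delta_S)+g^{-1}_*\Delta_S+\sum_i a_i\bigl(E_i\big|_T\bigr).
$$
A dimension count shows that every component $F$ of $E_i\big|_T$ is $g$-exceptional (since $f(E_i)$ has codimension $\geq 2$ in $X$, the image $g(F)\subseteq f(E_i)\cap S$ has codimension $\geq 1$ in $S$), and by snc each such $F$ has coefficient $1$ in $E_i\big|_T$. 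Comparing with the canonical decomposition on $T$ for the pair $(S,-\Delta_S)$ relative to $g$, I would then match coefficients to identify $a(F,S,-\Delta_S)=a_i$ for every component $F$ of $E_i\big|_T$.

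The conclusion follows: by hypothesis, $a(F,S,-\Delta_S)>-1$ for every $g$-exceptional prime $F$ on $T$, so $a_i>-1$ whenever $E_i\big|_T\neq 0$. Since $E_i\cap T\neq\emptyset$ is equivalent to $f(E_i)\cap S\neq\emptyset$, this is precisely the claim that the discrepancy of $(X,-\Delta+S)$ relative to $f$ exceeds $-1$ in a neighborhood of $S$. The main subtle points are the log-resolution reduction (so that the restricted strict transforms match under adjunction) and the geometric identification of components of $E_i\cap T$ as $g$-exceptional; the more global form of the conclusion (for $E_i$ not meeting $T$) would require the connectedness Theorem \ref{thm:connectedthm} in the standard KM~5.50 manner, but the localised statement asserted here follows cleanly from adjunction on $T$ alone.
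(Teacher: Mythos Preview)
Your argument has a genuine gap at the final step. The assertion ``$E_i\cap T\neq\emptyset$ is equivalent to $f(E_i)\cap S\neq\emptyset$'' is false: only the forward implication holds. An $f$-exceptional prime $E_i$ may satisfy $f(E_i)\subset S$ (hence $E_i\subset f^{-1}(S)$, so $E_i$ lies over every neighborhood of $S$) while $E_i\cap T=\emptyset$. For instance, take $X=\Aspace^3$, $S=\{z=0\}$, blow up the origin to obtain $E_1$, then blow up a point of $E_1$ lying off the strict transform of $S$ to obtain $E_2$; the composite is still a log resolution of $(X,S)$, yet $E_2$ lies over $0\in S$ and is disjoint from $T$. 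Your adjunction formula on $T$ says nothing about $a_2$.

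This is exactly the gap that the connectedness theorem (Theorem~\ref{thm:connectedthm}) is designed to fill, and the paper invokes it: adjunction on $T$ yields only $F\cap T=\emptyset$ (where $F$ collects the exceptional primes with discrepancy $\leq -1$), and then connectedness of $T\cup F$ over a neighborhood of each $x\in S$ forces $F=\emptyset$ there. Your closing sentence has it backwards---the ``localised'' conclusion near $S$ concerns every $E_i$ with $f(E_i)\cap S\neq\emptyset$, not merely those meeting $T$, so connectedness is not optional here.

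A secondary remark: the paper does \emph{not} reduce to a log resolution of $(X,S+\Delta)$; it works directly with the given $f$, and the remark following the lemma stresses that $f,g$ are not assumed to resolve the pairs involving $\Delta$. This matters for the application to Theorem~\ref{thm:invofadjunction}, where $\Delta=\Delta_m$ varies with $m$ while $f$ stays fixed.
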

\begin{proof} Let $\Delta^Y=f^{-1}_*\Delta$. Let us write
$$
K_Y-\Delta^Y+T+F=f^*(K_X+S-\Delta)+A,
$$
where all the coefficient in $F$ are bigger or equal than $-1$, and all the coefficients of $A$ are strictly bigger than $-1$. Moreover, $T+F-A$ is snc, and $F$ is effective. Restricting the above identity to $T$, we obtain
$$
K_T-\Delta_S^T=g^*(K_S-\Delta_S)+(A-F)\big|_T.
$$
The discrepancy of $(S,-\Delta_S)$ (with respect to $g$) is bigger than $-1$ if and only if $F\cap T=\emptyset$, while the discrepancy of $(X,-S+\Delta)$ (with respect to $f$) is bigger than $-1$ if and only if $F\cap f^{-1}(S)=\emptyset$. Let $D=T+F-A$; notice that $f_*D=S\geq0$ and
$$
-(K_Y+D)=\Delta^Y+f^*(K_X+S-\Delta)
$$
is $f$-nef. By theorem \ref{thm:connectedthm}, each $x\in S$ has an open neighborhood $x\in U_x\subseteq X$ such that $(T\cup F)\cap f^{-1}(U_x)$ is connected, hence $F\cap f^{-1}(U_x)=\emptyset$. Moving $x\in S$, we obtain the claim.
\end{proof}
\begin{rk} Notice that in the statement we do not assume that the maps $f$ and $g$ are resolutions of the pairs $(X,-\Delta+S)$ and $(S,-\Delta_S)$, so that the discrepancies relative to $f$ and $g$ are not necessarily the discrepancies of the pairs (i.e., non necessarily those pairs are log terminal).
\end{rk}
\begin{proof}[Proof of theorem \ref{thm:invofadjunction}] Let us fix a log resolution $f:Y\rightarrow X$ of $(X,S)$, which restricts to a log resolution $g:=f\big|_T:T\rightarrow S$, $T:=f^{-1}_*S$. For each $m\geq2$, we can find a weak $m$-compatible $(-K_X)$-boundary $\Delta_m$ for $f$ such that $\Delta_{m,S}:=\Delta_m\big|_S$ is a weak $m$-compatible $(-K_S)$-boundary for $g$ (lemma \ref{lm:relativecompatibleboundary}). As $K^-_{T/S}\leq K^{-\Delta_m}_{T/S}$ and $S$ has \ltp singularities, the discrepancy of $(S,-\Delta_{m,S})$ with respect to $g$ is bigger than $-1$. By lemma \ref{lm:komo5.50}, the discrepancy of $(X,-\Delta_m+S)$ with respect to $f$ is bigger than $-1$. As $S$ is Cartier and effective, the discrepancy of $(X,-\Delta_m)$ with respect to $f$ is bigger than $-1$. More precisely, we have
\begin{eqnarray*}
K_Y-\Delta_m^Y+T&=&f^*(K_X-\Delta_m+S)+\sum a_iE_i=\\
&=&f^*(K_X-\Delta_m)+f^*(S)+\sum a_iE_i=\\
&=&f^*(K_X-\Delta_m)+T+\sum b_iE_i+\sum a_iE_i=\\
&=&f^*(K_X-\Delta_m)+T+\sum(a_i+b_i)E_i,
\end{eqnarray*}
where $f^*(S)-T=\sum b_iE_i\geq0$, so that
$$
K_Y-\Delta_m^Y=f^*(K_X-\Delta_m)+\sum(a_i+b_i)E_i\geq f^*(K_X-\Delta_m)+\sum a_iE_i.
$$
As in the proof of lemma \ref{lm:komo5.50}, in this case the coefficients of the discrepancy of $(S,-\Delta_{m,S})$ (with respect to $g$) are exactly the ones of the discrepancy of $(X,-\Delta_m+S)$ in a neighborhood of $S$ (with respect to $f$). As the
$$
\inf_m K^{-\Delta_{m,S}}_{T/S}=K^+_{T/S}
$$
has all coefficient strictly bigger than $-1$, so has
$$
\inf_m K^{-\Delta_m}_{Y/X}=K^+_{Y/X}
$$
(in a neighborhood of $S$). Thus $X$ has \ltp singularities in a neighborhood of $S$ (proposition \ref{lm:ltwithoneresolution}).
\end{proof}
As immediate corollary of theorem \ref{thm:invofadjunction} we have deformation invariance.
\begin{cor}\label{cor:definv} Let $f:X\rightarrow T$ be a proper flat family of varieties over a smooth curve $T$ and $t\in T$ a closed point. If the fiber $X_t$ has \ltp singularities, then so do the other fibers near $t$.
\end{cor}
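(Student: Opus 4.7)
The plan is to combine inversion of adjunction (Theorem \ref{thm:invofadjunction}) with the Bertini Theorem \ref{thm:bertini}, using the observation that a fiber of a flat morphism over a smooth curve is automatically an effective Cartier divisor. Specifically, since $T$ is smooth and $f$ is flat, a local uniformizer of $T$ at $t$ pulls back to a local equation for $X_t$ in $X$, so $X_t$ is an effective Cartier divisor in $X$; by hypothesis it is also normal with \ltp singularities. Theorem \ref{thm:invofadjunction} then produces an open neighborhood $U$ of $X_t$ in $X$ on which the total space itself has \ltp singularities.

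Next, using properness of $f$, after shrinking $T$ to an open neighborhood $V$ of $t$ disjoint from the closed set $f(X\setminus U)$, one can arrange that $f^{-1}(V) \subseteq U$, so that the entire relevant total space is \ltp and every nearby fiber $X_{t'}$ is a Cartier divisor sitting in a \ltp variety. These fibers vary in a $1$-parameter family pulled back from points of the smooth curve $V$, and I would apply Theorem \ref{thm:bertini} to this family to conclude that the generic fiber $X_{t'}$ has \ltp singularities. Together with the hypothesis that $X_t$ itself has \ltp singularities, this yields an open (Zariski) neighborhood of $t$ in $T$ over which every fiber is \ltp, which is the content of the corollary.

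The main obstacle I anticipate is the invocation of Theorem \ref{thm:bertini} for this specific $1$-parameter family: as stated, the Bertini theorem concerns a generic hyperplane section in a sufficiently ample linear system on a fixed variety, while our fibers arise as pullbacks of points on the curve $V$, a potentially much smaller family. To bridge the gap one should revisit the proof of Theorem \ref{thm:bertini}, whose essential ingredients are the existence of weak $m$-compatible $(-K_X)$-boundaries restricting compatibly to the relevant divisor (Lemma \ref{lm:relativecompatibleboundary}) together with the inequality $\discrep(X, -\Delta_m) \leq \discrep(S, -\Delta_{m,S})$; these should remain available when $S$ is replaced by a general fiber of $f$ over a log resolution of $(X, X_t)$ that simultaneously restricts to a log resolution of nearby fibers, possibly at the cost of shrinking $V$ further. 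Alternatively, the openness of the \ltp locus on the total space (Corollary \ref{cor:multideal}) combined with the proper pushforward argument above already forces the \ltp condition to propagate from $X_t$ outward along $f$, and this by itself may suffice without explicit appeal to Bertini.
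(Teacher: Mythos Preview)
Your approach is essentially identical to the paper's: apply Theorem~\ref{thm:invofadjunction} to get \ltp singularities on the total space near $X_t$, use properness of $f$ to shrink $T$ so that all of $X$ is \ltp, and then invoke Theorem~\ref{thm:bertini} to conclude that the general fiber is \ltp. The paper cites Theorem~\ref{thm:bertini} without further comment at the last step, whereas you correctly flag that the theorem as stated concerns generic hyperplane sections rather than fibers of a one-parameter family; your suggested fix (rerunning the proof of Theorem~\ref{thm:bertini} with $S$ a general fiber, using generic smoothness to ensure the restricted map is a log resolution) is exactly what is needed, and is implicitly what the paper intends.
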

\begin{proof} The technique of this proof is standard (see, for example, \cite{ks}, 4.2). By theorem \ref{thm:invofadjunction}, $X$ has \ltp singularities near $X_t$. Let $Z$ be the non-\ltp locus of $X$, which is closed in $X$. Since $f$ is proper, $f(Z)$ is a closed subset of $T$ not containing $t$. By restricting $T$ to an open set, we may assume that $X$ has \ltp singularities. By theorem \ref{thm:bertini}, all fibers over nearby points of $t\in T$ have \ltp singularities.
\end{proof}

\subsection{Rational singularities}
We start by recalling the definition of rational singularities.
\begin{df} Let $X$ be a variety. $X$ is said to have \emph{rational singularities} if for a resolution $f:Y\rightarrow X$ (or, equivalently, for all resolutions) $Rf_*\regsh_Y\simeq_{q.is.}\regsh_X$.
\end{df}
We have the following useful characterization of rational singularities (see for example \cite{singsofpairs}, 11.9)
\begin{thm}[Kempf's criterion] Let $X$ be a normal variety and $f:Y\rightarrow X$ a resolution. Then $X$ has rational singularities if and only if $f_*\omega_Y=\omega_X$ and $X$ is Cohen-Macaulay. 
\end{thm}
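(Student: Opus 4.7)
The plan is to derive both directions from relative Grothendieck--Serre duality applied to the proper birational morphism $f:Y\rightarrow X$, combined with Grauert--Riemenschneider vanishing. Since $Y$ is smooth of dimension $n=\dim X$, one has $f^!\omega_X^\bullet=\omega_Y[n]$, so duality yields a natural isomorphism in the derived category
$$
Rf_*\omega_Y[n]\simeq R\mathcal{H}om_{\regsh_X}(Rf_*\regsh_Y,\omega_X^\bullet),
$$
where $\omega_X^\bullet$ denotes a normalized dualizing complex on $X$. Grauert--Riemenschneider vanishing gives $R^if_*\omega_Y=0$ for $i>0$, so the left-hand side degenerates to $f_*\omega_Y[n]$, concentrated in degree $-n$.

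For the direction ($\Rightarrow$), I assume $Rf_*\regsh_Y\simeq\regsh_X$. Then the right-hand side becomes $R\mathcal{H}om(\regsh_X,\omega_X^\bullet)=\omega_X^\bullet$, and so $\omega_X^\bullet\simeq f_*\omega_Y[n]$ is a complex concentrated in a single cohomological degree. This is precisely the standard characterization that $X$ is Cohen--Macaulay; taking $H^{-n}$ of both sides then identifies $\omega_X\simeq f_*\omega_Y$.

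For the direction ($\Leftarrow$), I assume $X$ is Cohen--Macaulay, so that $\omega_X^\bullet\simeq\omega_X[n]$, and $f_*\omega_Y\simeq\omega_X$. Substituting into the duality isomorphism, I obtain
$$
\omega_X[n]\simeq R\mathcal{H}om_{\regsh_X}(Rf_*\regsh_Y,\omega_X[n]).
$$
Applying $R\mathcal{H}om(-,\omega_X^\bullet)$ once more and invoking biduality for complexes with coherent cohomology (valid because $X$ is of finite type over $\C$ and hence admits a dualizing complex), I conclude
$$
Rf_*\regsh_Y\simeq R\mathcal{H}om(\omega_X^\bullet,\omega_X^\bullet)\simeq\regsh_X,
$$
the last isomorphism being biduality applied to $\regsh_X$ itself. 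Thus $X$ has rational singularities.

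The main obstacle is purely formal: one must invoke relative Grothendieck duality in its correct form and justify the biduality step, both of which require only that $X$ admit a dualizing complex. The sole non-trivial geometric input is Grauert--Riemenschneider vanishing, which is what makes the implication work uniformly for any resolution and confirms independence of the chosen $f$.
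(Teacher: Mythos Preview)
Your argument is correct; it is in fact the standard duality-based proof of Kempf's criterion. Note, however, that the paper does not supply its own proof of this theorem: it is quoted as a known result with a reference to \cite{singsofpairs}, 11.9, and is used only as input to Theorem~\ref{thm:lt++CM=>rtl}. So there is no ``paper's proof'' to compare against.

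One small point worth making explicit in your write-up: in the backward direction you obtain an \emph{abstract} isomorphism $Rf_*\regsh_Y\simeq\regsh_X$ in the derived category, whereas the definition requires the \emph{natural} map $\regsh_X\to Rf_*\regsh_Y$ to be a quasi-isomorphism. This is immediate once you observe that your isomorphism forces $R^if_*\regsh_Y=0$ for $i>0$, while normality of $X$ already guarantees that the canonical map $\regsh_X\to f_*\regsh_Y$ is an isomorphism; but it is worth saying.
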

The next result is in the line of various results, which relate log terminal singularities and rational singularities.
\begin{thm}\label{thm:lt++CM=>rtl} If $X$ has Cohen-Macaulay and \ltp singularities, then it has rational singularities.
\end{thm}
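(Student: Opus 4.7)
My plan is to reduce the statement to Kempf's criterion (recalled just before the theorem): since $X$ is Cohen-Macaulay by hypothesis, it suffices to produce a log resolution $f:Y\rightarrow X$ with $f_*\omega_Y=\omega_X$. The inclusion $f_*\omega_Y\subseteq\omega_X$ is automatic for any normal $X$, so the content is the reverse inclusion $\omega_X\subseteq f_*\omega_Y$, which I would establish by a direct sheaf-theoretic argument using the sheaf criterion for \ltp.

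Specifically, Proposition \ref{prop:sheaf criterion} provides some $\e\in\Q_{>0}$ such that for all sufficiently divisible $m$ (in particular with $m\e\in\Z$) one has
\[\regsh_X(mK_X)\cdot\regsh_Y\subseteq\regsh_Y\big(m(K_Y+(1-\e)F_Y)\big),\]
where $F_Y$ is the reduced exceptional divisor of $f$. Given a local section $\phi\in\omega_X(U)$, its $m$-th power satisfies $\phi^m\in\regsh_X(mK_X)(U)$, so viewing $\phi^m$ as a rational function on $Y$ the inclusion forces
\[\div_Y(\phi^m)+m(K_Y+(1-\e)F_Y)\geq 0\quad\text{on }f^{-1}(U),\]
which after dividing by $m$ rearranges to $\div_Y(\phi)+K_Y\geq-(1-\e)F_Y$.

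The key observation — and the only place where the \emph{strict} gap in the definition of \ltp is used — is a rounding argument: $\div_Y(\phi)+K_Y$ is an integer divisor, while $-(1-\e)F_Y$ has all its coefficients strictly greater than $-1$; hence the integer coefficients of $\div_Y(\phi)+K_Y$ are bounded below by a number greater than $-1$ and are therefore $\geq 0$. This gives $\phi\in\omega_Y(f^{-1}(U))$, proving $\omega_X\subseteq f_*\omega_Y$, and Kempf's criterion together with the Cohen-Macaulay hypothesis then concludes that $X$ has rational singularities.

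The main conceptual obstacle is translating the \ltp property (phrased in terms of $f^*(-K_X)$) into a usable sheaf inclusion involving $\regsh_X(mK_X)$, but this is exactly what Proposition \ref{prop:sheaf criterion} (see also Remark \ref{rk:proof of dFH, 8.2}) delivers. Note that the argument fails if one only has $\e=0$ (i.e.\ log canonical singularities), consistent with the classical example of the cone over an elliptic curve being Cohen-Macaulay and log canonical but not rational.
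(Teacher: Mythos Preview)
Your proposal is correct and follows essentially the same strategy as the paper's proof: reduce to Kempf's criterion, establish $\omega_X\subseteq f_*\omega_Y$ via the sheaf-theoretic reformulation of \ltp (Remark~\ref{rk:proof of dFH, 8.2}/Proposition~\ref{prop:sheaf criterion}), and conclude by an integrality/rounding argument. The only cosmetic difference is that the paper works directly at $m=1$, observing that $K_Y+f^\natural(-K_X)$ is already an \emph{integral} divisor with all exceptional coefficients $>-1$ (since $K^+_{Y/X}\leq K^+_{1,Y/X}$), hence $\geq 0$, which immediately yields $\omega_X\cdot\regsh_Y\subseteq\omega_Y$; you instead invoke the $m$-divisible version of the sheaf criterion and pass through an $m$-th power of a section before rounding. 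Both routes are the same argument at heart.
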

\begin{proof} Let $Y$ be any resolution. For each prime divisor $E$ in $Y$ exceptional over $X$,
$$
-1<\ord_E(K^+_{Y/X})\leq\ord_E\big(K_Y+f^\natural(-K_X)\big).
$$
Since $K_Y+f^\natural(-K_X)$ is an integral divisor, we have that
$$
K_Y+f^\natural(-K_X)\geq0.
$$
As in the proof of \cite{dFH}, 8.2 (see remark \ref{rk:proof of dFH, 8.2}), this is equivalent with the condition $\regsh_X(K_X)\cdot\regsh_Y\subseteq\regsh_Y(K_Y)$, that is
$$
\omega_X\cdot\regsh_Y\subseteq\omega_Y.
$$
Pushing forward on $X$ this inclusion ($f_*$ is left exact) we obtain
$$
f_*(\omega_X\cdot\regsh_Y)\subseteq f_*\omega_Y.
$$

Notice that, if $\Fscr$ is a torsion-free sheaf on a normal variety $X$ subsheaf of $\Kscr_X$ and $f:Y\rightarrow X$ is any proper birational morphism, then
$$
\Fscr\subseteq f_*(\Fscr\cdot\regsh_Y).
$$
Indeed $\Fscr\cdot\regsh_Y$ is the image of $f^*\Fscr$ in $\Kscr_Y$. We have natural maps
$$
\Fscr\rightarrow f_*f^*\Fscr\rightarrow f_*(\Fscr\cdot\regsh_Y).
$$
The composition above is an isomorphism at the generic point of $X$, and thus the kernel must be torsion. As $\Fscr$ is torsion-free, the above composition is injective. In our case, we have the natural inclusion $\omega_X\subseteq f_*(\omega_X\cdot\regsh_Y)$.

We have the chain of inclusions
$$
\omega_X\subseteq f_*(\omega_X\cdot\regsh_Y)\subseteq f_*\omega_Y\subseteq\omega_X,
$$
which proves
$$
f_*\omega_Y=\omega_X.
$$
As $X$ is Cohen-Macaulay, $X$ has rational singularities.
\end{proof}
\begin{rk} As previously observed, this is just one of the several results relating log terminal singularities and rational singularities. The first one is Elkik's theorem, \cite{elk}, which says that, $\Q$-Gorenstein log terminal singularities are rational. More generally, \cite{komo}, 5.22, dlt singularities are rational. We recall that also log terminal singularities in the sense of \cite{dFH} are rational (\cite{dFH}, 7.7). 

Of particular interest is also the following result of de Fernex and Docampo \'Alvarez (\cite{dFDA}, 7.2). Let $X$ be a (normal) variety such that $rK_X$ is Cartier, and let $\dfrak_X$ denote the lci defect ideal of $X$. Then if $X$ has rational singularities, $(X,\dfrak_X^{-1})$ is $J$-canonical. The converse holds if $X$ is Cohen-Macaulay. We refer to the paper of de Fernex and Docampo \'Alvarez for all the relevant definitions.
\end{rk}
\begin{rk} The assumption of Cohen-Macaulay is necessary. The example \ref{ex:lt+,notCM} has \ltp singularities but it is not Cohen-Macaulay. This phenomenon is similar to the one happening in the case of Jacobian singularities (see previous remark). In the result \cite{dFDA}, 7.2, the assumption of Cohen-Macaulay for the converse direction is essential as well.
\end{rk}

\section{Some examples}
In this section, we limit ourselves to some examples; the theoretical discussion will follow in the next section. For the first example, we will write the entire computation. The other examples use the same ideas.
\subsection{First example}
\begin{ex}\label{con:example} Let us consider the $(1:2)$ embedding of $\Pspace^1\times\Pspace^1$ in $\Pspace^5$; let us call it $Z$. Let $X'=C(Z)\subseteq\Aspace^6$ be the affine cone over it, and let $X=\overline{C(Z)}\subseteq\Pspace^6$ be the projective cone. Notice that the singularities of affine and projective cones are the same.

Let us fix some notation. Let $f_1=\Pspace^1\times\{pt\}$ and $f_2=\{pt\}\times\Pspace^1$ and let $L\equiv f_1+2f_2$ be the ample divisor giving the embedding. The divisor group of $\Pspace^1\times\Pspace^1$ is generated by $f_1,f_2$ and we have $f_1.f_2=1$, $f_1^2=f_2^2=0$.

Let $f:Y\rightarrow X$ be the blow-up of the vertex of the cone, which is a log resolution of $X$, and let $E$ be the exceptional divisor.
\end{ex}
\begin{lm} The cone $X$ in \ref{con:example} is normal and non-$\Q$-Gorenstein.
\end{lm}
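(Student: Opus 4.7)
The plan is to handle the two claims separately, using the cone description throughout.

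For normality, I would apply Serre's criterion $R_1 + S_2$. Regularity in codimension one is free: $Z = \Pspace^1 \times \Pspace^1$ is smooth, so $X$ is smooth away from the vertex, which has codimension $\dim X \geq 2$. For $S_2$, the standard criterion for the affine cone $X'=\Spec\bigoplus_{m\geq0}H^0(Z,mL)$ to be normal is that $Z$ be projectively normal under $L=\regsh_Z(1,2)$, equivalently that $H^1(Z,\regsh_Z(mL))=0$ for all $m\geq0$ and that the multiplication maps $H^0(Z,L)^{\otimes m}\to H^0(Z,mL)$ be surjective. Both are immediate from the Künneth formula on $\Pspace^1\times\Pspace^1$: $H^1(\Pspace^1\times\Pspace^1,\regsh(m,2m))=0$ for $m\geq0$, and surjectivity of multiplication for $\regsh(a,b)$ with $a,b\geq1$ reduces to the elementary surjectivity on each $\Pspace^1$ factor. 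Normality of the projective cone $X$ then follows from normality of the affine cone (the two differ only by the addition of the hyperplane at infinity, along which $X$ is locally a product with $Z$, hence smooth there).

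For non-$\Q$-Gorensteinness, I would show that $K_X$ is not $\Q$-Cartier at the vertex by a Picard group computation on $Z$. The key fact is that a Weil divisor $D$ on a cone $X$ over $(Z,L)$ is $\Q$-Cartier at the vertex if and only if its restriction to the hyperplane section at infinity (which is $Z$) is $\Q$-linearly equivalent to a rational multiple of $L$. Applied to $K_X$, whose restriction along a hyperplane section is $K_Z$ (up to adjunction contributions that are $\Q$-multiples of $L$), this says $K_X$ is $\Q$-Cartier iff $K_Z\sim_\Q rL$ for some $r\in\Q$. Since $\Pic(\Pspace^1\times\Pspace^1)=\Z f_1\oplus\Z f_2$, and $K_Z=-2f_1-2f_2$ while $L=f_1+2f_2$, the equation $(-2,-2)=r(1,2)$ in $\Q^2$ has no solution, so $K_Z$ is not a rational multiple of $L$. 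Hence $K_X$ is not $\Q$-Cartier.

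The two technical nuisances will be: (i) citing the right projective-normality criterion cleanly so that I need only the Künneth vanishing, and (ii) justifying in one line the Picard-group criterion for $\Q$-Cartier-ness at the cone point. For (ii) I can invoke the cone-computation already present in the paper (the equivalence of the three conditions in Corollary \ref{cor:pullback and restriction, cones}, or argue directly using that the local class group at the vertex of the cone over $(Z,L)$ is $\Pic(Z)/\langle L\rangle$, tensored with $\Q$). No step requires substantial computation beyond these.
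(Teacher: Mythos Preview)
The paper states this lemma without proof, treating both assertions as standard. Your outline supplies exactly the expected details and is correct. For normality you reduce to projective normality of $\Pspace^1\times\Pspace^1$ under $\regsh(1,2)$ and verify it via K\"unneth and surjectivity of multiplication on each factor; for non-$\Q$-Gorensteinness you invoke the identification of the local class group at the vertex with $\Pic(Z)/\langle L\rangle$ (the same fact the paper itself cites later as \cite{kol}, 70.(1)) and observe that $K_Z=-2f_1-2f_2$ is not a rational multiple of $L=f_1+2f_2$.

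One small imprecision: the vanishing $H^1(Z,\regsh_Z(mL))=0$ is not part of the normality criterion. Normality of the embedded cone is equivalent (for $Z$ smooth) to projective normality, which is precisely the surjectivity of the multiplication maps; the $H^1$-vanishing would only be needed if you wanted Cohen--Macaulayness of the cone. This does not affect the validity of your argument, since you do check the multiplication-map surjectivity.
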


\begin{prop}\label{prop:limiting can sings} The cone $X$ in \ref{con:example} has $K^-_{Y/X}=0$. Moreover, for all $m\geq1$, $K^-_{m,Y/X}=0$. In particular, $X$ has canonical singularities.
\end{prop}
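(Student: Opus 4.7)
The plan is to compute $K_Y$ and $f^\natural(mK_X)$ explicitly using the cone structure and verify that $f^\natural(mK_X)=mK_Y$ for every $m\geq1$.

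First, identify the geometry. The blow-up $f:Y\rightarrow X$ is a $\Pspace^1$-bundle over $Z=\Pspace^1\times\Pspace^1$ (affine-locally, the total space of the line bundle $\regsh_Z(-L)$), and the exceptional divisor $E$ is the zero section. Thus $E\cong Z$ and $E\big|_E\sim-L=-(f_1+2f_2)$. Let $\pi:Y\rightarrow Z$ denote the bundle projection.

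Second, compute $K_Y$. Fix specific fibers $f_i^{(0)}\subset Z$ and take $K_X=-2C(f_1^{(0)})-2C(f_2^{(0)})$, the cone over $K_Z=-2f_1-2f_2$. The ansatz $K_Y=-2\pi^*(f_1^{(0)})-2\pi^*(f_2^{(0)})+cE$ is forced by $f_*K_Y=K_X$, and adjunction $(K_Y+E)\big|_E\sim K_E=K_Z$, together with $\pi^*(f_i^{(0)})\big|_E\sim f_i^{(0)}$ and $E\big|_E\sim-L$, yields $c=-1$. Hence $K_Y=-2\pi^*(f_1^{(0)})-2\pi^*(f_2^{(0)})-E$.

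Third, compute $f^\natural(mK_X)=\div\bigl(\regsh_X(-mK_X)\cdot\regsh_Y\bigr)$ via the graded ring $R=\bigoplus_k H^0(Z,kL)$. The degree-$n$ piece of $\regsh_X(-mK_X)$ consists of $\C^*$-weight-$n$ rational functions $\phi$ with $\div(\phi)+2mC(f_1^{(0)})+2mC(f_2^{(0)})\geq0$, and as a vector space this is $H^0(Z,(n+2m)f_1+(2n+2m)f_2)$, nonzero iff $n\geq-m$. Pulled back to $Y$, such a $\phi$ has $\ord_E(\phi)=n$ (the $\C^*$-weight) and $\ord_{\pi^*(f_i^{(0)})}(\phi)=-2m+\mu_i$, where $\mu_i\geq0$ measures extra vanishing of $\phi$ along $f_i^{(0)}$. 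Generic sections of the minimal degree $n=-m$ realize $\mu_i=0$, attaining $\ord_E=-m$ and $\ord_{\pi^*(f_i^{(0)})}=-2m$ simultaneously; no generator can do better, as $n\geq-m$ forces $\ord_E\geq-m$ and $\mu_i\geq0$ forces $\ord_{\pi^*(f_i^{(0)})}\geq-2m$. Minimizing over all generators, $f^\natural(mK_X)=-mE-2m\pi^*(f_1^{(0)})-2m\pi^*(f_2^{(0)})=mK_Y$.

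Therefore $K^-_{m,Y/X}=K_Y-\tfrac{1}{m}f^\natural(mK_X)=0$ for every $m\geq1$, and taking limits gives $K^-_{Y/X}=\limsup_m K^-_{m,Y/X}=0$. From \eqref{eq:K-<=K^+} we obtain $K^+_{Y/X}\geq K^-_{Y/X}=0$ on the log resolution $Y$; adapting Lemma \ref{lm:ltwithoneresolution} from $\mathrm{M}_{>-1}$ to $\mathrm{M}_{\geq0}$ extends this to all prime divisors over $X$, so $X$ has canonical singularities.

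The main obstacle is the simultaneous minimization in step three: one must check that generic sections in the minimal degree $n=-m$ have no unforeseen vanishing along other vertical primes, and that no higher-degree generator beats the bounds along the specific $\pi^*(f_i^{(0)})$. Both reduce to base-point-freeness in small-bidegree linear systems on $\Pspace^1\times\Pspace^1$, which is routine; the content is that the bounds are sharp, which is precisely what forces the $m$-limiting discrepancies $K^-_{m,Y/X}$ to vanish exactly for all $m$ rather than being merely non-positive.
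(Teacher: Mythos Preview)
Your argument is correct but takes a different route from the paper's. The paper first applies the cone formula of Lemma~\ref{lm:stefano3.3.3}: since $sL-K_Z=(s+2)f_1+(2s+2)f_2$ is effective precisely for $s\geq-1$, one finds $t=-1$ and hence $K^-_{Y/X}=0$. To pin down each $K^-_{m,Y/X}$ the paper then uses a sandwich: taking the boundary $\Delta=C_{f_1}$ one has $K_Z+f_1\sim-L$, so $K_X+\Delta$ is Cartier, and the same formula gives $K^\Delta_{Y/X}=0$; then $0=K^\Delta_{Y/X}\leq K^-_{m,Y/X}\leq K^-_{Y/X}=0$ for every $m$. You instead compute $f^\natural(mK_X)$ directly from the graded ring of the cone and show it equals $mK_Y$ outright. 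The paper's route is slicker once Lemma~\ref{lm:stefano3.3.3} is available and showcases the boundary-sandwich technique used repeatedly later; your computation is self-contained and makes the equality $f^\natural(mK_X)=mK_Y$ explicit without invoking any auxiliary boundary. One simplification: your verification that $\ord_{\pi^*(f_i^{(0)})}(f^\natural(mK_X))=-2m$ is in fact automatic, since $X$ is smooth away from the vertex and $f$ is an isomorphism there, forcing the non-exceptional part of $f^\natural(mK_X)$ to be $mf^{-1}_*K_X$; only the coefficient along $E$ genuinely needs to be computed.
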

\begin{lm}[\cite{stefano}, proof of 3.3]\label{lm:stefano3.3.3} Let $V$ be a smooth variety, and let $L$ be a very ample line bundle on $V$ determining an embedding $V\hookrightarrow\Pspace^n$, and let $X$ be the projective cone over it. Let us assume that $X$ is normal. Let $f:Y\rightarrow X$ be the blow-up of the vertex, with exceptional divisor $E$. Then
\begin{eqnarray}\label{eq:K-Y/X}
\ord_E(K^-_{Y/X})=-(1+t),
\end{eqnarray}
where
$$
t=\inf\{s\in\Q\,|\,\textrm{$sL-K_V\sim_\Q\Delta$, $\Delta$ is effective}\}.
$$
\end{lm}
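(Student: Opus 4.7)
The approach is to compute $\ord_E(K_Y)$ and $\ord_E(f^*K_X)$ in compatible Weil representatives and subtract. Since the statement is local around $E$, we may pass to the affine cone and its blow-up.

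The key geometric fact is that near $E$, $Y$ is the total space of the line bundle $L^{-1}$ over $V$, with projection $p\colon Y\to V$ and $E$ the zero section (so $E|_E\cong L^{-1}$, hence $E|_E\equiv -L$). Two divisor identities will drive everything. First, $p^*L\sim -E$: a section $s\in H^0(V,L)$ pulls back to a function on $\mathrm{Tot}(L^{-1})$ with divisor $p^*\div(s)+E$. Second, $K_Y\sim p^*K_V+p^*L$, which follows from $\Omega_{Y/V}\cong p^*L$ for the total space of $L^{-1}$. Combining the two, $K_Y\sim p^*K_V-E$.

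To fix representatives, set $n_1:=\min\{n\geq 0: |K_V+nL|\neq\emptyset\}=\lceil -t\rceil$ and pick an effective $K'_V\in|K_V+n_1L|$. Take $K_X:=C_{K'_V}$, the cone over $K'_V$; this is the effective representative of the canonical class coming from the graded-module description $\omega_X=\bigoplus_k H^0(V,K_V+kL)$. Since $p^*K'_V\sim p^*K_V+n_1p^*L\sim p^*K_V-n_1E$, the divisor $K_Y:=p^*K'_V+(n_1-1)E$ lies in the correct class and satisfies $f_*K_Y=C_{K'_V}=K_X$, so $\ord_E(K_Y)=n_1-1$. For $\ord_E(f^*K_X)=\liminf_m \ord_E(f^\natural(mK_X))/m$, I use that $\val_E$ on $K(X)$ equals the lowest degree of the homogeneous decomposition. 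The ideal $\regsh_X(-mK_X)$ of $mC_{K'_V}$ has degree-$k$ piece $\{g\in H^0(V,kL):\div_V(g)\geq mK'_V\}=H^0(V,(k-mn_1)L-mK_V)$, so
\begin{equation*}
\frac{\ord_E(f^\natural(mK_X))}{m}=n_1+\frac{1}{m}\min\bigl\{j\in\Z:H^0(V,jL-mK_V)\neq 0\bigr\}.
\end{equation*}
The $\liminf$ of the second term is $t$ by definition, so $\ord_E(f^*K_X)=n_1+t$ and therefore
\begin{equation*}
\ord_E(K^-_{Y/X})=(n_1-1)-(n_1+t)=-(1+t).
\end{equation*}

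The main delicate point is the bookkeeping between graded representations: $K_X$, $K_Y$ and the fractional ideal $\regsh_X(-mK_X)$ must all be realized in the same Weil-divisor embedding in $K(X)$, so that the shift by $n_1$ appearing in the graded description of $\omega_X$ cancels against the shift $n_1-1$ in $K_Y$. The identity $p^*L\sim -E$ is exactly what makes both sides of this cancellation geometric and produces the clean final answer $-(1+t)$ independent of $n_1$.
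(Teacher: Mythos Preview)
Your argument is correct and gives a more hands-on proof than the paper's. The paper simply observes that $K_X\sim C_{K_V-L}$ and invokes a general formula (Claim~\ref{claim:pullback of divisor}, proved later) computing $f^*(C_D)=f^{-1}_*C_D+t_D\,E$ with $t_D=\inf\{s:sL-D\sim_\Q\Delta,\ \lfloor\Delta\rfloor=0\}$ for any divisor $D$ on $V$; that formula is established through the compatible-boundary description of the pullback (Corollary~\ref{cor:pullback and compatible $D$-boundary}). You instead unwind the definition of $f^*$ directly through the graded ring of the affine cone: pick an explicit effective representative $K_X=C_{K'_V}$, read $\val_E$ on $\regsh_X(-mK_X)$ as the lowest nonvanishing graded degree, and let the auxiliary shift $n_1$ cancel against the matching shift in $K_Y$. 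Your route is more elementary and self-contained for cones; the paper's is a one-line deduction from machinery it needs anyway, and that machinery covers $f^*(C_D)$ for arbitrary $D$, not just $K_X$.

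One harmless slip: the parenthetical $n_1=\lceil -t\rceil$ is false in general, since $n_1$ measures effectivity of $K_V+nL$ while $t$ measures effectivity of $sL-K_V$, and these need not be related. For instance with $V=\Pspace^1\times\Pspace^1$, $L=f_1+2f_2$, $K_V=-2f_1-2f_2$ (the paper's Example~\ref{con:example}) one finds $n_1=2$ but $t=-1$. You never actually use this identification---all that matters is that $K'_V\in|K_V+n_1L|$ for \emph{some} integer $n_1$, and the two occurrences of $n_1$ cancel in the final subtraction---so the proof stands as written once you drop that remark.
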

\begin{proof} This is a particular case of claim \ref{claim:pullback of divisor}, for $D=K_V-L$, as $K_X\sim C_{K_V}-C_L$.
\end{proof}
\begin{proof}[Proof of \ref{prop:limiting can sings}] Notice that, since $Z\cong\Pspace^1\times\Pspace^1$, $K_Z\sim-2f_1-2f_2$. For each $s\in\R$,
$$
sL-K_Z\sim s(f_1+2f_2)-(-2f_1-2f_2)=(s+2)f_1+(2s+2)f_2,
$$
which is effective as long as $s\geq-1$. Therefore, with the notation of the lemma, $t=-1$ and $\ord_E(K^-_{Y/X})=-(1-1)=0$. Thus $K^-_{Y/X}=0$, and $X$ has limiting canonical singularities.

Notice that, for all $m\geq 1$, and each $\Delta$ such that $m(K_X+\Delta)$ is Cartier,
$$
K^\Delta_{Y/X}\leq K^-_{m,Y/X}\leq K^-_{Y/X}=0.
$$
If $\Delta=C_{f_1}$, the cone over $f_1$, then $K_Z+f_1\sim-2f_1-2f_2+f_1=-f_1-2f_2=-L$. By \cite{kol}, 70.(1), $K_X+\Delta$ is Cartier, and therefore
$$
K^\Delta_{Y/X}\leq K^-_{m,Y/X},
$$
for all $m$. As discussed in the proof of the lemma, we have that $f_1=(-1)L-K_Z$ and $K^\Delta_{Y/X}=-(1-1)E=0$. Thus, for all $m\geq1$, we have the chain of inequalities
$$
0=K^\Delta_{Y/X}\leq K^-_{m,Y/X}\leq K^-_{Y/X}=0,
$$
which concludes the computation.
\end{proof}
\begin{cor}\label{cor:can sings example 1} The cone in \ref{con:example} satisfies the condition $M_{\geq0}$, that is, there exists an effective boundary on $X$ such that $(X,\Delta)$ is canonical.
\end{cor}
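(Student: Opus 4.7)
The plan is to reduce the statement to Theorem \ref{thm:dFH 7.2}, which identifies condition $M_{\geq 0}$ with the existence of an effective boundary $\Delta$ making the pair $(X,\Delta)$ canonical in the usual (log) sense. Virtually all the numerical work needed is already contained in the proof of Proposition \ref{prop:limiting can sings}: taking $\Delta = C_{f_1}$, the divisor $K_X + \Delta$ is Cartier by \cite{kol}, 70.(1), and we computed there that $K^\Delta_{Y/X} = 0$ on the blow-up $f\colon Y \to X$ of the vertex.

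What remains is to confirm that this one computation really delivers the canonicity of $(X,\Delta)$, i.e.\ that \emph{every} prime divisor exceptional over $X$ has $(X,\Delta)$-discrepancy $\geq 0$, not merely the divisor $E$ appearing on $Y$. First I would verify that $f$ is a log resolution of the pair $(X,\Delta)$: $Y$ is smooth, $E \cong \Pspace^1 \times \Pspace^1$ is the only exceptional divisor, and the strict transform $f^{-1}_*\Delta$ of the cone $C_{f_1}$ meets $E$ transversally along the smooth curve $f_1 \subset E$, so $E \cup f^{-1}_*\Delta$ has snc support. Then for any higher model $g\colon W \to Y$, the standard identity
$$
K^\Delta_{W/X} = K_{W/Y} + g^* K^\Delta_{Y/X},
$$
together with the effectiveness of $K_{W/Y}$ and the non-negativity of the unique exceptional coefficient of $K^\Delta_{Y/X}$, forces every $X$-exceptional discrepancy on $W$ to be $\geq 0$. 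Hence $(X,\Delta)$ is canonical, and Theorem \ref{thm:dFH 7.2} delivers condition $M_{\geq 0}$.

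I do not anticipate a real obstruction here: the nontrivial numerics were already done in Proposition \ref{prop:limiting can sings}, and the passage from one log resolution to arbitrary birational models is classical. The only step that deserves a sentence of justification is the snc check making $f$ a log resolution of $(X,\Delta)$, which is immediate from the geometry of $Z = \Pspace^1 \times \Pspace^1$ and its ruling.
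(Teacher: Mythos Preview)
Your approach is essentially the paper's: the paper's one-line proof records only the key point that every effective divisor on $\Pspace^1\times\Pspace^1$ is $\Q$-linearly equivalent to a smooth one, i.e.\ the boundary $\Delta=C_{f_1}$ (or any cone boundary arising in the computation) can be taken so that the blow-up of the vertex is already a log resolution of the pair $(X,\Delta)$. You unpack exactly this, with the explicit choice $\Delta=C_{f_1}$.

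One correction, though: the identity $K^\Delta_{W/X}=K_{W/Y}+g^*K^\Delta_{Y/X}$ is false as written. Expanding the definitions one finds
\[
K^\Delta_{W/X}=K_{W/Y}+g^*K^\Delta_{Y/X}+\bigl(g^{-1}_*(f^{-1}_*\Delta)-g^*(f^{-1}_*\Delta)\bigr),
\]
and the last bracket is $\leq 0$ and $g$-exceptional, so effectiveness of $K_{W/Y}$ alone does not force nonnegative discrepancies on $W$. The ``classical'' passage you allude to genuinely uses the snc check you made: once $(Y,f^{-1}_*\Delta)$ is log smooth with a single smooth boundary component of coefficient $1$, one invokes e.g.\ \cite{komo}, 2.31 (or the direct computation that blowing up a smooth codimension-$c$ center inside $f^{-1}_*\Delta$ gives discrepancy $c-2\geq 0$) to conclude that every discrepancy over $Y$, hence over $X$, is $\geq 0$. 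With that replacement your argument is complete and matches the paper's.
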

\begin{proof} The basis of the cone is $\Pspace^1\times\Pspace^1$, and each effective divisor on $\Pspace^1\times\Pspace^1$ is $\Q$-linearly equivalent to a smooth one.
\end{proof}
\begin{cor}\label{cor:pullback and restriction for example} With the notation of \ref{con:example}, $f^*(K_X)\big|_E\sim_\Q-f_1$, which is neither $\R$-linearly equivalent nor numerically equivalent to $0$.
\end{cor}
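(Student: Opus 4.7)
The plan is to compute $f^*(K_X)|_E$ explicitly using adjunction, exploiting the fact that proposition \ref{prop:limiting can sings} has already pinned down $K^-_{Y/X}$.

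First I would observe that proposition \ref{prop:limiting can sings} gives $K^-_{Y/X} = K_Y - f^*(K_X) = 0$ as Weil divisors, so $f^*(K_X) = K_Y$. Consequently
\[
f^*(K_X)\big|_E = K_Y\big|_E,
\]
and the task reduces to a standard adjunction computation on the smooth variety $Y$ along the Cartier divisor $E$.

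Next, by adjunction, $K_E = (K_Y + E)|_E$, so
\[
K_Y\big|_E = K_E - E\big|_E.
\]
Here $E \cong Z = \Pspace^1 \times \Pspace^1$, so $K_E = -2f_1 - 2f_2$. For the normal bundle, I would use the standard fact that for the blow-up $f \colon Y \to X$ of the vertex of the projective cone on a polarized variety $(Z,L)$, the exceptional divisor $E$ is isomorphic to $Z$ and $\regsh_Y(E)|_E \cong \regsh_Z(-L)$; in our case $L \equiv f_1 + 2f_2$, so $E|_E \equiv -(f_1 + 2f_2)$. Plugging in:
\[
f^*(K_X)\big|_E \;=\; K_E - E\big|_E \;=\; (-2f_1 - 2f_2) - (-(f_1 + 2f_2)) \;=\; -f_1.
\]

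Finally, to verify that $-f_1$ is not numerically (hence not $\R$-linearly) trivial on $E \cong \Pspace^1 \times \Pspace^1$, I would just intersect with $f_2$: using $f_1 \cdot f_2 = 1$, we have $(-f_1) \cdot f_2 = -1 \neq 0$, so $-f_1 \not\equiv 0$, and therefore $-f_1 \not\sim_\R 0$.

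No real obstacle is expected: all non-trivial input (the value of $K^-_{Y/X}$, the identification of $E$ and its normal bundle for a blow-up of a cone vertex) is already available. The only subtlety is making sure that, because $f^*(K_X)$ is the pullback in the sense of \cite{dFH} rather than a $\Q$-Cartier pullback, the identity $f^*(K_X) = K_Y$ is being used as an equality of Weil divisors on $Y$ — which is exactly what proposition \ref{prop:limiting can sings} provides — so that restriction to the Cartier divisor $E$ is unambiguous.
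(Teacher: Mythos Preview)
Your proof is correct and follows essentially the same route as the paper: use proposition \ref{prop:limiting can sings} to get $f^*(K_X)=K_Y$, then apply adjunction together with $E\big|_E\sim -L$ to obtain $K_Y\big|_E = K_E - E\big|_E = -2f_1-2f_2 + (f_1+2f_2) = -f_1$. The explicit intersection check $(-f_1)\cdot f_2=-1$ is a welcome addition that the paper leaves implicit.
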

\begin{proof} We have that $K^-_{Y/X}=0$, so that $K_Y=f^*(K_X)$. Then,
$$
(K_Y+E)\big|_E\sim_\Q K_E\sim-2f_1-2f_2
$$
($E\cong Z\cong\Pspace^1\times\Pspace^1$) and
$$
(f^*(K_X)+E)\big|_E\sim_\Q f^*(K_X)\big|_E+E\big|_E\sim_\Q f^*(K_X)\big|_E-L\sim f^*(K_X)\big|_E-f_1-2f_2.
$$
Therefore, $f^*(K_X)\big|_E\sim_\Q-f_1$.
\end{proof}
\subsection{Three more examples}
With the techniques of the previous example, we can study three more examples. The computations are the same as before, and thus left to the reader.
\begin{ex}\label{ex:Km>-1,K=-1} Let $\Lscr=\regsh_{\Pspace^1}(3)$, and let $\pi:V=\relSpec_{\Pspace^1}S(\Lscr)\rightarrow\Pspace^1$. Then $V$ is an elliptic surface, and $K_V\sim f$, where $f$ denotes the fiber \cite{mir}, III.1.1. Since the sheaf of (local) sections of $\pi$ is $\Lscr$ \cite{har}, Ex II.5.18(2), which has global sections, there exists a section $s:\Pspace^1\rightarrow V$. Then $\Pic(V)/\Pic^0(V)\cong NS(V)$, $\Pic^0(V)\cong\Pic^0(\Pspace^1)$, $\Pic(V)/\Pic(C)\cong NS(V)/\Z\overline{f}$ \cite{mir}, VII.1.1 and VII.1.3. Moreover, if $V_\eta$ is the general fiber of $\pi$, $\Pic(V)\rightarrow\Pic(V_\eta)$ is surjective, with kernel generated by classes of vertical divisors \cite{mir}, VII.1.5. Notice that, in particular, since $\Pic(\Pspace^1)\cong\Z$, the kernel of the restriction $\Pic(V)\rightarrow\Pic(V_\eta)$ is generated by the class of $f$.

If we choose $L$ on $V$ to be $L\sim f+\sum e_iP_i$, where $\sum e_iP_i$, $e_i>0$, is ample on the elliptic curve $V_\eta$, $L$ will be ample on $V$. Then, as long as $s\geq0$, $sL+K_V\sim_\Q D\geq0$. However, as soon as $s<0$, $sL+K_V\sim_\Q (s+1)f-\sum e_i P_i$ which cannot be linearly equivalent to an effective divisor. Thus,
$$
K^+_{Y/X}=-1.
$$
On the other hand, with similar computations to the ones in \cite{zhang}, 4.9, $K^+_{m,Y/X}>-1$ for all $m$.
\end{ex}

\begin{ex}\label{ex:cone/elliptic curve} Let $(\mathscr{E},O)$ be an elliptic curve with an embedding in $\Pspace^n$ such that the cone over it is normal, and let $X$ be the projective cone over $\mathscr{E}$. Let $P$ be a point on $\mathscr{E}$ with infinite order, and let $T=P-O$. Then $T$ is non-torsion, but $T\in\Pic^0(\mathscr{E})$, $T\equiv0$. Let $f:Y\rightarrow X$ be the blow-up of the vertex of the cone, with exceptional divisor $E$. Let $C_T$ be the cone over $T$ in $X$. The computation in claim \ref{claim:pullback of torsion} shows that $f^*C_T=f^{-1}_*C_T$, so that $f^*C_T\big|_E=T\equiv0$ but $f^*(C_T)\big|_E\nsim_\R0$.

The cone over an elliptic curve is not log terminal, however, we can use this idea to give an example of such a phenomenon on an \ltp variety. For example, if $X$ is the cone over $\Escr\times\Pspace^1$ in \cite{stefano}, 4.1 (see example \ref{ex:lt+,notCM}), then we can consider the cone over $T'=T\boxtimes\regsh_{\Pspace^1}$, where $T\in\Pic^0(\Escr)$. If $f:Y\rightarrow X$ is the blowup of the vertex of $X$, with exceptional divisor $E$, we have an example of a divisor $C_{T'}$ on a variety with \ltp singularities, such that $f^*C_{T'}\big|_E=T'\equiv0$ but $f^*(C_{T'})\big|_E\nsim_\R0$.
\end{ex}
\begin{ex} The formula in claim \ref{claim:pullback of divisor} gives us a nice way of creating examples. Let $V$ be the $(2:3)$ embedding of $\Pspace^1\times\Pspace^1$ in $\Pspace^n$, and let $X$ be the projective cone over it. It can be checked that $X$ is normal. The above mentioned formula gives
$$
K^-_{Y/X}=-\frac{1}{3}E,\quad K^+_{Y/X}=0,
$$
giving an example of a singularity log terminal and canonical (in the sense of \cite{dFH}), but with $K^-_{Y/X}$ less then $0$, hence not admitting an effective boundary $\Delta$ such that $(X,\Delta)$ is canonical (in the usual sense).
\end{ex}
\subsection{Last example}\label{sect:lastexample}
Finally, we will show an example of a cone over a smooth variety where by simply looking at boundaries that are cones and at the blow-up of the vertex, we cannot determine the singularity. We remark that the technique used to compute $K^{-}_{Y/X}$ in this example was used by \cite{stefano}, and in the first example of this section, and in the example right above.

First we will identify the hypothesis that we need.
\begin{ex}\label{ex:Fano} Let $V$ be a variety, $\Delta$ be a divisor on $V$, and $n$ a positive integer such that
\begin{enumerate}
\item $V$ is smooth Fano;
\item $\Delta$ is effective and, for each $m\geq0$, all the divisors in the linear system $|m\Delta|$ have singularities worst than normal crossing;
\item $-nK_V-\Delta$ is very ample and, for each $\e>0$,
$$
\big|-(\frac{1}{n}+\e)\Delta+n\e K_V\big|=\emptyset.
$$
\end{enumerate}
\end{ex}
As mentioned in the introduction, finding such a Fano variety in low dimension seems a hard task. We could not find any such examples in dimension 3 or less. The example we found is in dimension 4. We will discuss this later. Now we will show why the above hypothesis are useful.
\begin{prop}\label{ex:cone/Fano} Let $V$, $\Delta$ and $n$ as in \ref{ex:Fano}, and let $L:=-nK_V-\Delta$, which is very ample. Let $X$ be the cone over $(V,L)$, and let $f:Y\rightarrow X$ be the blow-up of the vertex. Then $K^-_{Y/X}>-1$, and for $1||m$, for each $\Delta_m$ such that $\Delta_m$ is a cone over a divisor in $V$ and $K^-_{m,Y/X}=K^{\Delta_m}_{Y/X}$, $f$ is not a resolution of the pair $(X,\Delta_m)$.
\end{prop}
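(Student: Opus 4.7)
The two assertions will be handled separately, each relying on the cone calculus. The bound $K^-_{Y/X} > -1$ follows from Lemma \ref{lm:stefano3.3.3} by reading off the infimum $t$ from conditions (1) and (3); the second assertion is where the $m$-compatibility rigidity, together with condition (2), does the work.

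Since $E$ is the only $f$-exceptional divisor, it suffices to show $\ord_E(K^-_{Y/X}) > -1$. By Lemma \ref{lm:stefano3.3.3}, $\ord_E(K^-_{Y/X}) = -(1+t)$, where $t = \inf\{s \in \Q : sL - K_V \sim_\Q \textrm{effective}\}$. Substituting $L = -nK_V - \Delta$ gives $sL - K_V = (sn+1)(-K_V) + (-s)\Delta$. At $s = -1/n$ this reduces to $(1/n)\Delta \geq 0$, so $t \leq -1/n$. For $s < -1/n$, setting $\e = -s - 1/n > 0$, one has $sL - K_V \sim_\Q (1/n + \e)\Delta + n\e K_V$, which by condition (3) has no effective representative. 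Hence $t = -1/n$ and $\ord_E(K^-_{Y/X}) = -(n-1)/n > -1$.

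For the second assertion, fix $\Delta_m = C_{\Delta'_m}$ with $K^-_{m,Y/X} = K^{\Delta_m}_{Y/X}$. Comparing Remark \ref{rk:dFH 3.9} with the defining formula for $K^{\Delta_m}_{Y/X}$ shows that this equality is equivalent to $\Delta_m$ being a weak $m$-compatible $K_X$-boundary with respect to $f$, i.e., $\frac{1}{m}f^\natural(mK_X) = f^*(K_X + \Delta_m) - f^{-1}_*\Delta_m$ with $K_X + \Delta_m$ being $\Q$-Cartier. Since $K_X \sim C_{K_V - L}$, the $\Q$-Cartier requirement forces $\Delta'_m \sim_\Q K_V + (\alpha+1)L$ for some $\alpha \in \Q$. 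Taking the $E$-order of both sides of the compatibility identity and using the cone computation underlying Lemma \ref{lm:stefano3.3.3} (i.e., Claim \ref{claim:pullback of divisor}) then forces $\alpha$ to agree with the value of $t$ obtained in Part 1; concretely, $\Delta'_m \sim_\Q (1/n)\Delta$. For $m$ sufficiently divisible (in particular $n \mid m$, which is the meaning of $1 \mid\mid m$ here), $m\Delta'_m$ thus belongs to the linear system $|(m/n)\Delta|$. By condition (2), every element of this linear system has worse-than-SNC singularities; identifying $E \cong V$ and observing that the strict transform $f^{-1}_*\Delta_m$ meets $E$ precisely along $\Delta'_m$, the divisor $f^{-1}_*\Delta_m \cup E$ fails to be SNC at the singularities of $\Delta'_m$, and $f$ is not a log resolution of $(X, \Delta_m)$.

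The main obstacle is the cone-compatibility computation in Part 2: deducing $\Delta'_m \sim_\Q (1/n)\Delta$ from the $m$-compatibility identity requires a careful analysis of $\ord_E(f^\natural(mK_X))$, essentially an $\natural$-pullback version of Lemma \ref{lm:stefano3.3.3}. Once that is settled, the two conclusions fall out of a direct combination of conditions (2), (3), and the standard cone picture.
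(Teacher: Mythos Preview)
Your proof is correct and follows essentially the same route as the paper's. Both arguments compute $t=-1/n$ via Lemma~\ref{lm:stefano3.3.3} and condition~(c), then identify the $\Q$-linear class of any cone boundary $\Gamma_m$ realising $K^-_{m,Y/X}=K^{\Delta_m}_{Y/X}$ as $(1/n)\Delta$, and finish with condition~(b).

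The one presentational difference worth noting: the paper inserts an explicit intermediate step you leave implicit. Before analysing an arbitrary $\Delta_m$, the paper observes that at $s=-1/n$ one has $sL-K_V=(1/n)\Delta$, so $C_{(1/n)\Delta}$ is itself a boundary with $m(K_X+C_{(1/n)\Delta})$ Cartier for $n\mid m$; this immediately gives $K^-_{m,Y/X}=K^-_{Y/X}$ for $1\mid\mid m$. That stabilisation is exactly what makes your claim ``$\alpha$ agrees with the value of $t$'' go through: without it, taking $E$-orders in the compatibility identity only yields $\alpha=t_m:=\ord_E\big(\tfrac{1}{m}f^\natural(mK_X)\big)$, and one still needs $t_m=t$. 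You acknowledge this in your ``main obstacle'' paragraph, but the paper's device of exhibiting $(1/n)\Delta$ settles it in one line and avoids the ``$\natural$-pullback version of Lemma~\ref{lm:stefano3.3.3}'' you anticipate needing. Also, a small sign slip: the $\Q$-Cartier condition gives $\Delta'_m\sim_\Q(\alpha+1)L-K_V$, not $+K_V$.
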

\begin{cor} If $X$ is as in the above proposition, although $K^-_{Y/X}>-1$, we can not say what are the singularities of the variety $X$.
\end{cor}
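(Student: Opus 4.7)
The plan is to treat the two assertions in turn. For the bound $K^-_{Y/X}>-1$, I would apply Lemma \ref{lm:stefano3.3.3} with $L=-nK_V-\Delta$, which gives $\ord_E(K^-_{Y/X})=-(1+t)$ where
$$t=\inf\{s\in\Q\mid sL-K_V\sim_\Q\text{effective}\}.$$
Substituting, $sL-K_V=-(sn+1)K_V-s\Delta$, which at $s=-1/n$ equals $\tfrac{1}{n}\Delta\ge 0$, so $t\le -1/n$. For $s=-1/n-\varepsilon$ with $\varepsilon>0$ the expression becomes $n\varepsilon K_V+(\tfrac{1}{n}+\varepsilon)\Delta$, whose $\Q$-linear system is empty by hypothesis (3); hence $t=-1/n$ and $\ord_E(K^-_{Y/X})=-1+1/n>-1$. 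Since $E$ is the only exceptional divisor of $f$, this suffices to conclude $K^-_{Y/X}>-1$.

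For the second assertion I would argue by contradiction, assuming that $\Delta_m=C_{B_m}$ is a cone over a divisor $B_m$ on $V$, that $K^-_{m,Y/X}=K^{\Delta_m}_{Y/X}$, and that $f$ is a log resolution of $(X,\Delta_m)$. The first equality forces $\Delta_m$ to be a weak $m$-compatible $K_X$-boundary, so that $K_X+\Delta_m\sim C_{K_V-L+B_m}$ is $\Q$-Cartier; Koll\'ar's criterion for cones then yields $B_m\sim_\Q(r+1)L-K_V$ for some $r\in\Q$. A computation analogous to the proof of Lemma \ref{lm:stefano3.3.3}, applied now to $K^{\Delta_m}_{Y/X}$, gives $\ord_E(K^{\Delta_m}_{Y/X})=-(1+r)$. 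By \eqref{eq:K-<=K^+}, $K^-_{m,Y/X}$ is squeezed between $K^{\Delta_m}_{Y/X}$ and $K^-_{Y/X}$; for $m$ divisible enough this pins $r=-1/n$ and hence
$$B_m\sim_\Q\Bigl(1-\tfrac{1}{n}\Bigr)L-K_V=L+\tfrac{1}{n}\Delta,\qquad nB_m\sim_\Q nL+\Delta.$$

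The decisive step will be to upgrade this $\Q$-linear equivalence to a statement about $B_m$ as an integral divisor, namely that its support contains an element of $|m'\Delta|$ for some $m'\ge 1$. To do this I would unpack condition (4) in the definition of weak $m$-compatible boundary along $E$, writing $f^\natural(mK_X)$ in terms of the graded pieces $\bigoplus_j H^0(V,mK_V+jL)$ of $\regsh_X(mK_X)$; hypothesis (3), applied at the boundary graded piece where $mK_V+jL$ sits at the edge of effectivity, forces the fixed part of the relevant linear system on $V$ to be a positive multiple of $\Delta$, and this multiple appears in the support of $B_m$. Once that containment is in hand, the log-resolution hypothesis on $f$ would make $\exc(f)\cup f^{-1}_*\Delta_m$ snc, and in particular its restriction $B_m\subset E\cong V$ snc; but $B_m$ containing an element of $|m'\Delta|$ (which is not snc by hypothesis (2)) cannot itself be snc. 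The main obstacle is precisely this upgrade: ruling out that the $\Delta$-fixed part can be absorbed into a movable $|L|$-part so that $B_m$ is itself smoothable.
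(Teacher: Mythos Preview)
Your treatment of $K^-_{Y/X}>-1$ follows the paper's line via Lemma~\ref{lm:stefano3.3.3} and is fine.

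The second part contains a computational slip that manufactures the very obstacle you flag at the end. Your formula $\ord_E(K^{\Delta_m}_{Y/X})=-(1+r)$ is off by one. By adjunction on $E$ one has $K_V=(K_Y+E)\big|_E$; writing $K_Y+E=f^*(K_X+C_{B_m})-f^{-1}_*C_{B_m}+(a+1)E$ with $a=\ord_E K^{\Delta_m}_{Y/X}$ and restricting to $E$ gives $K_V=-B_m-(a+1)L$, hence if $K_V+B_m\sim_\Q(r+1)L$ then $a=-(2+r)$. (You can check this against Proposition~\ref{prop:limiting can sings}: there $\Gamma=f_1$, $K_Z-L+f_1=-2L$, so your $r=-2$, and the paper obtains $K^{\Delta}_{Y/X}=0$, whereas your formula would give $1$.) With the corrected exponent, matching $-(2+r)=-1+\tfrac1n$ yields $r=-1-\tfrac1n$, and therefore
\[
B_m\;\sim_\Q\;\Bigl(-\tfrac1n\Bigr)L-K_V\;=\;\tfrac1n\Delta,
\]
not $L+\tfrac1n\Delta$.

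This is exactly the paper's argument: any cone boundary computing $K^-_{m,Y/X}$ is $\Q$-linearly equivalent to a positive rational multiple of $\Delta$, so an integral multiple of $B_m$ lies in some $|m'\Delta|$. Condition~(b) then forces its support to be non-snc, hence $\mathrm{exc}(f)\cup f^{-1}_*C_{B_m}$ is not snc and $f$ is not a log resolution of $(X,C_{B_m})$. Once the arithmetic is fixed there is no ``decisive step'' left and no need to extract a fixed $\Delta$-part from a graded linear series; the $\Q$-linear class of $B_m$ is already a multiple of $\Delta$, so condition~(b) applies directly.
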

\begin{proof} Notice that condition (a) is used to guarantee the existence of a positive $n$ such that $-nK_V-\Delta$ is very ample.

As in \ref{lm:stefano3.3.3}, $K^-_{Y/X}=(-1-t)E$, where $t=\inf\{s\,|\,sL-K_V\sim_\Q\Gamma\geq0\}$. Condition (c) implies that $t=-\frac{1}{n}$, so that $K^-_{Y/X}=(-1+\frac{1}{n})E>-E$.

If we consider $s=-\frac{1}{n}$, then $sL-K_V=\frac{1}{n}\Delta$, so that, for $1||m$, $m(K_V+\frac{1}{n}\Delta)$ is Cartier, and the cone over $\Delta$ is a boundary. Thus, for $1||m$, $K^-_{m,Y/X}=K^-_{Y/X}$. Let $C_{\Gamma_m}$ be an effective boundary computing the discrepancy, and a cone over a divisor $\Gamma_m$ in $V$. The divisor $\Gamma_m$ will have to be $\Q$-linearly equivalent to $\Delta$ (or to some multiple), and thus a singular non-nc divisor, by condition (b). But then the blow-up of the vertex is not a log resolution of the pair $(X,C_{\Gamma_m})$.
\end{proof}

\vspace{1ex}

Now we will discuss of how to realize such a Fano variety (and such $\Delta$). We will give two examples. The first one is a simplified version of the second one, which is of \cite{AW}.
\begin{ex} Let $V_1\rightarrow\Pspace^4$ be the blow-up of a point. Let $S_0\cong\Pspace^3$ be the exceptional divisor. Notice that $V_1$ has a natural structure of $\Pspace^1$-bundle over $\Pspace^3$, $V_1\cong\Pspace(\regsh\oplus\regsh(1))$. Let $H$ be the pullback on $V_1$ of an hyperplane section in $\Pspace^3$. Then $L=2H+S_0$ is base point free. Let $B\in|2L|$ be a smooth general member, and let $V\rightarrow V_1$ be the double cover ramified over $B$. There is a natural map $V\rightarrow\Pspace^4$, which admits Stein factorization $V\rightarrow Z\rightarrow\Pspace^4$. The first map in the Stein factorization is a divisorial contraction, while the second map is generically $2:1$. Let $E$ be the exceptional divisor of $V\rightarrow Z$. 

It is just a computation to check that $V$ is a smooth Fano variety. Explicitely, $K_V$ is the pullback of $-3H-2S_0$.

We claim that we can choose a smooth $B$ such that $E$ is singular. Indeed, $E$ is the double cover of $S_0$ ramified over $B\cap S_0$. The divisor $B\cap S_0\in|\regsh_{\Pspace^3}(2)|$, and we can choose it to be a degree $2$ cone of equations $x_0^2+x_1^2=x_2^2$. By \cite{AW}, 3.5.2, this can be extended to a smooth divisor $B$ on $V$. By direct computation, as in \cite{EV}, 3.13, we see that $E$ is still singular (with equation $t^2+x_2^2=x_0^2+x_1^2$).

Since $E$ is an exceptional divisor, for any $m\geq0$, $|mE|=\{mE\}$.

Condition of (c) is the last thing to be verified. This again can be directly checked. We have that $L=-nK_V-E$ is very ample for $n\geq2$. For the last condition of (c), the key point to notice that, if $D\sim_\Q-(\frac{1}{n}+\e)E+n\e K_V$, then $f_*D=-\alpha S_0-\beta H$, with $\alpha,\beta>0$, and thus $f_*D$ can not be effective. But then $D$ cannot be effective.

In conclusion, $V$ and $\Delta:=E$ satisfy the conditions of \ref{ex:Fano}.
\end{ex}
\begin{ex}[\cite{AW}, 3.5.4] Let $V_1\rightarrow\Pspace^3$ be the blow-up of $\Pspace^3$ at one point, and let $S_0$ be the exceptional divisor of the blow-up. The variety $V_1$ has a $\Pspace^1$-bundle structure $V_1=\Pspace(\regsh\oplus\regsh(1))\rightarrow\Pspace^2$; let $H$ be the pullback via this map of the line. Let $Y$ be the product $V_1\times\Pspace^1$, with projections $p_1$ and $p_2$. Let $L:=p_1^*(S_0+2H)$, and let $B$ be a smooth divisor in $|2L|$. As above, we can consider the $2:1$ cover $V\rightarrow Y$, ramified over $B$, and consequently the Stein factorization $V\rightarrow Z\rightarrow\Pspace^3\times\Pspace^1$. The morphism $V\rightarrow Z$ is a divisorial contraction of a divisor $E$ which is mapped to $\Pspace^1$. From the theory of $3$-dimensional good contractions, \cite{mori}, we know that the generic fiber of $E\rightarrow\Pspace^1$ is either a smooth quadric or a quadric cone. In the latter case, with a similar computation to the one above, it can be checked that the pair $(V,\Delta:=E)$ satisfies the conditions of \ref{ex:Fano}.
\end{ex}

\section{The meaning of restrictions}
\begin{thm}\label{thm:pullback and restriction, cones} Let $X$ be a (projective) cone over a smooth projective variety, and let us assume that $X$ is normal; let $f:Y\rightarrow X$ be the blow-up of the vertex, and let $E$ be the exceptional divisor of $f$. Let $D$ be a Weil divisor on $X$. Then $f^*(D)\big|_E\sim_\R0$ if and only if $D$ is $\Q$-Cartier.
\end{thm}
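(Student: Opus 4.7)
The plan is to prove the two implications separately. The forward direction is immediate from the usual behavior of Cartier pullbacks, while the converse requires the structure of the cone together with an explicit formula for the pullback of a cone divisor.

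For the forward direction, if $D$ is $\Q$-Cartier with $mD$ Cartier, the Proposition of Section~2 gives $f^*(D)=\tfrac{1}{m}f^*(mD)$ coinciding with the usual pullback, and $\regsh_Y(mf^*(D))=f^*\regsh_X(mD)$. Since $f$ sends $E$ to the vertex $v$, the restriction $\regsh_Y(mf^*(D))\big|_E$ is the pullback of the stalk of $\regsh_X(mD)$ at $v$, hence trivial. Thus $mf^*(D)\big|_E\sim 0$, so $f^*(D)\big|_E\sim_\Q 0$, and in particular $\sim_\R 0$.

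For the converse, I would first reduce to $D$ being a cone. Let $V_\infty\subset X$ be the Cartier hyperplane section disjoint from $v$; under $V_\infty\cong V$ it represents the class of $L$. Using that $\Cl(X\setminus V_\infty)\cong\Cl(V)/\Z[L]$, with every class represented by a cone, every Weil divisor on $X$ is linearly equivalent to $C_\Delta+nV_\infty$ for some divisor $\Delta$ on $V$ and some $n\in\Z$. By the forward direction, the Cartier summand $nV_\infty$ drops out of both the hypothesis and the conclusion, so I may assume $D=C_\Delta$. The core technical step is then to establish the formula
$$
f^*(C_\Delta)=\pi^*\Delta+tE,\qquad t=\inf\{s\in\Q\mid sL-\Delta\sim_\Q\text{effective on }V\},
$$
where $\pi\colon Y\to V$ is the $\Pspace^1$-bundle structure on $Y$. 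Lemma~\ref{lm:stefano3.3.3} is the case $\Delta=K_V-L$, and the general case follows by unwinding the $\natural$-pullback against the graded section ring of $(V,L)$: the sheaf $\regsh_X(-mC_\Delta)$ is controlled by $\bigoplus_k H^0(V,kL-m\Delta)$, giving $\ord_E f^\natural(mC_\Delta)=\min\{k:H^0(V,kL-m\Delta)\ne 0\}$, and dividing by $m$ and taking the limit recovers $t$. I expect this limiting computation to be the main obstacle, with everything else being essentially formal.

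The conclusion is then short. Under $E\cong V$ we have $\pi^*\Delta\big|_E=\Delta$ and $E\big|_E\sim -L$, so the formula yields $f^*(C_\Delta)\big|_E\sim_\R\Delta-tL$. Assuming this vanishes in $\R$-linear equivalence, $\Delta\sim_\R tL$; using that $\sim_\R 0$ implies numerically $\equiv 0$ and pairing with $[H]^{n-1}$ for $H$ ample on $V$ forces $t=(\Delta\cdot H^{n-1})/(L\cdot H^{n-1})\in\Q$. Then $\Delta-tL$ is a $\Q$-divisor that is $\R$-linearly trivial, and since a $\Q$-linear system admitting an $\R$-solution admits a $\Q$-solution, $\Delta\sim_\Q tL$. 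Taking cones, $C_\Delta\sim_\Q tC_L\sim_\Q tV_\infty$, which is $\Q$-Cartier, as required.
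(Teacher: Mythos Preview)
Your proposal is correct and follows the same overall architecture as the paper: reduce to the case $D=C_\Delta$ a cone divisor, establish the formula $f^*(C_\Delta)=f^{-1}_*C_\Delta+tE$ with $t=\inf\{s: sL-\Delta\sim_\Q\text{effective}\}$, then restrict to $E$ to get $f^*(C_\Delta)\big|_E\sim_\R \Delta-tL$ and read off the equivalence with $\Q$-Cartierness.

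The one genuine difference is in how the key formula is obtained. The paper proves it (Claim~\ref{claim:pullback of divisor}) via Corollary~\ref{cor:pullback and compatible $D$-boundary}, expressing $f^*(C_\Delta)$ as an infimum over compatible boundaries and showing that boundaries of cone type already realize the infimum. You instead propose a direct computation through the graded section ring: identifying $\regsh_X(-mC_\Delta)$ on the affine cone with $\bigoplus_k H^0(V,kL-m\Delta)$ gives $\ord_E f^\natural(mC_\Delta)=\min\{k:H^0(V,kL-m\Delta)\ne 0\}$, and the limit recovers $t$. Both arguments are valid; yours is more self-contained and avoids invoking the boundary machinery, while the paper's approach makes the link with the general theory of compatible boundaries explicit (which it then reuses in Corollary~\ref{cor:pullback and restriction, cones}). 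Your final step, extracting $t\in\Q$ by intersecting $\Delta\sim_\R tL$ against $H^{n-1}$, is also a bit more careful than the paper's one-line appeal to integrality.
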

\begin{proof} Let us fix the notation. Let $V$ be a smooth variety, let $L$ be a very ample line bundle on $V$ giving an embedding $V\hookrightarrow\Pspace^n$, and let $X\subseteq\Pspace^{n+1}$ be the projective cone over it. We are assuming that $X$ is normal. We have that $V\subseteq X$ as section at infinity.

Let $D$ be a Weil divisor in $X$. Since $X$ is a projective cone, by \cite{har}, exercise II.6.3, $D\sim C_{D|_V}$ the cone over the restriction of $D$ to $V$. The pullback of \cite{dFH} coincides with the usual pullback for Cartier divisors, thus it preserves linear equivalence. Hence $f^*(D)\sim_\Q f^*(C_{D|_V})$, and $f^*(D)\big|_E\sim_\R f^*(C_{D|_V})\big|_E$. Since the property of being $\R$-Cartier is also preserved by linear equivalence, it is enough to show that, if $D$ is a divisor on $V$, $f^*(C_D)\big|_E\sim_\R0$ if and only if $D\sim_\Q rL$, for some $r\in\Q$ (which is the only case in which $C_D$ can be $\Q$-Cartier, by \cite{kol}, 70.(1)).

\begin{claim}\label{claim:pullback of divisor} Let $D$ be a divisor on $V$ and let $C_D$ be the cone over it in $X$. Then
\begin{eqnarray}\label{eq:pullback of divisor}
f^*(C_D)=f^{-1}_*C_D+tE,
\end{eqnarray}
where
$$
t=\inf\{s\,|\,sL-D\sim_\Q\Delta,\lfloor\Delta\rfloor=0\}.
$$
\end{claim}
\begin{proof}[Proof of claim \ref{claim:pullback of divisor}.]
Let $D$ be a divisor on $V$ and let $C_D$ be the cone over it in $X$. Then
\begin{eqnarray*}
f^*(C_D)&=&\inf\big\{(f^*(C_D+\Gamma)-f^{-1}_*\Gamma)\,\big|\,\textrm{$C_D+\Gamma$ is $\Q$-Cartier, $\Gamma\geq0$, $\lfloor\Gamma\rfloor=0$}\big\}\leq\\
&\leq&\inf\big\{(f^*(C_D+C_\Delta)-f^{-1}_*C_\Delta)\,\big|\,\textrm{$C_D+C_\Delta$ is $\Q$-Cartier},\\
&&\hspace{20em}C_\Delta\geq0,\,\lfloor C_\Delta\rfloor=0\big\}.
\end{eqnarray*}
Let $C_\Delta$ be as above. Then $\Delta\geq0$ and $\lfloor\Delta\rfloor=0$. Since $C_D+C_\Delta$ is $\Q$-Cartier, there exists $s\in\Q$ such that $\Delta\sim_\Q sL-D$. If $\Delta\sim_\Q sL-D$, then
\begin{eqnarray*}
f^*(C_D+C_\Delta)-f^{-1}_*C_\Delta&\sim_\Q&f^*(C_{sL})-f^{-1}_*C_{sL-D}=f^{-1}_*C_{sL}+sE-f^{-1}_*C_{sL-D}=\\
&=&f^{-1}_*C_D+sE.
\end{eqnarray*}
Therefore,
$$
\val_E(f^*(C_D))\leq\inf\{s\,|\,sL-D\sim_\Q\Delta,\Delta\geq0\}.
$$
Let now $\Gamma$ be a divisor on $X$ with $\Gamma\geq0$, $C_D+\Gamma$ $\Q$-Cartier,  and
$$
f^*(C_D+\Gamma)-f^{-1}_*\Gamma=f^{-1}_*C_D+kE,
$$
with $k\in\Q$. Let $\Delta=f^{-1}_*\Gamma\big|_E$, which we can think of as a divisor on $V$, since $E\cong V$. Then, $\Delta\geq0$ and
$$
\Delta=f^{-1}_*\Gamma\big|_E\sim_\Q(-kE-f^{-1}_*C_D)\big|_E\sim_\Q kL-D,
$$
which means that $C_D+C_\Delta$ is $\Q$-Cartier. With the same computation of above,
$$
f^*(C_D+C_\Delta)-f^{-1}_*C_\Delta=f^{-1}_*C_D+kE=f^*(C_D+\Gamma)-f^{-1}_*\Gamma.
$$
Therefore, if
$$
t=\inf\{s\,|\,sL-D\sim_\Q\Delta,\lfloor\Delta\rfloor=0\},
$$
then
$$
\val_E(f^*(C_D))=t,
$$
which means that
$$
f^*(C_D)=f^{-1}_*C_D+tE.
$$
\end{proof}
Restricting the above identity to $E$, we find
\begin{eqnarray}\label{eq:pullback of divisor and restriction}
f^*(C_D)\big|_E\sim_\R-tL+D=-(tL-D),
\end{eqnarray}
which is $0$ if and only if $D\sim_\R tL$. Since $D$ and $L$ are a integral Weil divisors, it means that $t\in\Q$, and that the condition is equivalent with $C_D$ being $\Q$-Cartier.
\end{proof}
The next result is similar to \cite{BdFF}, 2.29.
\begin{cor}\label{cor:pullback and restriction, cones} Let $X$ be a (projective) cone over a smooth projective polarized variety $(V,L)$, and let us assume that $X$ is normal; let $f:Y\rightarrow X$ be the blow-up of the vertex, with exceptional divisor $E$. Let $D$ be a Weil divisor on $X$. The following are equivalent
\begin{enumerate}
\item $f^*(-D)=-f^*(D)$ (as $\R$-Weil divisors);
\item $f^*(D)\big|_E\equiv0$;
\item $D\sim_\Q C_\Delta$ is the cone over a divisor $\Delta$ on $V$ such that $\Delta\equiv rL$, for some $r\in\Q$.
\end{enumerate}
In particular, in any of the above cases, $f^*(D)$ is a $\Q$-divisor.
\end{cor}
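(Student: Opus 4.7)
The plan is to reduce to cone divisors and then exploit Claim \ref{claim:pullback of divisor}. Every Weil divisor on $X$ satisfies $D \sim C_{D_0}$ with $D_0 := D|_V$ by \cite{har}, Exercise II.6.3, and the pullback respects linear equivalence via \eqref{eq:pullbacklinear}; hence I may assume $D = C_{D_0}$. Claim \ref{claim:pullback of divisor} then yields
\begin{equation*}
f^*(C_{D_0}) = f^{-1}_*C_{D_0} + t(D_0)\,E,\quad t(D_0) := \inf\bigl\{s\in\R : sL - D_0 \sim_\Q \Delta \geq 0,\ \lfloor\Delta\rfloor = 0\bigr\},
\end{equation*}
and restricting to $E \cong V$ with $E|_E \sim -L$ gives $f^*(C_{D_0})\big|_E \sim_\R D_0 - t(D_0)\,L$.

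The central preliminary is the identity $t(D_0) = r$ whenever $D_0 \equiv rL$ with $r \in \Q$. Indeed, $sL - D_0 \sim_\Q \Delta \geq 0$ implies that $(s-r)L \equiv \Delta$ is pseudoeffective, which forces $s \geq r$ since $L$ is ample; conversely, for any rational $s > r$ the $\Q$-Cartier divisor $sL - D_0$ on the smooth $V$ is numerically equivalent to the ample $(s-r)L$, hence is itself ample, and for $m$ large a general reduced member $B \in |m(sL - D_0)|$ yields $\Delta := B/m$ with $\lfloor\Delta\rfloor = 0$, showing $t(D_0) \leq s$.

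With this identity, (c) $\Rightarrow$ (a), (b) is immediate: $t(\pm D_0) = \pm r$, hence $f^*(D) + f^*(-D) = (t(D_0) + t(-D_0))\,E = 0$ and $f^*(D)\big|_E \sim_\R D_0 - rL \equiv 0$. Conversely (b) $\Rightarrow$ (c) reads directly off the restriction formula: $D_0 \equiv t(D_0)\,L$ on $V$, and intersecting with $L^{n-1}$ (where $n = \dim V$) yields $t(D_0) = (D_0 \cdot L^{n-1})/L^n \in \Q$, so (c) holds with $\Delta = D_0$, $r = t(D_0)$.

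The main content is (a) $\Rightarrow$ (c). Condition (a) forces $t(D_0) + t(-D_0) = 0$. For every $\epsilon > 0$ I approximate the infima: choose $s_1 < t(D_0) + \epsilon$ with $s_1 L - D_0 \sim_\Q \Delta_1 \geq 0$ and $s_2 < t(-D_0) + \epsilon$ with $s_2 L + D_0 \sim_\Q \Delta_2 \geq 0$. Letting $\epsilon \to 0$, the numerical classes of $t(D_0) L - D_0$ and of $-(t(D_0) L - D_0)$ are limits of pseudoeffective classes, hence both lie in the closed pseudoeffective cone of the smooth projective $V$; strong convexity of that cone forces $D_0 \equiv t(D_0) L$, with $t(D_0) \in \Q$ as above. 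Thus (c) holds, and in each of the three equivalent cases the formula for $f^*(D)$ has a rational coefficient on $E$, so $f^*(D)$ is a $\Q$-divisor. The only nontrivial external input is the strong convexity of the pseudoeffective cone, used to upgrade two-sided numerical pseudoeffectivity to numerical equivalence; everything else is direct bookkeeping with Claim \ref{claim:pullback of divisor}.
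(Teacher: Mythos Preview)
Your argument is correct and rests on the same two ingredients as the paper: the explicit formula of Claim~\ref{claim:pullback of divisor} and the fact that the pseudoeffective cone of a smooth projective variety is pointed. The organization differs only superficially. The paper proves the cycle (a)$\Rightarrow$(b)$\Rightarrow$(c)$\Rightarrow$(a), invoking Corollary~\ref{cor:pullback and restriction} for (a)$\Rightarrow$(b) and isolating the special case $r=0$ of your ``central preliminary'' as Claim~\ref{claim:pullback of torsion} for (c)$\Rightarrow$(a). You instead prove (c)$\Rightarrow$(a),(b) and (b)$\Rightarrow$(c) from the preliminary, and handle (a)$\Rightarrow$(c) directly from the formula. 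Your (a)$\Rightarrow$(c) step and the paper's (a)$\Rightarrow$(b) step are the same argument viewed on $V$ versus on $E\cong V$: in both cases one shows that $t(D_0)L-D_0$ and its negative are limits of effective classes and concludes numerical triviality. You make the strict convexity of $\overline{\mathrm{Eff}}(V)$ explicit, whereas the paper's sentence ``Thus, it must be numerically trivial'' leaves that step implicit; in that sense your write-up is slightly more transparent, but there is no genuinely new idea on either side.
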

\begin{proof} Notice that, if $\Delta=D\big|_V$, then $D\sim C_\Delta$ (by by \cite{har}, exercise II.6.3). Since all the conditions in the above statement are invariants under $\Q$-linear equivalence, we can directly assume that $D=C_\Delta$, with $\Delta$ a divisor on $V$.
\begin{itemize}
\item[\desc{(a)$\Rightarrow$(b)}] By corollary \ref{cor:pullback and restriction}, $f^*(D)\big|_E$ is numerically antieffective. On the other hand, $f^*(D)\big|_E\equiv -f^*(-D)\big|_E$ is numerically effective. Thus, it must be numerically trivial.
\item[\desc{(b)$\Rightarrow$(c)}] Let $D=C_\Delta$ and $f^*(D)=f^{-1}_*C_\Delta+tE$. Then, $0\equiv f^*(D)\big|_E\equiv\Delta-tL$. This means that $\Delta\equiv tL$. Since they are both divisors, $t\in\Q$.
\item[\desc{(c)$\Rightarrow$(a)}] Let $D=C_\Delta$ and $\Delta\equiv rL$, with $r\in\Q$. It is enough to prove that
\begin{eqnarray}\label{eq:numerical equivalence and pullback}
f^*(D)=f^{-1}_*D+rE
\end{eqnarray}
(the same $r$!). If this is true, then $-\Delta\equiv-rL$ and
$$
f^*(-D)=f^{-1}_*(-D)-rE=-f^{-1}_*D-rE=-(f^{-1}_*D+rE)=-f^*(D).
$$
Notice that the formula \eqref{eq:numerical equivalence and pullback} shows that, in this case, $f^*(D)$ is a $\Q$-divisor. In order to prove \eqref{eq:numerical equivalence and pullback}, we can show that, if $\Delta\equiv0$, then
$$
f^*(C_\Delta)=f^{-1}_*C_\Delta.
$$
Let $\Delta\equiv rL$ and $T=\Delta-rL\equiv 0$, and let us assume that $f^*(C_T)=f^{-1}_*C_T$, as claimed above. Then
$$
f^*(C_\Delta)=f^*(C_{T+rL})=f^*(C_T+C_{rL})=f^*(C_T)+f^*(C_{rL}),
$$
since $C_{rL}$ is $\Q$-Cartier. In turn,
$$
f^*(C_T)+f^*(C_{rL})=f^{-1}_*C_T+f^{-1}_*(C_{rL})+rL=f^{-1}_*C_\Delta+rL,
$$
as desired (the strict transform is linear). It remains to show the following claim.
\begin{claim}\label{claim:pullback of torsion} Let $T\equiv0$. Then $f^*(C_T)=f^{-1}_*C_T$. 
\end{claim}
\begin{proof}[Proof of claim \ref{claim:pullback of torsion}.] By claim \ref{claim:pullback of divisor}, $f^*(C_T)=f^{-1}_*(C_T)+tE$ with
$$
t=\inf\{s\,|\,sL-T\sim_\Q\Delta,\lfloor\Delta\rfloor=0\}.
$$
For each $s\in\Q$, $s>0$, since $L$ is very ample and $T\equiv 0$, $sL-T\equiv sL$ is ample. Therefore $sL-T\sim_Q\Delta$, $\Delta\geq0$. On the other hand, for each $s\in\Q$, $s<0$, $sL-T\equiv sL$ is antiample, and thus, $sL-T\sim_\Q\Delta$, $\Delta\leq0$. Therefore,
$$
t=\inf\{s\,|\,sL-T\sim_\Q\Delta,\Delta\geq=0\}=0.
$$
\end{proof}
\end{itemize}
\end{proof}
\begin{rk} The claim \ref{claim:pullback of torsion} does not give a necessary condition, as example \ref{con:example} shows. With the notation of that example, we have a divisor $T=-2f_2$ numerically non-trivial such that $f^*(C_T)=f^{-1}_*C_T$. This can be easily checked by direct computation.
\end{rk}
The previous corollary and \cite{BdFF}, 2.29 give
\begin{cor}\label{cor:pullback of numerically Cartier} Let $X$ be a (projective) cone over a smooth projective polarized variety $(V,L)$, and let us assume that $X$ is normal; let $f:Y\rightarrow X$ be the blow-up of the vertex. Let $D$ be a Weil divisor on $X$. Then $f^*(-D)=-f^*(D)$ if and only if $D$ is numerically Cartier, i.e., $g^*(-D)=-g^*(D)$ for any proper birational morphism of normal varieties $g:Z\rightarrow X$.
\end{cor}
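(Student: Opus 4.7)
The plan is to deduce this corollary by assembling two results: Corollary \ref{cor:pullback and restriction, cones}, which characterizes the equality $f^*(-D)=-f^*(D)$ for the blow-up of the vertex in terms of the structure of $D$ as a cone, and \cite{BdFF}, 2.29, which characterizes numerically Cartier divisors in a parallel way. The proof should be very short, essentially a matter of checking that the two characterizations overlap exactly.

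First I would dispose of the easy direction: if $D$ is numerically Cartier, then by definition $g^*(-D)=-g^*(D)$ for \emph{every} proper birational morphism $g:Z\rightarrow X$ from a normal variety. Taking $g=f$ (the blow-up of the vertex) gives $f^*(-D)=-f^*(D)$ immediately.

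For the converse, assume $f^*(-D)=-f^*(D)$. Applying the implication (a)$\Rightarrow$(c) of Corollary \ref{cor:pullback and restriction, cones}, we learn that $D\sim_\Q C_\Delta$, where $\Delta$ is a divisor on $V$ with $\Delta\equiv rL$ for some $r\in\Q$. Now I would invoke \cite{BdFF}, 2.29, whose content (in the setup of a projective cone) is precisely a characterization of numerical Cartierness in terms of the cone construction: a divisor of the form $C_\Delta$ with $\Delta$ numerically proportional to $L$ is numerically Cartier. Combined with the fact that numerical Cartierness is preserved under $\Q$-linear equivalence (principal divisors are Cartier, so their pullback is honestly linear, and the identity $g^*(-D)=-g^*(D)$ transports through any principal correction), we conclude that $D$ is numerically Cartier.

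The only potential obstacle is ensuring the match between the statements of Corollary \ref{cor:pullback and restriction, cones}(c) and \cite{BdFF}, 2.29; but since the author explicitly notes that the former is ``similar to'' the latter, the dovetailing is essentially built in, and no further geometric argument is needed. The entire proof therefore amounts to one forward appeal to the previous corollary and one forward appeal to \cite{BdFF}, 2.29, with the trivial direction handled by unwinding the definition of numerically Cartier.
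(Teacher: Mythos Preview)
Your proposal is correct and matches the paper's approach exactly: the paper simply states that the corollary follows from ``the previous corollary and \cite{BdFF}, 2.29'' without spelling out any details, and you have filled in precisely the expected two-line argument (trivial direction by specializing $g=f$; nontrivial direction via (a)$\Rightarrow$(c) of Corollary~\ref{cor:pullback and restriction, cones} followed by \cite{BdFF}, 2.29).
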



\end{document}